\documentclass[10pt]{amsart}

\pdfoutput=1

\usepackage{amsmath,amssymb,graphicx,bbm}
\usepackage{amsthm,verbatim,xcolor}
\usepackage{mathrsfs}

\usepackage[norelsize,lined,boxed,linesnumbered,commentsnumbered]{algorithm2e}

\usepackage[footnotesize,bf]{caption}
\usepackage[left=1.25in,right=1.25in,top=1in]{geometry}

\usepackage{acncommands}

\usepackage{array,multirow,booktabs} 
\theoremstyle{plain}
\newtheorem*{mythm}{Theorem}

\newcommand{\vertiii}[1]{{\left\vert\kern-0.25ex\left\vert\kern-0.25ex\left\vert #1 
    \right\vert\kern-0.25ex\right\vert\kern-0.25ex\right\vert}}

\newcommand{\revision}[1]{{\color{red} \bf #1}}
\renewcommand{\revision}[1]{#1}

\newcommand{\rev}[1]{{\color{red} {#1}}}
\renewcommand{\rev}[1]{#1}

\title[Christoffel Least Squares]{A Christoffel function weighted least squares algorithm for collocation approximations}
\author{Akil Narayan}
\thanks{Akil Narayan. Mathematics Department and Scientific Computing and Imaging Institute, University of Utah, University of Utah, Salt Lake City, UT 84112. A.~Narayan was partially supported by AFOSR FA9550-15-1-0467 and DARPA N660011524053}
\author{John D. Jakeman}
\thanks{John D. Jakeman. Computer Science Research Institute, Sandia National Laboratories, 1450 Innovation Parkway, SE, Albuquerque, NM 87123}
\author{Tao Zhou}
\thanks{Tao Zhou. Institute of Computational Mathematics and the Chinese Academy of Sciences, Beijing, China. T.~Zhou work was supported the National Natural Science Foundation of China (Award Nos. 91130003 and 11201461).}

\begin{document}

\begin{abstract}
  We propose, theoretically investigate, and numerically validate an algorithm for the Monte Carlo solution of least-squares polynomial approximation problems in a collocation framework. Our investigation is motivated by applications in the collocation approximation of parametric functions, which frequently entails construction of surrogates via orthogonal polynomials. A standard Monte Carlo approach would draw samples according to the density \rev{defining the orthogonal polynomial family}. Our proposed algorithm instead samples with respect to the (weighted) pluripotential equilibrium measure of the domain, and subsequently solves a weighted least-squares problem, with weights given by evaluations of the Christoffel function. We present theoretical analysis to motivate the algorithm, and numerical results that show our method is superior to standard Monte Carlo methods in many situations of interest.
\end{abstract}

\maketitle

\section{Introduction and main results}\label{sec:introduction}

We consider the polynomial approximation of a function $f: \R^d \rightarrow \R$ using a least-squares collocation method. We are particularly interested in the case when the argument to $f$ is a finite-dimensional random variable $z$ (denoted lowercase throughout) with associated probability density function $w$. In this case, approximation of $f(z)$ is typically carried out in a $w$-weighted norm and can be constructed using a Monte Carlo procedure. This problem is particularly germane for parametric uncertainty quantification where $f$ is usually a parameterized function with random parameter $z$ \cite{xiu_numerical_2010}. Constructing a polynomial surrogate is a standard approach and is frequently explored via generalized Polynomial Chaos where $f$ is expanded in a basis whose polynomial elements are orthogonal under the weight $w$ \cite{wiener_homogeneous_1938,xiu_wiener--askey_2002}. Using a collocation procedure to construct this polynomial is advantageous in practical large-scale simulations \cite{xiu_high-order_2005,narayan_stochastic_2015}.

While interpolatory approaches \cite{narayan_stochastic_2012,barthelmann_high_2000} and compressive sampling or $\ell^1$ regularization techniques \cite{doostan_non-adapted_2011,rauhut_sparse_2012} are effective, the least-squares $\ell^2$ regularization procedure is one of the simplest strategies that offers an attractive balance between cost and accuracy.  Many existing methods for least-squares regression in this context concentrate on Monte Carlo approaches where the random variable ensemble $\{z_i\}_i$ is sampled iid according to the weight function $w$ \cite{migliorati_analysis_2014,tang_discrete_2014,chkifa_discrete_2013,migliorati_approximation_2013}. Alternative methods include the use of deterministic point constructions \cite{zhou_multivariate_2014} or strategies involving subsampling from a ``good" high-cardinality mesh \cite{zhou_subsampling_2014}.

This paper presents analysis and computational results for a type of weighted Monte Carlo approach for least-squares polynomial approximation that we call Christoffel Least Squares (CLS). The CLS prescription has two simple ingredients: given a probability weight/density $w$, we sample iid with respect to the (weighted) pluripotential equilibrium measure (\textit{not} iid from $w$), and the weights are evaluations of the Christoffel function from the $w$-orthogonal polynomial family. The concrete procedures are shown in Algorithms \ref{alg:cls-bounded} and \ref{alg:cls-unbounded}. If one writes the least-squares problem in matrix formulation as an algebraic problem, weighting by the Christoffel function is equivalent to normalizing the system matrix so that each row has the same discrete $\ell^2$ norm. The CLS algorithm is applicable for bounded and unbounded domains, with tensor-product or more general non-tensor-product weights and domains.

Our analysis for the CLS method for polynomial approximation is based on the general least-squares theory presented in \cite{cohen_stability_2013}. Given an $N$-dimensional subspace $P$ of $L^2_w(D)$ for some closed set $D \subset \R^d$, let $\phi_n(z)$ denote any orthonormal family for $P$. We let $K(z)$ denote the ``diagonal" of the reproducing kernel of $P$ in $L^2_w$:
\begin{align*}
  K(z) = \sum_{n=1}^N \phi_n^2(z).
\end{align*}
Note that, fixing $D$ and $w$, $K$ does not depend on which orthonormal basis for $P$ is chosen.{\footnote{\rev{With $\boldsymbol{\phi}$ a vector containing the $\phi_n$, then $K(z) = \boldsymbol{\phi}^T \boldsymbol{\phi}$. Thus, any change-of-basis $\boldsymbol{\psi} \gets \boldsymbol{U} \boldsymbol{\phi}$ via any orthogonal matrix $\V{U}$ preserves $K(z) = \boldsymbol{\psi}^T \boldsymbol{\psi} = \boldsymbol{\phi}^T \boldsymbol{\phi}$.}} The analysis in \cite{cohen_stability_2013} shows that a Monte Carlo least-squares approximation method with samples chosen iid from $w$ is stable and accurate with high probability if the number of samples $S$ satisfies
\begin{align}\label{eq:ls-stability-factor}
  \frac{S}{N \log S} \gtrsim C \frac{\|K\|_\infty}{N}
\end{align}
where $C$ is a universal constant and $\|K\|_\infty \triangleq \max_{z \in D} K(z)$. Since $K$ is a reproducing kernel diagonal and $w$ is a probability density, then a lower bound on the value for $\|K\|_\infty$ is $N$:
\rev{ {
\begin{align*}
  N = \int_{D} \sum_{n=1}^N \phi_n^2(z) w(z) \dx{z} \leq \left[ \max_{z \in D} \sum_{n=1}^N \phi_n^2(z) \right] \int_D w(z) \dx{z} = \|K\|_\infty
\end{align*}  
}
}
I.e., the best (smallest) possible value of $\|K\|_\infty/N$ in \eqref{eq:ls-stability-factor} is unity. However, with $P$ a total-degree polynomial space, for many weights $w$ of interest the actual value of this quantity is very large and is quite sensitive to the choice of $w$ (see Figure \ref{fig:maxK-plots}), and therefore makes the requirement for stability computationally onerous.

The CLS algorithm we present in this paper mitigates this situation by leveraging the fact that, for polynomials, the asymptotic behavior of the total-degree space reproducing kernel diagonal is known in great generality. (In this paper, ``asymptotic" means with respect to the polynomial degree.) Let $P_k$ denote the space of polynomials of degree $k$ or less over $D \subset \R^d$, so that $N = \dim P_k = \left(\begin{array}{c} d+k \\ d \end{array}\right)$. We let $K_k$ denote the $L^2_w$ reproducing kernel diagonal of $P_k$. The quantity $N/K_k$ is the (normalized) Christoffel function from the theory of orthogonal polynomials (e.g., \cite{nevai_geza_1986}), and is the eponymn of the CLS algorithm. If $D$ is compact with non-vanishing interior and positive $d$-dimensional Lebesgue measure, and $w$ is continuous on the interior of $D$ and admits an orthogonal polynomial family, then 
\begin{align}\label{eq:K-approximate-asymptotics}
  \lim_{k \rightarrow \infty} \frac{N}{K_k(z)} = \frac{w(z)}{v(z)},
\end{align}
almost everywhere in $D$, where $v(z)$ is the Lebesgue weight function (a probability density) of the pluripotential equilibrium measure of $D$ \cite{berman_bergman_2009-1}. For example, in $d=1$ dimension on the interval $D = [-1,1]$, $v(z)$ is the arcsine or ``Chebyshev" density. The utility of this statement for least-squares approximations is that the non-polynomial functions
\begin{align*}
  \psi_n = \sqrt{\frac{N}{K_k(z)}} \phi_n(z),
\end{align*}
form a basis for approximation in the space $\frac{1}{\sqrt{K_k}} P_k$, and are orthogonal in an $L^2$ space with the modified weight function $w \frac{K_k}{N}$. Owing to \eqref{eq:K-approximate-asymptotics}, the $\psi_n$ are therefore approximately orthonormal with respect to $v$, and so an ``approximate" reproducing kernel diagonal is given by 
\begin{align*}
  \widetilde{K}_k = \sum_n \psi_n^2 = \frac{N\, K_k}{K_k} = N,
\end{align*}
and this therefore attains the the optimal (smallest) supremum value of $N$. Therefore, if we instead perform a Monte Carlo approximation with the $\psi_n$, sampling from $v$, then it may be possible to obtain the optimal sample-count stability criterion from \eqref{eq:ls-stability-factor} for most weights $w$ of interest. This, in a nutshell, is the CLS algorithm. Although we have framed this discussion for compact domains and total-degree polynomial spaces, the CLS method may be applied for general polynomial subspaces on conic unbounded domains with exponential weights.


We present theoretical analysis following the results in \cite{cohen_stability_2013} that crystallizes the motivation above, and accompanying numerical simulations show that the CLS algorithm significantly outperforms standard MC methods in many (but not all) scenarios of practical interest. \rev{For a general polynomial subspace $P$ and its associated reproducing kernel diagonal $K$,} the CLS algorithm performs approximation on the $L^2$ space weighted with $\widetilde{w} = \frac{N}{K(z)} v(z)$. Thus, the theory depends on a measure of discrepancy between $\widetilde{w}$ and $w$. One such measure on the space $P$ \rev{is independent of the function being approximated:} it is the $\widetilde{w}$-Gramian of the \rev{$w$-orthonormal basis} $\phi_n$, an $N \times N$ matrix $\V{R}$ with entries:
\begin{align*}
  (R)_{m,n} = \int_D \phi_m(z) \phi_n(z) \, \widetilde{w}(z) \dx{z}
\end{align*}
Let $\Pi $ denote the $L^2_w$-orthogonal projector onto $P$, where $L^2_w$ is the $w$-weighted $L^2$ space on $D$ with norm $\|f\|^2 = \int_D f^2(z) w(z) \dx{z}$.  Similarly, let $\widetilde{\Pi}$ denote the $L^2_{\widetilde{w}}$-orthogonal projector onto $P$. \rev{A second $w$ versus $\widetilde{w}$ discrepancy measure is data-dependent, the error in projections:
\begin{align*}
  d(f) = \left\|\widetilde{\Pi} f - \Pi f \right\|_w
\end{align*}
which vanishes for any $f \in P$, or for any $f$ when $\widetilde{w} = w$.}

The following is one of our major theoretical results, summarizing our Theorem \ref{thm:bounded-convergence}, and frames accuracy in terms of $d(f)$ and spectral quantities of $\V{R}$.
\begin{mythm}
  Let $D$ be compact. The CLS algorithm, i.e., discrete least-squares approximation by sampling iid from the equilibrium measure $v$ and weighting with the inverse kernel diagonal $N/K(z)$, is stable with high probability if, for any $r > 0$, the number of samples $S$ satisfies 
  \begin{align*}
    \frac{S}{N \log S} \geq C \frac{1+r}{\lambda_{\mathrm{min}}(\V{R})}
  \end{align*}
  where $C$ is an absolute constant. Let $f$ be a function satisfying $|f|\leq L$, and let $\widetilde{\Pi}^S f$ denote the $S$-sample CLS estimator of $f$ on $P$. Then under the sampling criterion above,
\rev{
  \begin{align*}
    \E \left[ \left\|f - T_L(\widetilde{\Pi}^S f) \right\|_w^2\right] \leq \left\|f - \Pi f\right\|_w^2 + \frac{\varepsilon(S)}{\lambda_{\mathrm{min}}(\V{R})} \left\|f - \Pi f\right\|^2_{\widetilde{w}} + \frac{8 L^2}{S^r} + 4\kappa^2(\V{R}) d^2(f) 
  \end{align*}
}
  where $T_L(x)= \mathrm{sgn} (x) \min\left\{|x|, L\right\}$ is a truncation function, and $\varepsilon(S) \sim \frac{1}{r \log S} \rightarrow 0$ as $S \rightarrow \infty$.
\end{mythm}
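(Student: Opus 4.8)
The plan is to recast the CLS least-squares problem so that the general Monte-Carlo theory of \cite{cohen_stability_2013} applies verbatim, and then to convert its conclusions---which are naturally phrased in the $\widetilde{w}$ geometry and centered at $\widetilde{\Pi}$---back into the $w$ geometry centered at $\Pi$. First I would observe that sampling $z_i$ iid from $v$ and weighting the residual by $N/K(z_i)$ is exactly an \emph{unweighted} least-squares problem in the rescaled basis $\psi_n = \sqrt{N/K}\,\phi_n$: the design matrix has rows $(\psi_1(z_i),\dots,\psi_N(z_i))$, and its expected Gram matrix is $\E[\V{G}] = \int_D \psi_m\psi_n\, v\,\dx{z} = \int_D \phi_m\phi_n\,\widetilde{w}\,\dx{z} = (R)_{m,n} = \V{R}$. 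The minimizer of the continuous problem $\min_{g\in P}\int_D (f-g)^2\widetilde{w}\,\dx{z}$ is $\widetilde{\Pi} f$, so $\widetilde{\Pi}^S f$ is the empirical counterpart of $\widetilde{\Pi} f$.

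To fit the framework of \cite{cohen_stability_2013}, which requires a basis orthonormal in the sampling measure, I would pre-whiten: since $\V{R}$ is symmetric positive-definite, the functions defined by $\boldsymbol{\eta} = \V{R}^{-1/2}\boldsymbol{\psi}$ are $L^2_v$-orthonormal. The decisive point is the pointwise identity $\sum_n\psi_n^2(z) = \frac{N}{K(z)}\sum_n\phi_n^2(z) \equiv N$, which holds for every $z$ regardless of orthonormality; consequently the reproducing-kernel diagonal of the whitened basis obeys $\sum_n\eta_n^2(z) = \boldsymbol{\psi}(z)^T\V{R}^{-1}\boldsymbol{\psi}(z) \le \|\boldsymbol{\psi}(z)\|_2^2/\lambda_{\mathrm{min}}(\V{R}) = N/\lambda_{\mathrm{min}}(\V{R})$. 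Feeding this bound into the matrix-concentration estimate of \cite{cohen_stability_2013} in place of $\|K\|_\infty$ in \eqref{eq:ls-stability-factor} produces an event $\Omega_+$---on which the whitened empirical Gram matrix is within, say, $\tfrac12$ of the identity in operator norm---of probability at least $1-2S^{-r}$ precisely when $\frac{S}{N\log S}\ge C\frac{1+r}{\lambda_{\mathrm{min}}(\V{R})}$, which is the stated stability criterion.

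For the error bound I would split on $\Omega_+$ and its complement. On $\Omega_+^c$ the truncation gives $|f-T_L(\widetilde{\Pi}^S f)|\le 2L$ pointwise, so this event contributes at most $4L^2\,\mathbb{P}(\Omega_+^c)\le 8L^2 S^{-r}$, the third term. On $\Omega_+$, since $|f|\le L$ the map $T_L$ is a contraction toward $f$ and may be dropped, and since $\widetilde{\Pi}^S f\in P$ while $f-\Pi f\perp_w P$, the $w$-Pythagorean identity yields $\|f-\widetilde{\Pi}^S f\|_w^2 = \|f-\Pi f\|_w^2 + \|\Pi f-\widetilde{\Pi}^S f\|_w^2$, isolating the first term. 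It remains to control the in-$P$ quantity $\|\Pi f-\widetilde{\Pi}^S f\|_w^2$, and here the norm equivalence $\lambda_{\mathrm{min}}(\V{R})\|g\|_w^2\le\|g\|_{\widetilde{w}}^2\le\lambda_{\mathrm{max}}(\V{R})\|g\|_w^2$, valid for all $g\in P$, is the workhorse. Writing $\Pi f-\widetilde{\Pi}^S f = (\Pi f-\widetilde{\Pi} f)+(\widetilde{\Pi} f-\widetilde{\Pi}^S f)$, the first summand has $w$-norm exactly $d(f)$ and furnishes the condition-number-weighted bias term after passing through the $\widetilde{w}$/$w$ equivalence, while the second is the genuine sampling error, which the variance estimate of \cite{cohen_stability_2013} bounds in the $\widetilde{w}$ norm by $\varepsilon(S)\|f-\widetilde{\Pi} f\|_{\widetilde{w}}^2\le \varepsilon(S)\|f-\Pi f\|_{\widetilde{w}}^2$; dividing by $\lambda_{\mathrm{min}}(\V{R})$ to return to the $w$ norm produces the second term.

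The main obstacle I anticipate is bookkeeping the two weight geometries simultaneously: the clean conclusions of \cite{cohen_stability_2013} live entirely in the $\widetilde{w}$ inner product and are centered at $\widetilde{\Pi} f$, whereas the theorem is stated in the $w$ norm and centered at the best $w$-approximation $\Pi f$. Every conversion between these costs a factor of $\lambda_{\mathrm{min}}(\V{R})^{-1}$ or $\lambda_{\mathrm{max}}(\V{R})$, and the bias $\Pi f-\widetilde{\Pi} f$ must be carried through both the empirical inverse $\V{G}^{-1}$ (whose norm is controlled by $\lambda_{\mathrm{min}}(\V{R})^{-1}$ on $\Omega_+$) and the final $\widetilde{w}\to w$ conversion, which is the origin of the $\kappa^2(\V{R})$ amplification on $d^2(f)$. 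Tracking these factors carefully---rather than any single hard inequality---is the crux; the probabilistic content is entirely inherited from the whitened matrix-concentration step, and the remaining estimates are deterministic norm comparisons on the finite-dimensional space $P$.
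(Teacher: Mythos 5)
Your proposal is correct. Its stability half is essentially the paper's own argument in different clothing: the paper's proof of Theorem \ref{thm:cls-bounded-stability} conjugates the rank-one summands of $\V{G}$ by $\V{R}^{-1/2}$ and bounds each spectral norm by $N/(S\lambda_{\mathrm{min}}(\V{R}))$ using precisely your pointwise identity $\sum_n \psi_n^2 \equiv N$, then invokes the matrix Chernoff bound; your ``pre-whitening'' of the basis is the same computation. The convergence half, however, takes a genuinely different route. The paper never splits $\widetilde{\Pi}^S f - \Pi f$ into bias plus fluctuation: with $g = f - \Pi f$ it bounds the whole in-$P$ error through the normal equations, $\|\V{b}\|^2 \leq \tfrac{4}{\lambda_{\mathrm{min}}^2(\V{R})} \| \tfrac{1}{S}\V{V}^T \V{K} \V{g}\|^2$, and then computes the expectation of the right-hand side, which separates into a variance piece and a squared-mean piece $\|\V{h}\|^2 \leq \lambda_{\mathrm{max}}^2(\V{R}) \|\widetilde{\Pi} g\|_w^2$; because the nonzero mean $\V{h} = \V{R}\V{d}$ is pushed through $\V{G}^{-1}$ (cost $4/\lambda_{\mathrm{min}}^2(\V{R})$) and through $\V{R}$ (cost $\lambda_{\mathrm{max}}^2(\V{R})$), the paper's bias term comes out as $4\kappa^2(\V{R}) d^2(f)$. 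Your decomposition $\Pi f - \widetilde{\Pi}^S f = (\Pi f - \widetilde{\Pi} f) + \widetilde{\Pi}^S(\widetilde{\Pi} f - f)$ keeps the bias deterministic, so it never meets $\V{G}^{-1}$, while the random part is mean-zero in the $\widetilde{w}$-geometry; hence the variance computation of \cite{cohen_stability_2013} applies with the whitened kernel bound $N/\lambda_{\mathrm{min}}(\V{R})$, and the $\lambda_{\mathrm{min}}$ cancels against the sampling condition exactly as you say. After Young's inequality your route yields roughly $2 d^2(f) + \tfrac{2\varepsilon(S)}{\lambda_{\mathrm{min}}(\V{R})}\|f - \Pi f\|_{\widetilde{w}}^2$, which implies the stated bound and is in fact slightly sharper, since $2 \leq 4\kappa^2(\V{R})$ and the factor $2$ on the variance is absorbed by $\varepsilon(S) \sim 1/(r\log S)$. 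Two small repairs: first, your closing paragraph asserts that the bias ``must be carried through $\V{G}^{-1}$, which is the origin of the $\kappa^2(\V{R})$ amplification''---that describes the paper's route, not yours; in your decomposition no $\kappa^2$ arises at all, and the phrase ``after passing through the $\widetilde{w}$/$w$ equivalence'' is superfluous for the bias term, whose $w$-norm is already exactly $d(f)$. Second, make the split $\|a+b\|_w^2 \leq 2\|a\|_w^2 + 2\|b\|_w^2$ explicit, since the cross term between bias and fluctuation does not vanish once you condition on $\Omega_+$.
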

Above, $\lambda_{\mathrm{min}}(\V{R})$ is the minimum eigenvalue and $\kappa(\V{R})$ the 2-norm condition number of $\V{R}$. Our numerical results for one dimension indicate that both of these quantities are very well-behaved for general $w$ (see Figure \ref{fig:qn-mineig}) and thus are a significant improvement over the standard MC approach criterion \eqref{eq:ls-stability-factor}. \rev{The scalar $r$ introduced in the theorem above is a tunable factor that quantifies the oversampling rate, and is not a novel by-product of the CLS procedure or its theory. }The projection discrepancy term $d(f)$ in the conclusion of the theorem above does not vanish as $S \rightarrow \infty$, \rev{and thus this theory appears uncompetitive with established theory for a ``standard" Monte Carlo (MC) method. (We review some of this theory in Section \ref{sec:background-ls}.) However, our empirical investigation in Section \ref{sec:cls-accuracy} indicates that the error bound above for the CLS procedure has the same magnitude as the MC bounds. And although we cannot yet rigorously show the comparability of the CLS versus MC theory, our numerical results in Section \ref{sec:results} indicate that the CLS algorithm is frequently superior to a standard least-squares MC approach for polynomial approximation.}

Although we have described only the bounded-domain case above, the CLS algorithm is also applicable on unbounded domains with exponential weights. The unbounded case presents no great difficulty in terms of analytical results comparable to the bounded case (see Theorems \ref{thm:unbounded-stability} and \ref{thm:unbounded-convergence}), but the implementation is less straightforward because an explicit formula for the sampling measure (the weighted pluripotential equilibrium measure) is not yet known for weights of interest. Nevertheless, we conjecture the forms of these weights and our simulations yield results that support our conjectures, see Table \ref{tab:cls-sampling} and Section \ref{sec:results}.

\section{Setup}\label{sec:setup}
Let $D \subset \R^d$ be the domain and let $w: D \rightarrow \R$ be a weight function. We assume that the pair $(D,w)$ is ``admissible", by which we mean it falls into one of the following categories:
\begin{itemize}
  \item (\textit{bounded}) $D$ is a compact set of nonzero $d$-dimensional Lebesgue measure and nonempty interior, and $w$ is a continuous function on the interior of $D$ such that $0 < \int_D p^2(z) w(z) \dx{z} < \infty$ for any nontrivial algebraic polynomial $p$.
  \item (\textit{unbounded}) $D$ is an origin-centered unbounded conic domain (i.e., if $z \in D$, then $c z \in D$ for all $c \geq 0$) with nonzero $d$-dimensional Lebesgue measure, and $w = \exp(-2 Q(z))$, with $Q$ satisfying (i) $\lim_{|z|\rightarrow\infty} Q(z)/|z| > 0$, and (ii) there is a constant $t \geq 1$ such that 
    {\begin{align}\label{eq:q-homogeneous}
        Q(c z) &= c^t Q(z), & \forall\;\; z \in D,\; c > 0.
    \end{align}}
\end{itemize}
The condition \eqref{eq:q-homogeneous} states that $Q = -\frac{1}{2} \log w$ is a $t$-homogeneous function. The unbounded case with the homogeneity condition on $Q$ includes the following general family of weights:
\begin{align*}
  Q(z) &= \left\| z\right\|_p^t, & \|z\|_p \triangleq \left( \sum_{k=1}^d |z_k|^p \right)^{1/p},
\end{align*}
for any $p \geq 1$, which includes as special cases the one-sided exponential weight $w(z) = \exp\left(-\sum_j z_j\right)$ on $D = [0,\infty)^d$, and the Gaussian density function $w(z) = \exp\left(-\sum_j z_j^2\right)$ on $D = \R^d$. We will frequently write $|z|$ to mean $\|z\|_2$. 

When citing results from pluripotential theory we will identify the function $\exp(-Q)$ as a ``pluripotential weight function", which will be used a theoretical tool. For the CLS-unbounded case, note that specification of $Q$ uniques defines $w$. Whereas, for the CLS-bounded case we will take $Q=0$ (for any admissible $w$). Such correspondences between $w$ and $Q$ are made in this paper to be consistent with notational conventions in pluripotential theory. 

\subsection{Orthogonal polynomials}
If $(D,w)$ is admissible, then an $L^2_w(D)$ orthogonal polynomial family exists (see., e.g., \cite{ernst_convergence_2012,lubinsky_survey_2007}). For a multi-index $\alpha \in \N_0^d$ we let $\phi_\alpha$ denote the family of polynomials \textit{orthonormal} under the $w$-weighted $L^2$ norm on $D$: 
\begin{align*}
  \left\langle \phi_\alpha, \phi_\beta \right\rangle_w &= \int_D \phi_\alpha \phi_\beta w\, \dx{z} = \delta_{\alpha,\beta},
\end{align*}
with $\|\cdot\|_w$ the corresponding induced norm on $L^2_w$. We implicitly assume that $\deg \phi_\alpha = |\alpha| \triangleq \alpha_1 + \cdots + \alpha_d$, with $\alpha_j$ the components of $\alpha$. We use $\Lambda \subset \N_0^d$ to denote a general multi-index set, with $P \triangleq \mathrm{span} \left\{ \phi_\alpha\; |\; \alpha \in \Lambda\right\}$ \rev{and the associated $L^2_w$-orthogonal projection onto $P$:
  \begin{align*}
    \Pi f &\triangleq \sum_{\alpha \in \Lambda} c_\alpha \phi_\alpha, & c_\alpha &= \int_D f(z) \phi_\alpha(z) \dx{z}.
  \end{align*}
  For notational simplicity, we suppress the explicit dependence of $P$ and $\Pi$ on the index set $\Lambda$.
}

Given any $N$-dimensional subspace $P$ of $L^2_w$, in what follows we will study the ``diagonal" of its reproducing kernel in $L^2_w$. This quantity is given by
\begin{align}\label{eq:K-def}
  K(z) = \sum_{\alpha \in \Lambda} \phi_\alpha^2(z) = \sum_{n=1}^N \phi_n^2(z),
\end{align}
and is not dependent on the choice of basis $\phi_n$. The above equation implicitly assumes a linear ordering of the elements in $P$:
\begin{align*}
  \left\{ \phi_{\alpha}\right\}_{\alpha \in \Lambda} \Longleftrightarrow \left\{ \phi_n\right\}_{n=1}^N
\end{align*}
We will occasionally make use of this identification for notational convenience; the ordering of the indices in $\Lambda$ with respect to $1, \ldots, N$ is irrelevant in our context.

\rev{In this paper we will consider general index sets $\Lambda$, but some of our theoretical results focus on the multi-index set $\Lambda_k \triangleq \left\{ \alpha \in \N_0^d\, | \, |\alpha| \leq k\right\}$ corresponding to the degree-$k$ polynomial space. We will use the notation $P_k$, $\Pi_k$, and $K_k$ in the special case $\Lambda = \Lambda_k$. See Table \ref{tab:notation} for a summary of notation.}



\subsection{Discrete least-squares approximation}\label{sec:setup-dls}
We consider the problem of least-squares approximation using discrete collocation samples onto a polynomial space $P$ defined by general index set $\Lambda$. This regression problem is a discrete approximation to a continuous projection. We approximate the $L^2_w$-orthogonal projection of a function $f(z)$ onto $P$ by sampling $f$ at discrete locations. 

The continuous projection onto $P$ satisfies 
\begin{align}\label{eq:continuous-projection}
  \Pi f \triangleq \argmin_{p \in P} \left\| f - p \right\|_{L^2_w} = \argmin_{p \in P} \E \left[ f(z) - p(z) \right]^2,
\end{align}
where in the latter equality we consider $z$ a random variable with density $w$. In practice this optimal projection can rarely be computed because of insufficient knowledge about $f$. An alternative approach is discrete approximation: compute the minimizer of a discretization of the continuous norm. If $\left\{z_s\right\}$ are iid samples of the random variable $z$, then an approximation to $\Pi f$ from \eqref{eq:continuous-projection} can be computed by using $S$ of these samples:
\begin{align}\label{eq:unweighted-least-squares}
  \Pi^S f = \argmin_{p \in P} \frac{1}{S} \sum_{s=1}^S \left| f(z_s) - p(z_s) \right|^2.
\end{align}
If the samples $z_s$ are drawn iid from $w$, then it is straightforward to see that $\lim_{S \rightarrow \infty} \Pi^S f = \Pi f$, with more precise conditions on accuracy in \cite{cohen_stability_2013}, whose main results are reiterated in Section \ref{sec:background-ls}. More generally, one can take $\left\{z_s\right\}$ to be iid samples, but drawn from a different density $v$, in which case a weighted formulation is required to approximate the $w$ norm
\begin{align}\label{eq:weighted-least-squares}
  \widetilde{\Pi}^S f = \argmin_{p \in P} \frac{1}{S} \sum_{s=1}^S k_s \left| f(z_s) - p(z_s) \right|^2 
\end{align}
Using a change-of-measure argument, we see that if we choose $k_s = w(z_s)/v(z_s)$ and the support of $v$ contains the support of $w$, then $\lim_{S \rightarrow \infty} \widetilde{\Pi}^S f = \Pi f$. 

In either case \eqref{eq:unweighted-least-squares} or \eqref{eq:weighted-least-squares}, if the $S$ samples $z_s$ are given, we can formulate the algebraic version of these problems. For a fixed index set $\Lambda$, recall that $N = N(\Lambda)$ denotes the dimension of $P$. Let $\mathbf{V}$ be the $S \times N$ Vandermonde-like matrix for the basis $\phi_n$ with samples $z_s$: $(V)_{s,n} = \phi_n(z_s)$. We may express the approximation $\widetilde{\Pi}^S$ from \eqref{eq:weighted-least-squares} in the basis $\phi_\alpha$:
\begin{align*}
  \widetilde{\Pi}^S f = \sum_{\alpha \in \Lambda} c_\alpha \phi_\alpha(z).
\end{align*}
We collect the unknown coefficients $c_\alpha$ into the vector $\mathbf{c}$, and the function evaluations $f(z_s)$ into the vector $\mathbf{f} \in \R^S$. The solution to \eqref{eq:weighted-least-squares} is defined by the least-squares solution to the following $k_s$-weighted problem:
\begin{align}\label{eq:regression-algebraic-minimization}
  \mathbf{c} = \argmin_{\mathbf{g} \in \R^N} \left\| \sqrt{\mathbf{K}} \mathbf{V} \mathbf{g} - \sqrt{\mathbf{K}} \mathbf{f} \right\|^2,
\end{align}
where $\mathbf{K}$ is an $S \times S$ diagonal matrix with entries $(K)_{s,s} = k_s$. Equivalently, we may seek the solution to the normal equations:
\begin{align}\label{eq:normal-equations}
  \mathbf{G} \mathbf{c} = \frac{1}{S} \mathbf{V}^T \mathbf{K} \mathbf{f},
\end{align}
where $\mathbf{G}$ is an $N \times N$ Gramian matrix with the random entries
\begin{align}\label{eq:discrete-inner-product}
  (G)_{n,m} &= \left\langle \phi_n, \phi_m \right\rangle_{S}, & 
  \left\langle g, h \right\rangle_{S} \triangleq \frac{1}{S} \sum_{s=1}^S k_s g(z_s) h(z_s).
\end{align}
In this paper, we seek to specify the measure from which the $z_s$ are drawn, and subsequently the weights $k_s$. We will see that in the CLS algorithm, $k_s \approx v(z_s)/w(z_s)$, i.e., that our change of measure is not exactly faithful to $w$. The unweighted algorithm associated with \eqref{eq:unweighted-least-squares}, proceeding by choosing $k_s \equiv 1$ and taking the sampling density equal to orthogonality density $v = w$, is given in Algorithm \ref{alg:unweighted-mc}.

\begin{algorithm}
\SetKwInOut{Input}{input}\SetKwInOut{Output}{output}

\Input{Weight/density function $w$ with associated orthonormal family $\phi_\alpha$, index set $\Lambda$, function $f$}
\Output{Expansion coefficients $\V{c}$ to approximate $\Pi_\Lambda f$}

\BlankLine
Generate $S$ iid samples $\left\{z_s\right\}$ from density $w$\;
Assemble $\V{f}$ with entries $(f)_s = f(z_s)$\;
Form $S \times N(\Lambda)$ Vandermonde-like matrix $\V{V}$ with entries $(V)_{s,n} = \phi_{\alpha(n)}(z_s)$\;
Compute $\V{c} = \argmin_{\V{g} \in \R^N} \left\| \V{V} \V{g} - \V{f} \right\|$\;

\caption{Unweighted least squares with Monte Carlo (LSMC)}\label{alg:unweighted-mc}
\end{algorithm}

\rev{
\begin{table}
  \begin{center}
  \resizebox{\textwidth}{!}{
    \renewcommand{\tabcolsep}{0.4cm}
    \renewcommand{\arraystretch}{1.3}
    {\scriptsize
    \begin{tabular}{@{}cp{0.8\textwidth}@{}}\toprule
      Symbol(s) & \\ \midrule
      $\phi_\alpha$, $\phi_n$ & $L^2_w$-orthonormal polynomials. $\deg \phi_\alpha = |\alpha|$. A bijection between $\alpha \in \Lambda$ and $n \in \left\{1, 2, \ldots, N\right\}$ is assumed. \\
      $\Lambda$, $N$ & General multi-index set, a subset of $\N_0^d$. $N = \left|\Lambda\right|$ \\
      $\Lambda_k$ & The degree-$k$ total-degree multi-index set: $\Lambda_k = \left\{ \alpha \in \N_0^d\, | \, |\alpha| \leq k\right\}$ \\
      $P$ ($P_k$) & Polynomial space spanned by $\phi_\alpha$ for all $\alpha \in \Lambda$. ($P_k$ corresponds to $\Lambda = \Lambda_k$)\\
      $\Pi$ ($\Pi_k$) & $L^2_w$-orthogonal projector onto $P$ (respectively, onto $P_k$)\\
      $K$ ($K_k$) & Reproducing kernel ``diagonal" of $P$ in $L^2_w$. (respectively, of $P_k$ in $L^2_w$)\\
      $\widetilde{w}$ & Equals equilibrium density weighted by normalized Christoffel function, see Section \ref{sec:cls-theory} \\
      $\widetilde{\Pi}$ ($\widetilde{\Pi}_k$) & $L^2_{\widetilde{w}}$-orthogonal projector onto $P$ (respectively, onto $P_k$)\\
      $\V{R}$ ($\V{R}_k$) & $L^2_{\widetilde{w}}$ Gramian of $\phi_\alpha$ for $\alpha \in \Lambda$ (respectively, for $\alpha \in \Lambda_k$)\\
    \bottomrule
    \end{tabular}
  }
    \renewcommand{\arraystretch}{1}
    \renewcommand{\tabcolsep}{12pt}
  }
  \end{center}
  \caption{\rev{Notation used throughout this article.}}\label{tab:notation}
\end{table}
}

\subsection{Equilibrium measures}
\rev{We review some results in weighted pluripotential theory with the goal of introducing the (weighted) equilibrium measure. In the CLS algorithm, this measure will define the sampling density $v$ from Section \ref{sec:setup-dls}. Standard references for pluripotential theory are \cite{bloom_weighted_2009,klimeck_pluripotential_1991} and Appendix B of \cite{saff_logarithmic_1997}. }

In the following brief discussion of pluripotential theory, we need to define the weight function $\exp(-Q)$ that is derived from the function $Q$. The weight function $\exp(-Q(z))$ on $D$ serves as the ``pluripotential weight function". In general our discussion in this subsection does not require $\exp(-Q)$ to be related to the orthogonality weight function $w$. However, in the context of the CLS algorithm, we will identify the pluripotential weight function $\exp(-Q)$ with $\sqrt{w}$ for the CLS-unbounded case, whereas in the CLS-bounded case we will always take $Q=0$, and thus $\exp(-Q) \equiv 1$.

\rev{
Consider the class of plurisubharmonic functions on $\C^d$ that grow at most logarithmically at infinity:
\begin{align*}
  \mathcal{L} = \left\{ u \textrm{ plurisubharmonic on } \C^d \; \big| \; u(z) \leq \max\left\{ \log(z), 0 \right\} + C \right\},
\end{align*}
where $C$ is a constant that depends on $u$. Given a pluripotential-theoretic-admissible domain $D$ and weight $\exp(-Q(z))$, the \textit{weighted extremal function} is the function
\begin{align*}
  V_{D,Q}(z) = \left\{ \sup u(z) \; \big|\;  u \in \mathcal{L} \textrm{ satisfying } u \leq Q \textrm{ on } D\right\}
\end{align*}
This function may be represented as an upper envelope of the logarithm of polynomials, whence the connection to polynomial approximation can be established. The regularization of this function defined as $V^\ast_{D,Q}(z) = \limsup_{\xi \rightarrow z} V_{D,Q}(\xi)$ is an uppersemicontinuous function. The weighted pluripotential equilibrium measure is given by
\begin{align*}
  \mu_{D,Q} = \frac{1}{(2 \pi)^d} \left( d d^c V^{\ast}_{D,Q} \right)^d,
\end{align*}
where $\left(d d^c u\right)^d$ is the complex Monge-Amp{\`e}re operator applied to $u$. (In the previous equation the superscript $d$ is the integer dimension, whereas the normal-text $d$ is a complex differential operator.) The measure $\mu_{D,Q}$ is a probability measure and has compact support. Sharp conditions under which $\mu_{D,Q}$ is absolutely continuous with respect to Lebesgue measure are not known in general, but some sufficient conditions are given in \cite{bedford_complex_1986,bloom_weighted_2009}.

In one dimension, these concepts reduce to one-dimensional concepts from potential theory: $V_{D,Q}$ is the weighted complex Green's function, $d d^c$ is (proportional to) the complex Laplacian, and $\mu_{D,Q}$ is the potential-theoretic (weighted) equilibrium measure.
}

For the bounded CLS-admissible case, we need the unweighted ($Q \equiv 0$) measure $\mu_D \equiv \mu_{D,0}$; we will denote its Lebesgue density as $\dx{\mu_D}(z) = v_D(z) \dx{z}$ (assuming such a density exists). The bounded CLS-admissible assumptions above guarantee in this case that $D$ is potential-theoretic admissible and so $\mu_D$ is well-defined, and is a probability measure on $D$. 

For the unbounded CLS-admissible case, we will take $\exp(-Q) = \sqrt{w}$ so that the potential-theoretic quantity $Q$ is given by $Q = - \frac{1}{2} \log w$. We denote the Lebesgue density of the corresponding weighted equilibrium measure as $\dx{\mu_{D,Q}}(z) = v_{D,Q}(z) \dx{z}$ (assuming such a density exists). The weighted equilibrium measure $\mu_{D,Q}$ here has compact support even though $D$ is unbounded. 


With regards to the CLS algorithm, we use $\mu_{D}$ as the sampling measure for the bounded case, and a scaled version of $\mu_{D,Q}$ as a sampling measure for the unbounded case. Section \ref{sec:cls} gives examples of the equilibrium measure density $v_D$. 

\section{Christoffel Least Squares}\label{sec:cls}

This section describes the novel algorithmic content of this paper: the Christoffel Least-Squares (CLS) algorithm applied to Monte Carlo approximation of $L^2$ projections. Essentially, this algorithm solves the problem \eqref{eq:weighted-least-squares} and thus requires (a) specification of the weights $k_s$ (i.e. the matrix $\V{K}$) and (b) specification of the sampling measure $v(z)$. The CLS algorithm takes on different formulations when the domain $D$ is bounded or unbounded, but a common formula in both cases is the specification of the weights $k_s$. For a general index set $\Lambda$, the CLS algorithm chooses $k_s$ to be quantities that scale each row of $\sqrt{\V{K}} \V{V}$ to have $\ell^2$ norm equal to the constant $N$, i.e.,
\begin{align}\label{eq:christoffel-weights}
  k_s = \frac{N}{\sum_{\alpha \in \Lambda} \phi_\alpha^2(z_s)} = \left(\frac{K(z_s)}{|\Lambda|}\right)^{-1}.
\end{align}
Thus, the weights $k_s$ are evaluations of the normalized Christoffel function.

When $D$ is bounded, the CLS algorithm chooses the sampling weight function as $v = v_D$, the density function of the (unweighted) pluripotential equilibrium measure of $D$. When $D$ is an unbounded conic domain, the sampling weight function $v$ is a scaled version of the $\sqrt{w}$-\textit{weighted} pluripotential equilibrium measure of the domain $D$. Thus, the particular specification of the sampling measure differs when $D$ is bounded versus unbounded. 

Informally, the CLS algorithm is reasonable because it adheres to the change-of-measure argument following equation \eqref{eq:weighted-least-squares}: if $v$ is the suitable equilibrium measure density and we consider approximation with $\Lambda = \Lambda_k$, then 
\begin{align}\label{eq:approx-christoffel-convergence}
  k_s = \frac{N}{K_{k}(z_s)} \sim \frac{w(z_s)}{v(z_s)}.
\end{align}
Indeed, this is true for very general weights and domains and is a major result in weighted pluripotential theory, which we discuss in Sections \ref{sec:background} and \ref{sec:cls-theory}. We also observe in Section \ref{sec:results} that, even for non-total-degree spaces, the CLS approximation performs quite well.

\subsection{Bounded domains}\label{sec:cls-bounded}

Let $w(z)$ be an admissible weight function on a compact domain $D$. In this case we sample with the density given by the (unweighted) equilibrium measure
\begin{align}\label{eq:cls-bounded-weight}
  v(z) = v_D(z) = \dfdx{\mu_{D}}{z},
\end{align}
which is a probability measure. (For our discussion, we assume $v_D$ exists.) We emphasize that this weight $v$ is independent of the orthogonality density $w$. The weights $k_s$ are as given in \eqref{eq:christoffel-weights}, and indirectly build in the dependence on $w$ through \eqref{eq:approx-christoffel-convergence}. The method is shown in Algorithm \ref{alg:cls-bounded}. 

\begin{algorithm}
\SetKwInOut{Input}{input}\SetKwInOut{Output}{output}

\Input{Weight/density function $w$ with associated orthonormal family $p_\alpha$, polynomial index set $\Lambda$ and corresponding size $N$, function $f$}
\Output{Expansion coefficients $\V{c}$ to approximate $\Pi u$}

\BlankLine
Generate $S$ iid samples $\left\{z_s\right\}$ from equilibrium measure $\mu_D$\;
Assemble $\V{u}$ with entries $(u)_s = u(z_s)$\;
Compute LS weights $\V{K}$ with entries $(K)_{s,s} = N/{K(z_s)}$ from \eqref{eq:christoffel-weights}\;
Form $S \times N$ Vandermonde-like matrix $\V{V}$ with entries $(V)_{s,n} = \phi_{n}(z_s)$\;
Compute $\V{c} = \argmin_{\V{g} \in \R^N} \left\| \sqrt{\V{K}} \V{V} \V{g} - \sqrt{\V{K}} \V{u} \right\|$\;

\caption{Christoffel Least Squares (CLS) on a compact domain $D$}\label{alg:cls-bounded}
\end{algorithm}


The pluripotential equilibrium measure generalizes the univariate potential-theoretic measure. In one dimension on $D = [-1,1]$, the measure $\mu_D$ is the arcsine measure with ``Chebyshev" density $v_D(z) = \frac{1}{\pi\sqrt{1-z^2}}$. Thus, on an interval the CLS algorithm prescribes Chebyshev sampling regardless of the weight. This conforms with the colloquially well-known observation that the Chebyshev measure on an interval is somehow ``universal", e.g., \cite{rauhut_sparse_2012}.

The equilibrium measure for $D = [-1,1]^d$ is the product measure of the univariate measure. For more complicated multivariate domains, computing $v_D(z)$ is not trivial, but some special cases have explicit formulas. For example, if $D$ is the unit ball in $\R^d$, $D = \left\{ z \in \R^d \, \big| \, \|z\|_{2} \leq 1\right\}$, the equilibrium measure $\mu_D$ has density $v_D(z) = \frac{2}{V_d} \left[1 - \|z\|_{2}\right]^{-1/2}$ with $V_d$ the volume of $D$ \cite{bos_asymptotics_1994}. Analysis for more general convex, origin-symmetric domains is given in \cite{bedford_complex_1986}. When $D$ is the unit simplex, $D = \left\{z\in \R^d \, \big| \, z_j \geq 0, \;\; 1 - \left\|z\right\|_{1} \geq 0\right\}$, the equilibrium density is $v_D(z) = C \left[ \left(1 - \left\|z\right\|_{1}\right) \prod_{j=1}^d z_j\right]^{-1/2}$, with $C = \pi^{(d+1)/2}/\Gamma((d+1)/2)$, see, e.g., \cite{xu_asymptotics_1999}. A fairly general result for convex sets is given in \cite{burns_monge-ampere_2010}.

Table \ref{tab:cls-sampling} summarizes some formulas for $v_{D,Q}$. In that table, the following notation for sets is used:
\begin{subequations}\label{eq:ball-simplex}
\begin{align}
  B^d &= \left\{ z = (z_1, \ldots, z_d) \in \R^d\; \big| \; |z|^2 = \sum_{j=1}^d z_j^2 \leq 1 \right\} \\
  T^d &= \left\{ z = (z_1, \ldots, z_d) \in \R^d\; \big| \; z_j \geq 0 \textrm{ and } \|z\|_1 = \sum_{j=1}^d z_j \leq 1 \right\}
\end{align}
\end{subequations}

\subsection{Unbounded domains}\label{sec:cls-unbounded}
We now consider the case of unbounded $D \subset \R^d$, for which we recall the assumption \eqref{eq:q-homogeneous}, that $Q = -\frac{1}{2} \log w$ is a homogeneous function of order $t$.

The CLS algorithm in this unbounded case chooses the weights as in \eqref{eq:christoffel-weights}, but the sampling weight function is a scaled version of the $\sqrt{w}$-weighted equilibrium measure.
\begin{align}\label{eq:cls-unbounded-weight}
  v(z) = k^{-d/t} v_{D,Q}\left(k^{-1/t} z\right) &= k^{-d/t} \dfdx{\mu_{D,Q}}{z}\left(k^{-1/t} z\right), & k &= \max_{\alpha \in \Lambda} |\alpha|.
\end{align}
Again, for the purposes of discussion we assume that the Lebesgue density $v_{D,Q}$ exists. The measure $\mu_{D,Q}$ has compact support, which is why we require scaling by $k^{1/t}$, effectively expanding the support to contain the ``important" parts of the domain. We give the method in Algorithm \ref{alg:cls-unbounded}. In the unbounded case, the scaling depends on the index set $\Lambda$ defining the polynomial space $P$.

To our knowledge, explicit formulae for multivariate \textit{weighted} equilibrium measures $\mu_{D,Q}$ on real-valued sets $D$ are currently unknown for many real-valued sets $D$, even the ``canonical" ones considered here. This is the case even for the simple case $\sqrt{w} = \exp(-|z|^2)$ so that $Q(z) = |z|^2$ on $D = \R^d$. However, we \textit{conjecture} the following density for the equilibrium measure in this case:
\begin{align*}
  \left.\begin{array}{l}
    Q(z) = \sqrt{w(z)} = \exp(-|z|^2), \\
    D = \R^d
  \end{array}\right\}
  \Longrightarrow
  \dfdx{\mu_{D,Q}}{z} = v_{D,Q}(z) = C \left[ 1 - |z|^2 \right]^{d/2},
\end{align*}
where $C$ is a normalization constant, and $\mu_{D,Q}$ is supported only on the set $B^d$. Similarly, for the weight $\sqrt{w} = \exp\left(-\sum_{j=1}^d z_j\right)$, we \textit{conjecture} the following equilibrium measure density:
\begin{align*}
  \left.\begin{array}{l}
    Q(z) = \sqrt{w(z)} = \exp\left(-\sum_{j=1}^d z_j\right), \\
    D = [0, \infty)^d \subset \R^d
  \end{array}\right\}
  \Longrightarrow
  \dfdx{\mu_{D,Q}}{z} = v_{D,Q}(z) = C \sqrt{\frac{\revision{\left( 2 - \sum_{j=1}^d z_j\right)^d}}{\prod_{j=1}^d z_j}},
\end{align*}
where again $C$ is a normalization constant, $\mu_{D,Q}$ is supported only on points $z \in 2 T^d$. In general, even computing just the support of $\mu_{D,Q}$ is a nontrivial task \cite{alan_supports_2013}. Note, however, in $d=1$ dimension, the weighted equilibrium measure is explicitly known for a wide class of weights on bounded and unbounded real-valued sets, e.g., \cite{saff_logarithmic_1997}. Our conjectured densities for $d \geq 1$ above specialize with $d=1$ to these known densities.

Numerical experiments we have conducted support our conjectures above, and examples shown in Section \ref{sec:results} are generated using these sampling schemes and show very good performance, which further support our conjectures. Section \ref{sec:results} also gives methodology for sampling from our conjectured densities.
 
\begin{algorithm}
\SetKwInOut{Input}{input}\SetKwInOut{Output}{output}

\Input{Weight/density function $w$ with associated orthonormal family $p_\alpha$, index set $\Lambda$ and corresponding size $N$, function $f$}
\Output{Expansion coefficients $\V{c}$ to approximate $\Pi u$}

\BlankLine
Compute log-weight homogeneity factor $t$ from a $w = \exp(-2 Q)$ identification in \eqref{eq:q-homogeneous}\;
Generate $S$ iid samples $\left\{z_s\right\}$ from equilibrium measure $\mu_{D,Q}$\;
Expand samples: $z_s \gets k^{1/t} z_s$, where $k = \max_{\alpha \in \Lambda} |\alpha|$\;
Assemble $\V{f}$ with entries $(f)_s = f(z_s)$\;
Compute LS weights $\V{K}$ with entries $(K)_{s,s} = N/{K(z_s)}$ from \eqref{eq:christoffel-weights}\;
Form $S \times N(\Lambda)$ Vandermonde-like matrix $\V{V}$ with entries $(V)_{s,n} = \phi_{\alpha(n)}(z_s)$\;
Compute $\V{c} = \argmin_{\V{g} \in \R^N} \left\| \sqrt{\V{K}} \V{V} \V{g} - \sqrt{\V{K}} \V{f} \right\|$\;

\caption{Christoffel Least Squares (CLS) on an unbounded $D$}\label{alg:cls-unbounded}
\end{algorithm}


\begin{table}
  \begin{center}
  \resizebox{\textwidth}{!}{
  \renewcommand{\tabcolsep}{0.4cm}
  \renewcommand{\arraystretch}{1.8}
  \begin{tabular}{@{}cccc@{}}\toprule
    Domain $D$ & Orthogonality weight $w$ & Sampling density domain & Sampling density $v(y) = \dfdx{\mu_{D,Q}}{y}$ \\ \midrule
    $[-1, 1]^d$ & Any admissible weight & $[-1,1]^d$ & $\frac{1}{\pi\prod_{j=1}^d \sqrt{1 - y_j^2}}$ \\
    $B^d$ & Any admissible weight & $B^d$ & $\frac{C}{\sqrt{1 - |y|^2}}$ \\[5pt]
    $T^d$ & Any admissible weight & $T^d$ & $C \sqrt{\frac{1 - \sum_{j=1}^d y_j}{\prod_{j=1}^d y_j}}$ \\
    $\R^d$ & $\exp(-|z|^2)$ & $(\ast\ast)\sqrt{2} B^d$ & $(\ast\ast)C \left[ 2 - |y|^2 \right]^{d/2}$ \\
    $[0,\infty)^d$ & $\exp\left(-\sum_{j=1}^d z_j\right)$ & $(\ast\ast)4 T^d$ & $(\ast\ast)C \sqrt{\frac{\revision{\left(4 - \sum_{j=1}^d y_j\right)^d}}{\prod_{j=1}^d y_j}}$ \\
  \bottomrule
  \end{tabular}
  \renewcommand{\arraystretch}{1}
  \renewcommand{\tabcolsep}{12pt}
}
\end{center}
\caption{Explicit CLS sampling strategies for particular bounded and unbounded scenarios. The sets $B^d$ and $T^d$ are defined in \eqref{eq:ball-simplex}. Entries preceded with $(\ast\ast)$ are \textit{conjectures} only. (Our conjectures specialize to known, correct results in one dimension \cite{saff_logarithmic_1997}.) For unbounded domains, the sampling density shown is over a compact domain; degree-scaled sampling according to Algorithm \ref{alg:cls-unbounded} should be performed, which depends on $\Lambda$. 
}\label{tab:cls-sampling}
\end{table}

We have completed specification of the CLS algorithm. It is a straightforward weighted Monte Carlo approach if, given $D$ and $w$, the equilibrium measure $v_D$ or $v_{D,Q}$ is known and may be sampled from. Our analysis presented later indicates that $v$ is the \textit{asymptotically} optimal measure, but is not strictly optimal for a fixed $\Lambda$. In general, one could consider sampling instead from the measure $\widetilde{w}(z) = w(z) \frac{K(z)}{N}$, and this is briefly explored in \cite{hampton_coherence_2014}.


\section{Background}\label{sec:background}
This section recalls the two cornerstone results we require: general discrete least-squares stability and accuracy from \cite{cohen_stability_2013}, and asymptotics of the Christoffel function from \cite{berman_bergman_2009,berman_bergman_2009-1}.

\subsection{Least squares stability and convergence}\label{sec:background-ls}
This section summarizes the main results of \cite{cohen_stability_2013}. For a weight function $w$ on $D \subset \R^d$, let $\phi_n$ for $n=1,2,\ldots$ be any $w$-orthonormal system (not necessarily polynomials). To be consistent with previous notation, we let $P$ denote the subspace spanned by these elements, even though this need not be a polynomial space. The diagonal of the reproducing kernel $K(z)$ is as before in \eqref{eq:K-def} and, with $P$ fixed, does not depend on the particular choice of basis. 

If we sample $S$ iid realizations according to the weight $w$ and form a discrete least-squares problem, the $S$-dependent Gramian matrix $\V{G}$ in \eqref{eq:discrete-inner-product} satisfies $\lim_{S \rightarrow \infty} \V{G} = \V{I}$, with $\V{I}$ the $N \times N$ identity matrix. $\V{G}$ is a random matrix, and one can use random matrix estimates to precisely specify the probability with which $\V{G}$ is close to $\V{I}$. In turn, this yields estimates on accuracy of the least-squares solution. In the following, $\vertiii{\cdot}$ is the spectral norm of a matrix.
\begin{theorem}[\cite{cohen_stability_2013}]\label{thm:ls-stability}
  Let $P$ be any $N$-dimensional subspace of $L^2_w$. Assume that $\left\{z_s\right\}_{s=1}^S$ are $S$ iid samples drawn from the density $w$, and that the number of samples satisfies
  \begin{align}\label{eq:cohen-sample-size}
    \frac{S}{N \log S} \geq \left[\frac{1+r}{c_\delta}\right] \frac{\|K(z)\|_\infty}{N}
  \end{align}
  with $c_\delta \triangleq \delta + (1 - \delta) \log (1 - \delta)$ for some $\delta \in (0,1)$ and $r > 0$. Then the discrete Gramian matrix $\mathbf{G}$ given by \eqref{eq:discrete-inner-product} satisfies the following stability condition:
  \begin{align}\label{eq:cohen-stability}
    \mathrm{Pr} \left[\, \vertiii{ \mathbf{G} - \mathbf{I}} > \delta \,\right] \leq \frac{2}{S^r}.
  \end{align}
  Furthermore, letting $\Pi^S$ be the unweighted Monte Carlo projection operator defined in \eqref{eq:unweighted-least-squares}, then the following convergence result holds for any $f \in L^2_w$ satisfying $\|f\|_\infty \leq L$:
  \begin{align}\label{eq:cohen-convergence}
    \E \left\| f - T_L\left(\Pi^S f \right)\right\|_w^2 \leq \left[ 1 + \varepsilon(S) \right] \left\|g\right\|_w^2 + \frac{8 L^2}{S^r},
  \end{align}
  where $g = (I - \Pi) f$ so that $\|g\|_w$ is the $L^2_w$ optimal error, $\varepsilon(S) \triangleq \frac{4 c_\delta}{(1+r) \log S}$, and $T_L(x)= \mathrm{sgn} (x) \min\left\{|x|, L\right\}$ is a truncation function.
\end{theorem}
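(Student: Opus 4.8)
The plan is to establish the stability bound \eqref{eq:cohen-stability} first and then feed it into the convergence bound \eqref{eq:cohen-convergence}; throughout, $k_s\equiv 1$ since we sample from $w$. The first move is to exhibit $\V{G}$ as a normalized sum of independent rank-one random matrices: writing $\V{v}_s\in\R^N$ for the transpose of the $s$-th row of $\V{V}$, so that $(\V{v}_s)_n=\phi_n(z_s)$, the definition \eqref{eq:discrete-inner-product} gives $S\V{G}=\sum_{s=1}^S \V{v}_s\V{v}_s^T$. The $w$-orthonormality of the $\phi_n$ yields $\E[\V{v}_s\V{v}_s^T]=\V{I}$, hence $\E[\V{G}]=\V{I}$, while each summand is positive semidefinite with operator norm $\|\V{v}_s\|_2^2=K(z_s)\le\|K\|_\infty$ almost surely. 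This is exactly the hypothesis of the matrix Chernoff inequality for sums of independent, uniformly bounded, positive-semidefinite matrices.

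Applying that inequality to $\sum_s\V{v}_s\V{v}_s^T$, whose mean is $S\V{I}$ (so both extreme mean eigenvalues equal $S$) and whose per-summand bound is $\|K\|_\infty$, each of the two events comprising $\{\vertiii{\V{G}-\V{I}}>\delta\}=\{\lambda_{\min}(\V{G})<1-\delta\}\cup\{\lambda_{\max}(\V{G})>1+\delta\}$ is controlled by a Chernoff exponent. The lower-tail event, which is the one that bounds $\lambda_{\min}(\V{G})$ and hence $\vertiii{\V{G}^{-1}}$, carries the exponent $c_\delta=\delta+(1-\delta)\log(1-\delta)$, giving a bound $N\exp(-c_\delta S/\|K\|_\infty)$; the upper-tail event is handled analogously, and a union bound over the two supplies the prefactor $2$, so $\mathrm{Pr}[\vertiii{\V{G}-\V{I}}>\delta]\le 2N\exp(-c_\delta S/\|K\|_\infty)$. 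It then remains to observe that the sampling hypothesis \eqref{eq:cohen-sample-size} is equivalent to $c_\delta S/\|K\|_\infty\ge(1+r)\log S$ and, since $\|K\|_\infty\ge N$, also forces $N\le S$; hence $c_\delta S/\|K\|_\infty\ge\log S+r\log S\ge\log N+r\log S$, i.e. $N\exp(-c_\delta S/\|K\|_\infty)\le S^{-r}$, which is exactly \eqref{eq:cohen-stability}.

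For the convergence bound I would condition on the stability event $\Omega=\{\vertiii{\V{G}-\V{I}}\le\delta\}$, taking $\delta\le\tfrac12$. On $\Omega^c$, whose probability is at most $2/S^r$ by \eqref{eq:cohen-stability}, the bounds $|f|\le L$ and $|T_L(\Pi^S f)|\le L$ give $\|f-T_L(\Pi^S f)\|_w^2\le 4L^2$, contributing at most $8L^2/S^r$. On $\Omega$ the matrix $\V{G}$ is invertible with $\vertiii{\V{G}^{-1}}\le(1-\delta)^{-1}$, so $\Pi^S$ is a well-defined linear projector fixing every element of $P$. Since $|f|\le L$ makes $T_L$ a pointwise contraction fixing $f$, one has $\|f-T_L(\Pi^S f)\|_w\le\|f-\Pi^S f\|_w$, and with $g=(I-\Pi)f\perp P$ the identity $\Pi^S f-\Pi f=\Pi^S g$ together with $L^2_w$-orthogonality gives the Pythagorean split $\|f-\Pi^S f\|_w^2=\|g\|_w^2+\|\Pi^S g\|_w^2$. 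The coefficient vector of $\Pi^S g\in P$ is $\V{G}^{-1}(\tfrac1S\V{V}^T\V{g})$, where $\V{g}$ has entries $g(z_s)$; since the $\phi_n$ are orthonormal, $\|\Pi^S g\|_w^2$ equals the squared Euclidean norm of this vector, bounded on $\Omega$ by $(1-\delta)^{-2}\|\tfrac1S\V{V}^T\V{g}\|_2^2$. The $n$-th entry of $\tfrac1S\V{V}^T\V{g}$ is the discrete inner product $\langle\phi_n,g\rangle_S$, which has mean zero precisely because $g\perp P$; replacing $\mathbf{1}_\Omega$ by $1$ in the resulting nonnegative quadratic and taking a now-unconditional expectation gives $\sum_n\E\langle\phi_n,g\rangle_S^2=\tfrac1S\sum_n\mathrm{Var}(\phi_n g)\le\tfrac1S\E[K g^2]\le\tfrac{\|K\|_\infty}{S}\|g\|_w^2$. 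Invoking \eqref{eq:cohen-sample-size} as $\|K\|_\infty/S\le c_\delta/((1+r)\log S)$ and $(1-\delta)^{-2}\le4$ then yields $\E[\|\Pi^S g\|_w^2\mathbf{1}_\Omega]\le\varepsilon(S)\|g\|_w^2$ with $\varepsilon(S)=\frac{4c_\delta}{(1+r)\log S}$; summing the $\Omega$ and $\Omega^c$ contributions produces \eqref{eq:cohen-convergence}.

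The main obstacle is the convergence step, specifically the strong correlation between the random inverse $\V{G}^{-1}$ and the random residual vector $\tfrac1S\V{V}^T\V{g}$, which prevents factoring the expectation. The key device is to perform all deterministic bounding \emph{on} $\Omega$ — replacing $\vertiii{\V{G}^{-1}}$ by its pathwise bound $(1-\delta)^{-1}$ — and only then discard the indicator from the remaining nonnegative quadratic form, so that its expectation reduces to the clean sum-of-variances estimate against $\|K\|_\infty$. A secondary point requiring care is the separate treatment of the failure event $\Omega^c$, where $\Pi^S$ may be ill-defined but the truncation $T_L$ still furnishes the deterministic bound $2L$, so the tail probability $2/S^r$ translates directly into the additive $8L^2/S^r$; and, in the stability step, the extraction of $N\le S$ from \eqref{eq:cohen-sample-size}, which is what lets the dimensional prefactor $N$ be absorbed into the $S^{-r}$ decay rate.
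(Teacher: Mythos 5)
Your proposal is correct and takes essentially the same route as the paper: this theorem is imported from \cite{cohen_stability_2013}, and the proofs the paper does write out (Theorems \ref{thm:cls-bounded-stability} and \ref{thm:bounded-convergence}, which adapt that reference to the weighted CLS setting) use precisely your ingredients --- the rank-one decomposition of $\V{G}$ with the matrix Chernoff bound and absorption of the dimensional prefactor via $N \le S$ for stability, then conditioning on the stability event, the $8L^2/S^r$ truncation bound on its complement, the Pythagorean split through $g=(I-\Pi)f$, the pathwise bound $\vertiii{\V{G}^{-1}} \le (1-\delta)^{-1}$, and the sum-of-variances estimate $\E\bigl\|\tfrac{1}{S}\V{V}^T\V{g}\bigr\|^2 \le \|K\|_\infty \|g\|_w^2 / S$ for convergence, with your mean-zero observation being exactly the specialization that makes the paper's extra discrepancy term $d(f)$ vanish when sampling from $w$ itself. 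The one caveat you share with the paper (so it is not a deviation from its argument) is that Tropp's upper Chernoff tail carries the exponent $(1+\delta)\log(1+\delta)-\delta$, which is slightly smaller than $c_\delta$, so the two-sided bound \eqref{eq:cohen-stability} strictly requires that constant, whereas only the lower tail (with exponent $c_\delta$) is actually needed for the convergence step.
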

The critical term in the ensemble size condition \eqref{eq:cohen-sample-size} is the maximum of the reproducing kernel diagonal, $\|K\|_\infty/N$. Clearly this quantity depends on (i) the weight $w$ and (ii) the space $P$. 

When $D = [-1,1]$ and $w=1/\sqrt{1-z^2}$ with $P$ the degree-$n$ polynomial space, a system of orthonormal polynomials is given by the Chebyshev polynomials $\phi_n(z) = T_n(z)$. The reproducing kernel diagonal for this one-dimensional basis satisfies
\begin{align}\label{eq:chebyshev-christoffel}
  \|K_k(z)\|_{\infty} = \sup_{z \in D} \sum_{j=0}^{k} T_j(z)^2 \equiv 2 k + 1 = 2 N - 1
\end{align}
for all $k \in \N$. Thus, so long as $S / (N \log S) \gtrsim C$ for an absolute constant $C$, then one can obtain stability from \eqref{eq:cohen-stability}. In Section \ref{sec:results} showing our numerical experiments, we call the similar scaling $S \sim N \log N$ ``log-linear" scaling of $S$ with respect to $N$.

This optimal $\|K\|_{\infty} \sim N$ behavior only happens for very special weights. Consider the interval $D = [-1,1]$ with weight function $w(z) = (1-z^2)^\beta$ for $\beta \geq 0$ with corresponding orthonormal polynomial family $\phi_n$. An understanding of how the choice of $\beta$ affects the sampling criterion can be communicated by Figure \ref{fig:maxK-plots}. We show $\|K_k\|_\infty/N$ for various combinations of degree $k$ and $\beta$ values, with the conclusion that this quantity becomes extremely large when $k$ or $\beta$ is increased, and is very large even for moderate values of these parameters. Therefore, the restriction on the number of samples required for stability becomes onerous even on bounded sets in one dimension if one considers non-Chebyshev weights. \rev{Theoretical upper bounds for $\|K\|_\infty$ in the (tensor-product) multivariate case appear in \cite{migliorati_multivariate_2015}; their univariate behavior matches that shown in Figure \ref{fig:maxK-plots}.}

\begin{figure}
\resizebox{\textwidth}{!}{
\includegraphics{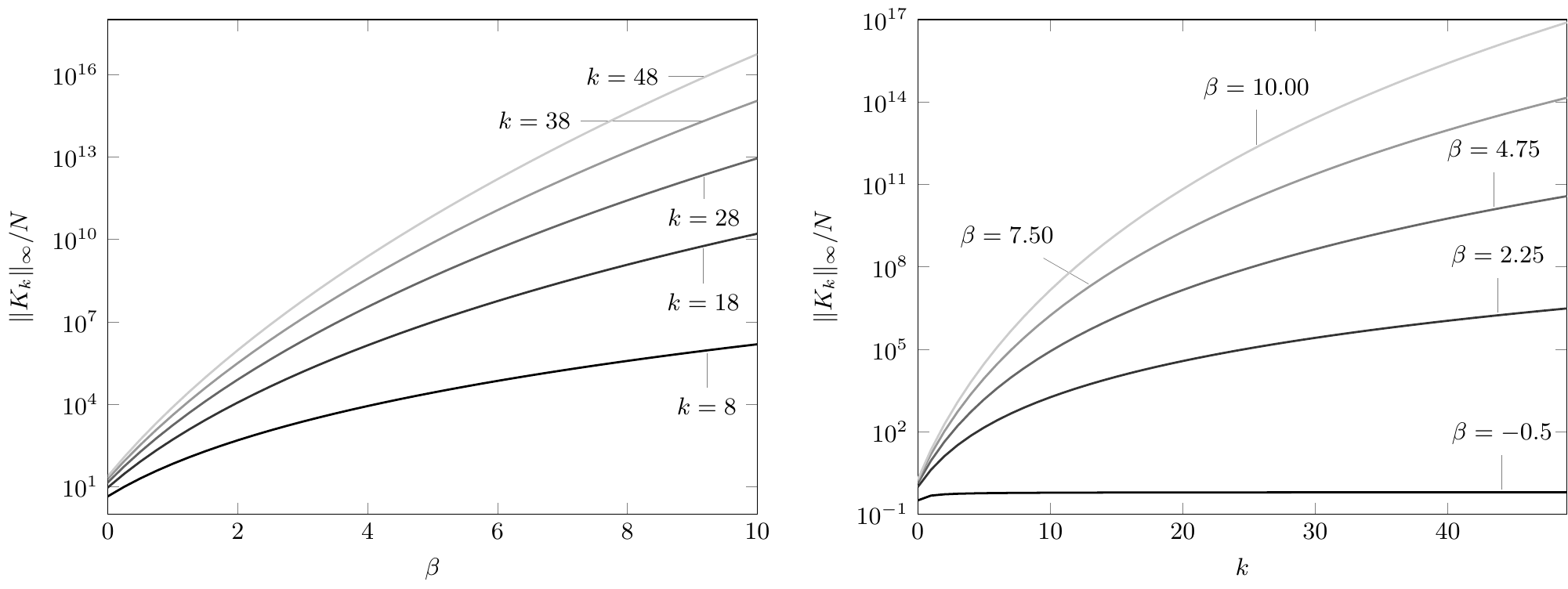}
}
\caption{Values of the stability factor $\|K_k\|_\infty/N$ with $N = \dim P_k = k+1$ for one-dimensional symmetric Jacobi polynomial families (parameters $\alpha = \beta$). Left: plots versus parameter $\beta$ for various degrees $k$. Right: plots versus degree $k$ for various parameters $\beta$.}\label{fig:maxK-plots}
\end{figure}

Given the observations above, a standard MC approach is not tractable for all weights using this analysis, and in particular for polynomials on unbounded domains such as Hermite polynomials, one cannot directly use the above analysis if $K$ is a polynomial kernel. (A straightforward remedy would be to instead use weighted polynomials, which is effectively what the CLS algorithm does.)

\subsection{Christoffel function asymptotics}\label{sec:background-equilibrium}

For polynomials, the reproducing kernel diagonal quantity $K(z)$ is the inverse of the Christoffel function from the theory of orthogonal polynomials, about which much is known. The purpose of this section is to review formal results establishing the behavior in \eqref{eq:approx-christoffel-convergence}. 

For unbounded domains we will need to discuss asymptotics of measures whose mass escapes to infinity; to make mathematically sensible statements in this case we adopt the strategy of ``compressing" these measures by a scaling factor so that their mass remains on a compact interval. One way to implement this compression is to frame the discussion with respect to a \textit{varying} weight $w^k$, i.e., $w$ raised to the power $k$. The effect of using varying weights is that the measures we are interested in place their support on compact domains.

On unbounded domains $D$ with a pluripotential-theoretic weight function $\rho = \exp(-Q)$ given, we consider polynomials orthonormal under the \textit{varying} weight function $\rho^{2 k}$ for an integer $k$. For each $k \in \N$, let $\phi^{(k)}_\alpha$ be the family orthonormal under $\rho^{2 k}$, i.e.,
\begin{align}\label{eq:phi-varying-weight}
  \int_D \phi^{(k)}_\alpha(z) \phi^{(k)}_\beta(z) \rho^{2 k}(z) \dx{z} = \delta_{\alpha,\beta}.
\end{align}
Then the reproducing kernel diagonal associated to $P_k$ with the varying weight function $\rho^{2 k}$ is given by
\begin{align*}
  K^{(k)}_k = \sum_{|\alpha| \leq k} \left(\phi^{(k)}_\alpha(z)\right)^2
\end{align*}
Note that $K^{(1)}_k \equiv K_k$, the standard $L^2_w$-kernel diagonal of $P_k$.

Asymptotics for both $K$ (on bounded domains) and $K_k^{(k)}$ (unbounded domains) are known in many cases. In one dimension on $D = [-1,1]$, the relation
\begin{align}\label{eq:1d-convergence-bounded}
  \lim_{k \rightarrow \infty} \frac{K_k(z)}{N} = \frac{1}{\pi w(z) \sqrt{1-z^2}}
\end{align}
holds for almost every $z \in D$, for any $w$ that is bounded and continuous. Similar results for $K_k^{(k)}$ on unbounded domains hold. (See, e.g., \cite{bloom_asymptotics_2008,totik_asymptotics_2000,totik_asymptotics_2000-1,nevai_geza_1986}.) Indeed, for more general $w$ and multidimensional $D$, the above result holds if one replaces $\frac{1}{\pi \sqrt{1-z^2}}$ above by the Lebesgue density of the pluripotential equilibrium measure \cite{bos_asymptotics_1994,xu_christoffel_1995,xu_asymptotics_1999,bos_asymptotics_1998,kroo_christoffel_2013}.

Since $K$ is a specialization of $K^{(k)}$ when $Q \equiv 0$, it is sufficient to consider asymptotics of $K^{(k)}$, which is the goal of the following general result.
\begin{theorem}[\cite{berman_bergman_2009,berman_bergman_2009-1,berman_fekete_2011}]\label{thm:christoffel-convergence}
  Let $D$ be a potential-theoretic admissible domain in $\R^d$ equipped with a smooth weight $\dx{V}(z) = q(z) \dx{z}$, along with a bounded and continuous weight function $\rho$ such that $\rho(z) \dx{V}(z)$ defines an orthonormal polynomial family $\phi_\alpha$ in $L^2_{q \rho}(D)$. With $K_k^{(k)}$ the $L^2_{q \rho^{2 k}}$ reproducing kernel diagonal of the total-degree polynomial space $P_k$, then the following convergence holds weakly:
  \begin{align*}
    \lim_{k \rightarrow \infty} \frac{1}{N} \rho^{2 k}(z) K^{(k)}_k(z) \dx{V}(z) = \dx{\mu_{D,Q}}(z).
  \end{align*}
\end{theorem}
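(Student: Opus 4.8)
The plan is to realize $K_k^{(k)}$ as a weighted Bergman kernel diagonal and to pass to the equilibrium measure through the logarithmic asymptotics of extremal polynomials, following the Bergman-kernel strategy of Berman. First I would record the reproducing/extremal characterization of the kernel diagonal: writing $\dx{V} = q\,\dx{z}$ and $\rho = \exp(-Q)$, for each $z$
\[ K_k^{(k)}(z) = \max\left\{ |p(z)|^2 \;:\; p \in P_k,\ \int_D |p|^2 \rho^{2k}\,\dx{V} = 1 \right\}, \]
so that the object of interest is the \emph{Bergman density} $\beta_k(z) \triangleq \rho^{2k}(z)\,K_k^{(k)}(z) = \sup_p |p(z)|^2 e^{-2kQ(z)}$, the supremum taken over $p\in P_k$ normalized in $L^2(\rho^{2k}\dx{V})$. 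Orthonormality gives $\int_D \beta_k\,\dx{V} = N$, so each $\frac1N\beta_k\,\dx{V}$ is a probability measure, as is the target $\mu_{D,Q}$; this fixes the total mass and will later forbid mass from escaping or concentrating incorrectly.

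Second, I would extract the first-order (logarithmic) asymptotics. Extending the $\phi_\alpha^{(k)}$ holomorphically, the function $u_k \triangleq \frac{1}{2k}\log K_k^{(k)}$ is plurisubharmonic on $\C^d$ of at most logarithmic growth, hence asymptotically a competitor in the class $\mathcal{L}$ defining $V_{D,Q}$. The essential analytic input is that $\rho\,\dx{V}$ enjoys a Bernstein--Markov (mass-density) property, which renders the $L^2(\rho^{2k}\dx{V})$ normalization and the weighted sup-norm normalization of degree-$k$ polynomials comparable up to subexponential factors. With this, the envelope definition of $V_{D,Q}$ as the supremum of functions in $\mathcal{L}$ dominated by $Q$ on $D$ yields
\[ \lim_{k\to\infty} u_k(z) = V_{D,Q}(z), \]
locally uniformly off an exceptional pluripolar set, and in particular in $L^1_{\mathrm{loc}}$. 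On the support of the equilibrium measure one has $V^\ast_{D,Q}=Q$, so $\frac1{2k}\log\beta_k = u_k - Q \to 0$ there.

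The final, and hardest, step is to upgrade this scalar convergence to weak convergence of measures. Since the $u_k$ are psh and converge in $L^1_{\mathrm{loc}}$ to the locally bounded $V^\ast_{D,Q}$, Bedford--Taylor continuity of the complex Monge--Amp\`ere operator gives $(dd^c u_k)^d \to (dd^c V^\ast_{D,Q})^d = (2\pi)^d\,\mu_{D,Q}$ weakly. What remains is the genuinely delicate point: identifying the weak limit of the \emph{pointwise} Bergman density $\frac1N\beta_k\,\dx{V}$ with that of the \emph{Monge--Amp\`ere} measure $\frac{1}{(2\pi)^d}(dd^c u_k)^d$. I expect this to be the main obstacle, and it is exactly where Berman's local Bergman-kernel analysis enters: one constructs peak (extremal) sections concentrated at interior points of the equilibrium support and establishes off-diagonal decay, so that near such points $\beta_k$ is modeled by a Bargmann--Fock-type kernel whose on-diagonal mass reproduces the Monge--Amp\`ere density. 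Combined with the global upper envelope bound coming from $u_k \le V_{D,Q} + o(1)$ and the fixed total mass $1$ from the first step, this pins the common weak limit of $\frac1N\beta_k\,\dx{V}$ to $\mu_{D,Q}$.
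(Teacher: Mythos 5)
This theorem is not proved in the paper at all: it is imported verbatim from the cited works \cite{berman_bergman_2009,berman_bergman_2009-1,berman_fekete_2011}, and the surrounding text (``This section recalls the two cornerstone results we require\dots'') makes clear it is background. So there is no internal proof to compare against; what you have written is, in outline, the proof strategy of the cited papers themselves: the extremal characterization $K_k^{(k)}(z) = \sup\{|p(z)|^2 : \|p\|_{L^2(\rho^{2k}dV)}=1\}$, the Bernstein--Markov comparability of $L^2(\rho^{2k}dV)$ and weighted sup norms, the logarithmic asymptotics $\frac{1}{2k}\log K_k^{(k)} \to V_{D,Q}$, and then local Bergman-kernel analysis in the bulk together with the mass normalization $\int_D \frac{1}{N}\rho^{2k}K_k^{(k)}\,dV = 1$ to pin down the weak limit. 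Your identification of the genuinely hard step --- upgrading scalar asymptotics of $u_k$ to weak convergence of the Bergman measure --- is exactly right; that is the content of Berman's local (Bargmann--Fock model) analysis.

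One step, as written, is wrong, though it is both repairable and, in the actual proof, unnecessary: the complex Monge--Amp\`ere operator is \emph{not} continuous under $L^1_{\mathrm{loc}}$ convergence of locally bounded plurisubharmonic functions, so you cannot invoke ``Bedford--Taylor continuity'' to conclude $(dd^c u_k)^d \to (dd^c V^\ast_{D,Q})^d$ from $u_k \to V^\ast_{D,Q}$ in $L^1_{\mathrm{loc}}$; Bedford--Taylor requires monotone or locally uniform convergence, and there are classical counterexamples otherwise. Berman's argument does not pass through convergence of $(dd^c u_k)^d$ at all: one instead proves pointwise asymptotics of the Bergman density $\frac{1}{N}\rho^{2k}K_k^{(k)}$ in the bulk of the contact set $\{V^\ast_{D,Q} = Q\}$, where it converges to the Monge--Amp\`ere density of $Q$, together with an asymptotic upper bound everywhere; dominated convergence plus conservation of total mass then forces weak convergence to $\mu_{D,Q}$. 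It is also worth noting that the third cited work (Berman--Boucksom--Witt Nystr\"om) reaches the same conclusion by an entirely variational route: the Bergman measure arises as the derivative of a sequence of $\log\det$ (Gram matrix) energy functionals converging to the equilibrium energy, whose derivative is $\mu_{D,Q}$, and differentiability permits exchanging limits and derivatives with no local kernel asymptotics at all. Either repair closes the gap; as a blind reconstruction of a deep cited theorem, your sketch is otherwise faithful.
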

Note that the above is a very general result, but special cases of this (especially in one dimension) have been known long before the references cited in the Theorem: See., e.g., \cite{totik_asymptotics_2000} and references therein. We will specialize the theorem above to two cases: (i) on bounded $D$, we set $\rho \equiv 1$ and $\dx{V}(z) = w(z) \dx{z}$, (ii) on unbounded $D$ with $w = \exp(-2 Q)$, we set $\rho = \exp(-Q)$ and $\dx{V}(z) = \dx{z}$. 

\begin{corollary}\label{cor:christoffel-convergence}
  The following two cases are specializations of Theorem \ref{thm:christoffel-convergence}. For both cases below, we assume that the density $v_{D,Q}$ exists.
  \begin{enumerate}
    \item Let $w$ be any continuous weight function admitting an orthogonal polynomial basis on a compact, connected set $D$. With $\rho \equiv 1$ and $\dx{V} = w(z) \dx{z}$ then
      \begin{align}\label{eq:bounded-christoffel-convergence}
        \lim_{k \rightarrow \infty} \frac{1}{N} K_k(z) = \frac{\dx{\mu_{D}}}{\dx{V}} = \frac{v_D(z)}{w(z)},
      \end{align}
      with $K_k$ the $L^2_w$ reproducing kernel diagonal for $P_k$.
    \item Let $D \subset \R^d$ be an unbounded convex cone with $w = \exp(-2Q)$, and let $\rho(z) = \sqrt{w}(z)$ and $\dx{V}(z) = \dx{z}$. 
      Then 
      \begin{align}\label{eq:unbounded-christoffel-convergence}
        \lim_{k \rightarrow \infty} \frac{1}{N} \rho^{2 k}(z) K^{(k)}_k(z) = \dfdx{\mu_{D,Q}}{z} = v_{D,Q}(z),
      \end{align}
      with $K^{(k)}_k$ the $L^2_{w^k}$ reproducing kernel diagonal for $P_k$.
  \end{enumerate}
  
\end{corollary}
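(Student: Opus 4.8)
The plan is to treat the corollary as two direct specializations of Theorem \ref{thm:christoffel-convergence}, so the bulk of the argument is checking that the stated choices of $(\rho, q, \dx{V})$ satisfy the theorem's hypotheses and then reading off the limiting density. In both cases the reference measure in the theorem is $\dx{V}(z) = q(z)\,\dx{z}$ and the asserted weak limit is $\dx{\mu_{D,Q}} = v_{D,Q}\,\dx{z}$; the corollary's identities are then obtained by dividing the Lebesgue density $\frac{1}{N}\rho^{2k}K_k^{(k)}q$ of the left-hand measure by $q$ and invoking the Radon--Nikodym chain rule $\dx{\mu_{D,Q}}/\dx{V} = v_{D,Q}/q$.

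For part (1) I would set $\rho \equiv 1$, $q = w$, so that $\dx{V} = w\,\dx{z}$ and $Q \equiv 0$. The bounded-admissible hypotheses (compact $D$ of positive Lebesgue measure and nonempty interior, $w$ continuous and admitting an orthogonal family) guarantee that $D$ is potential-theoretic admissible and that $\rho\,\dx{V} = w\,\dx{z}$ defines the $L^2_w$-orthonormal family $\phi_\alpha$; connectedness is used to ensure $\mu_D = \mu_{D,0}$ is the usual unweighted equilibrium measure with density $v_D$. Since $\rho^{2k}\equiv 1$ we have $K_k^{(k)} = K_k$, and Theorem \ref{thm:christoffel-convergence} yields $\frac{1}{N} K_k(z)\,w(z)\,\dx{z} \rightharpoonup v_D(z)\,\dx{z}$, so that formally $\frac{1}{N}K_k \to v_D/w$.

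For part (2) I would set $\rho = \sqrt{w} = \exp(-Q)$ and $q \equiv 1$, so $\dx{V} = \dx{z}$ and $\rho^{2k} = w^k$. Here the work is verifying admissibility of the \emph{weighted} problem: the conic domain $D$ together with a $t$-homogeneous $Q$ satisfying $\lim_{|z|\to\infty}Q(z)/|z|>0$ places us in the Saff--Totik admissible class, guaranteeing that $\mu_{D,Q}$ exists, is a probability measure, and has compact support despite $D$ being unbounded; boundedness and continuity of $\rho$ on $D$ are immediate from $w = \exp(-2Q)$. With $q\equiv 1$, Theorem \ref{thm:christoffel-convergence} reads $\frac{1}{N} w^k(z)K_k^{(k)}(z)\,\dx{z} \rightharpoonup v_{D,Q}(z)\,\dx{z}$, which is exactly \eqref{eq:unbounded-christoffel-convergence}.

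The hard part will be the passage from the \emph{weak} convergence of measures supplied by Theorem \ref{thm:christoffel-convergence} to the pointwise (a.e.) equalities written in the corollary, and in particular the legitimacy of ``dividing by $q$'' in part (1). Weak convergence tests only against bounded continuous functions, so when $w$ degenerates on $\partial D$ (e.g.\ Jacobi weights $(1-z^2)^\beta$) the factor $1/w$ is unbounded and the division is not justified by weak convergence alone. To obtain the genuine a.e.\ statement I would instead invoke the stronger pointwise Christoffel-function asymptotics available under the stated continuity hypotheses (the M\'at\'e--Nevai--Totik and Totik-type results, and their multivariate analogues cited around \eqref{eq:1d-convergence-bounded}), using Theorem \ref{thm:christoffel-convergence} only to \emph{identify} the limiting density as the equilibrium density $v_{D,Q}$; on the support where $v_{D,Q}>0$ the two viewpoints agree and yield the claimed ratio.
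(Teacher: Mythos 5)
Your proposal matches the paper's own treatment: the corollary is established there purely by substituting $(\rho \equiv 1,\ \dx{V} = w(z)\,\dx{z})$ in the bounded case and $(\rho = \sqrt{w},\ \dx{V} = \dx{z})$ in the unbounded case into Theorem \ref{thm:christoffel-convergence} and reading off the limiting densities, exactly as in your first three paragraphs. Your final paragraph's concern about passing from weak convergence of measures to a.e.\ convergence of densities (the ``division by $w$'' issue) is legitimate but goes beyond what the paper does --- the paper leaves the limits in \eqref{eq:bounded-christoffel-convergence} and \eqref{eq:unbounded-christoffel-convergence} to be understood weakly, and indeed only ever invokes them as weak convergence of $\widetilde{w}$ to $w$ in the later propositions, so the extra M\'at\'e--Nevai--Totik-type pointwise input you propose is a strengthening rather than a necessity for the paper's purposes.
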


\subsection{Optimal measures}\label{sec:background-optimal-measures}
The discussion of this section is not directly related to the goal of this paper, but this brief diversion provides the following useful message: If one chooses to perform an unweighted Monte Carlo least-squares approximation (Algorithm \ref{alg:unweighted-mc}) with a polynomial subspace $P_k$, then sampling according to the equilibrium measure ($v = v_D$ for bounded $D$, $v = v_{D,Q}(k^{1/t}z )$ for unbounded $D$) gives the $k$-asymptotically optimal sampling criterion in the sense of Theorem \ref{thm:ls-stability}. This observation is essentially a corollary on a convergence result of ``optimal measures" as presented in \cite{bloom_convergence_2010}. \rev{The outline of this section is as follows:
  \begin{itemize}
    \item For any given probability measure\footnote{In this paper we are mainly interested in measures with Lebesgue densities $w$, but here we generalize by considering measures $\mu$ that are not necessarily absolutely continuous with respect to Lebesgue measure.} $\mu$, the standard Monte Carlo procedure (Algorithm \ref{alg:unweighted-mc}) has a required sample count criterion for stability and accuracy (Theorem \ref{thm:ls-stability}).
    \item One can ask ``for which $\mu$ is the procedure most stable?" and, because of \eqref{eq:cohen-sample-size}, this stability can be quantified by minimizing the maximum of the $\mu$-reproducing kernel diagonal $K$. The most stable measure will depend on the polynomial space $P$ considered, and it is natural to consider total degree spaces $P_k$.
    \item This leads to a definition of an ``optimal" measure $\mu_k$ corresponding to each degree, which is considered in \cite{bloom_convergence_2010}.
    \item Any sequence of optimal measures converges to the equilibrium measure. This motivates the message that, \textit{asymptotically} in $k$, the equilibrium measure is the most stable measure from which to perform discrete least-squares polynomial approximation.
  \end{itemize}
  We note that the above does not imply anything about approximation quality; in practice one is interested in approximation in a specific norm and is not terribly concerned about the defined optimal measures above. We only seek to point out that, asymptotically in $k$, the most stable recontruction procedure would result from reconstruction according to the equilibrium measure, and therefore is a heuristic motivation for the CLS sampling choice.
}


On a compact domain $D \subset \R^d$, consider approximation on the total degree space $P_k$. Let $\phi_\alpha(z;\mu)$ denote the orthonormal polynomials for $P_k$ under the $L^2$ norm with the probability measure $\mu$ on $D$. We are interested in choosing $\mu$ to optimize stability for a discrete least-squares problem. Thus, we consider the maximum value of the reproducing kernel diagonal as a function of the measure $\mu$:
\begin{align*}
  \kappa_k(\mu) &= \max_{z \in D} \sum_{|\alpha| \leq k} \phi^2_\alpha(z;\mu), &
  \int_D \phi_\alpha(z; \mu) \phi_\beta(z; \mu) \dx{\mu(z)} = \delta_{\alpha,\beta}
\end{align*}
If we ask for the $\mu$ that minimizes $\kappa_k(\mu)$ over all probability measures, this leads to the notion of \textit{optimal measures} as defined in \cite{bloom_convergence_2010}. Following the work in \cite{bloom_convergence_2010}, a measure $\mu_k$ is \textit{optimal} for $D$ and $P_k$ if, for all probability measures $\mu$ on $D$,
\begin{align*}
  \kappa_k(\mu_k) \leq \kappa_k(\mu).
\end{align*}
Algorithm \ref{alg:unweighted-mc} is most efficient for approximation with $P_k$ in the sense of Theorem \ref{thm:ls-stability} and condition \eqref{eq:cohen-sample-size} when the weight $w$ corresponds to the measure $\mu_k$, because this choice of measure produces the minimal value of $\kappa_k = \|K_k\|_\infty$.

\rev{The notion of an optimal measure changes slightly for the weighted, unbounded case: we essentially need to build the exponentially-decaying part of the weight (i.e., $\exp(-2 Q)$) into the least-squares problem in an intrinsic way, and then ask for the (optimal) measure that results in the most stable procedure for this exponentially-weighted least-squares problem.}
We let the exponentially-decaying part of the problem be defined by $\sqrt{w} = \rho = \exp(-Q)$ on an unbounded $D$. By building in the weights $\rho^{2 k}$ into the least-squares formulation, then the appropriate version of the quantity $\|K_k\|_\infty$ is given by $\sup_{z\in D} \rho^{2 k}(z) \sum_\alpha \left[\phi^{(k)}_{\alpha}\right]^2(z)$, where the $\phi^{(k)}$ are $\rho^{2 k}$-orthonormal as defined in \eqref{eq:phi-varying-weight}. Similar to the bounded case, we proceed to replace the measure $\rho^{2 k}(z) \dx{z}$ in \eqref{eq:phi-varying-weight} with $\rho^{2 k}(z) \dx{\mu(z)}$ for some probability measure $\mu$ on $D$, and define the resulting kernel maximum:
\begin{align*}
  \kappa_{k,\rho}(\mu) &= \max_{z \in D} \rho^{2 k}(z) \sum_{|\alpha| \leq k} \left[\phi^{(k)}_\alpha(z;\mu)\right]^2 
\end{align*}
The measure $\mu_k$ is an optimal measure for $D$ with weight $\rho$ if, for all probability measures $\mu$ on $D$:
\begin{align*}
  \kappa_{k,\rho}(\mu_k) \leq \kappa_{k,\rho}(\mu)
\end{align*}
Again this notion of an optimal measure indicates which sampling measure produces the smallest sampling size requirement in \eqref{eq:cohen-sample-size}.

In either the bounded or unbounded case we want to sample from $\mu_k$ to achieve an optimal stability factor. The following main result from \cite{bloom_convergence_2010} indicates that, as the polynomial degree $k$ tends to infinity, any sequence of (weighted) optimal measures converges to the (weighted) equilibrium measure.
\begin{theorem}[\cite{bloom_convergence_2010}]\label{thm:optimal-measures}
  Let $\mu_k$ be an optimal measure for $P_k$ on $D$ with weight $\rho = \exp(-Q)$. We have (i) for each $\mu_k$, $\kappa_{k,\rho} = N$ $\mu_{D,Q}\textrm{-}a.e.$, and (ii) $\lim_{k\rightarrow\infty} \mu_k = \mu_{D,Q}$ weakly.
\end{theorem}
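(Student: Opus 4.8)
The plan is to recognize the defining property of $\mu_k$ as a convex \emph{optimal design} problem, prove (i) by the Kiefer--Wolfowitz equivalence between $G$- and $D$-optimality, and then obtain (ii) by passing to the $k\to\infty$ limit through weighted pluripotential energy. Fix any basis $\{e_n\}_{n=1}^N$ of $P_k$, write $E(z) = (e_1(z),\dots,e_N(z))^T$, and let $G(\mu) = \int_D \rho^{2k}(z)\, E(z) E(z)^T \dx{\mu}(z)$ be the weighted Gram matrix. Orthonormalizing $E$ under $\rho^{2k}\dx{\mu}$ produces the $\phi^{(k)}_\alpha(\cdot\,;\mu)$, and a short computation (Cholesky factorization of $G$) yields the variance-function representation
\begin{align*}
  \rho^{2k}(z)\, K^{(k)}_k(z;\mu) = \rho^{2k}(z)\, E(z)^T G(\mu)^{-1} E(z) \triangleq B(z;\mu),
\end{align*}
so that $\kappa_{k,\rho}(\mu) = \max_{z\in D} B(z;\mu)$ for every $\mu$ with $G(\mu)$ invertible (which includes any optimal $\mu$). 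Integrating against $\mu$ and using the cyclic property of the trace gives $\int_D B(z;\mu)\,\dx{\mu}(z) = \mathrm{tr}\big(G(\mu)^{-1} G(\mu)\big) = N$; since the $\mu$-average of $B$ equals $N$, its maximum is at least $N$, so $\kappa_{k,\rho}(\mu) \ge N$ for all probability measures $\mu$. This is the lower bound an optimal measure must meet with equality.

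For (i) I would exploit the concavity of $\mu \mapsto \log\det G(\mu)$ on the convex set of probability measures. The key computation is its Gâteaux derivative in the direction of a point mass: with $\mu_t = (1-t)\mu + t\,\delta_z$,
\begin{align*}
  \left.\frac{d}{dt}\right|_{t=0} \log\det G(\mu_t) = \mathrm{tr}\Big( G(\mu)^{-1}\big[\rho^{2k}(z)E(z)E(z)^T - G(\mu)\big]\Big) = B(z;\mu) - N.
\end{align*}
A maximizer $\mu^\ast$ of $\log\det G$ is therefore characterized by the first-order condition that this derivative be $\le 0$ for every perturbing $\delta_z$, i.e. $B(z;\mu^\ast) \le N$ for all $z \in D$. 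Combined with the average identity $\int_D B(\cdot\,;\mu^\ast)\,\dx{\mu^\ast} = N$, this forces $B(\cdot\,;\mu^\ast) = N$ $\mu^\ast$-a.e.\ and $\max_{z} B(z;\mu^\ast) = N$. Hence a $\det G$-maximizer attains $\kappa_{k,\rho} = N$, the minimal value of $\kappa_{k,\rho}$ is exactly $N$, and every optimal $\mu_k$ inherits the flat-kernel property $\rho^{2k}K^{(k)}_k(\cdot\,;\mu_k) = N$ almost everywhere, which is assertion (i).

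For (ii) I would connect the determinant functional to weighted energy and invoke the pluripotential convergence theory underlying Theorem \ref{thm:christoffel-convergence}. The $\det G$-maximizers are continuous, measure-valued analogues of weighted Fekete points, and with the standard transfinite-diameter normalization (a constant $c_k$ of order $kN$) one expects $\tfrac{1}{c_k}\log\det G(\mu_k)$ to converge to $-\mathcal{E}(\mu_{D,Q})$, where $\mathcal{E}$ is the weighted logarithmic (pluripotential) energy whose unique minimizer is the equilibrium measure $\mu_{D,Q}$ \cite{berman_bergman_2009,berman_bergman_2009-1}. The argument is then a weak-compactness scheme: the supports remain in a fixed compact set (that of $\mu_{D,Q}$, by the decay of $\rho$), so $\{\mu_k\}$ is weak-$\ast$ precompact; extract a limit $\nu$; use matching lower and upper energy estimates to identify $\lim_k \tfrac{1}{c_k}\log\det G(\mu_k)$ with $-\mathcal{E}(\nu)$, and the extremality of each $\mu_k$ to conclude $\mathcal{E}(\nu) = \min_\mu \mathcal{E}(\mu)$; uniqueness of the energy minimizer then gives $\nu = \mu_{D,Q}$. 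Since all subsequential limits coincide, the full sequence converges weakly to $\mu_{D,Q}$.

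The elementary part is (i): the variance-function identity together with the concavity of $\log\det G$ reduces the Kiefer--Wolfowitz characterization to a one-line optimality condition. The main obstacle is (ii). The finite-$k$ extremal problem is genuinely non-local and degree-dependent, so the crux is to control, uniformly over measures, the asymptotics of the extremal Gram determinant (a weighted transfinite diameter) and to match its $\Gamma$-limit with the weighted equilibrium energy. This is exactly where the heavy analytic input of Berman--Boucksom theory---the differentiability and strict convexity of $\mathcal{E}$ and the identification of its minimizer via the Monge--Amp\`ere equation for $V^\ast_{D,Q}$---is indispensable.
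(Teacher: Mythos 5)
The paper offers no proof of Theorem \ref{thm:optimal-measures} at all --- it is imported verbatim from \cite{bloom_convergence_2010} --- so there is no internal argument to compare against, and your proposal must be judged as a reconstruction of the cited reference. Your part (i) is correct and essentially complete, and it is the standard Kiefer--Wolfowitz argument: the representation $\rho^{2k}(z)K^{(k)}_k(z;\mu)=\rho^{2k}(z)E(z)^T G(\mu)^{-1}E(z)$, the trace identity $\int_D B(z;\mu)\dx{\mu}(z)=N$ (hence $\kappa_{k,\rho}\geq N$ for every probability measure), and the first-order condition for the concave functional $\mu\mapsto\log\det G(\mu)$ forcing $B(\cdot;\mu^\ast)\leq N$ everywhere at a maximizer. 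Two points should be made explicit: (a) existence of a $\det G$-maximizer (weak-$\ast$ compactness on compact $D$, plus tightness from the decay of $\rho^{2k}$ in the unbounded case), since your proof that the minimal value of $\kappa_{k,\rho}$ equals $N$ routes through that maximizer; and (b) the converse Kiefer--Wolfowitz direction --- a $\kappa$-minimizer satisfies $B\leq N$ everywhere and is therefore, by concavity, also a $\det G$-maximizer --- because part (ii) needs D-optimality of the $\mu_k$ to control their Gram determinants. Note also that the theorem's phrase ``$\kappa_{k,\rho}=N$ $\mu_{D,Q}$-a.e.'' is a garbled rendering of what you correctly prove: $\rho^{2k}K^{(k)}_k(\cdot;\mu_k)=N$ holds $\mu_k$-a.e.\ and $\kappa_{k,\rho}(\mu_k)=N$.

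Part (ii), however, contains a step that would fail as written. Your scheme --- extract a weak-$\ast$ limit $\nu$, ``use matching lower and upper energy estimates'' to identify $\lim_k c_k^{-1}\log\det G(\mu_k)$ with $-\mathcal{E}(\nu)$, then conclude by uniqueness of the energy minimizer --- is the classical principle-of-descent argument from one-dimensional logarithmic potential theory, and it does not survive in $d\geq 2$: the pluripotential (Monge--Amp\`ere) energy is not a double integral against a kernel, and no semicontinuity statement lets you evaluate the limit of the normalized log-determinants at the limit measure $\nu$. This obstruction is precisely why convergence of Fekete points and of optimal measures was open until Berman--Boucksom and Berman--Boucksom--Witt Nystr\"om. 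The actual proof in \cite{bloom_convergence_2010} circumvents your step as follows: D-optimality pins the Gram-determinant asymptotics of $\mu_k$ at the weighted transfinite diameter, and the differentiability of the energy functional (the deep input from \cite{berman_bergman_2009,berman_bergman_2009-1}) upgrades convergence of these scalar quantities to weak-$\ast$ convergence of the Bergman measures $N^{-1}\rho^{2k}K^{(k)}_k(\cdot;\mu_k)\dx{\mu_k}\rightarrow\mu_{D,Q}$; then part (i) is used a second time, in an essential way, since it says the Bergman measure of an optimal measure \emph{is} $\mu_k$ itself, so Bergman-measure convergence is literally the desired conclusion. Your closing paragraph does flag differentiability as indispensable, so you have located the right ingredient, but the compactness-plus-uniqueness scheme in your middle paragraph needs to be replaced by this mechanism; as it stands, (ii) is an outline with a hole exactly at its crux.
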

Note that the above result holds also in the unweighted case $Q \equiv 0$. While computing $\mu_k$ for each $k$ will not be tractable in most situations, the result indicates that the optimal sampling measure for these least-squares problems must \textit{asymptotically} be $\mu_D$ in the bounded domain case, or a scaled version of $\mu_{D,Q}$ for the unbounded case. \rev{This result does not imply any optimality for a fixed $k$, and so in principle sampling with the equilibrium measure may be quite suboptimal if $k$ is small enough so that $\mu_k$ deviates significantly from $\mu_{D,Q}$.}

Note also that Theorem \ref{thm:optimal-measures} indicates that the stability factor $\kappa_k$ \textit{asymptotically} attains its optimal (minimal) value of $N$, which, according to Theorem \ref{thm:ls-stability} results in \textit{asymptotically} simple log-linear scaling of $S$ with respect to $N$, the best possible sample count criterion in the sense of Theorem \ref{thm:ls-stability}. \rev{Therefore, if $k$ could be taken very large, then sampling with the equilibrium measure will eventually produce a near-optimal $S \log S \gtrsim N$ sampling criterion for stability. While this seems promising since the asymptotic result is dimension-independent, it is not clear how large $k$ must be relative to $d$ to see this asymptotic behavior. In addition, it is computationally infeasible to use large $k$ for high-dimensional simulations since $\dim P_k \sim k^d$.}

\section{Asymptotics of the Christoffel Least Squares algorithm}\label{sec:cls-theory}
This section concentrates on showing that the limiting behavior of Algorithms \ref{alg:cls-bounded} and \ref{alg:cls-unbounded} is stable and accurate. Our estimates depend on a discrepancy measure between the orthogonality weight $w$ and the effective CLS weight $\widetilde{w} \triangleq v \frac{N}{K}$, with $v$ the sampling density prescribed in Sections \ref{sec:cls-bounded} or \ref{sec:cls-unbounded}. Our convergence analysis is less constructive than the stability analysis, because the former depends on a $w$ versus $\widetilde{w}$ reprojection error, which is not easily computable.

\rev{We recall some of our notation in Table \ref{tab:notation} for clarity: $\Lambda$ is a multi-index set that defines a polynomial subpsace $P$, its $L^2_w$ reproducing kernel diagonal $K$, the $L^2_w$-orthogonal projector $\Pi$ whose range is $P$, the $L^2_{\widetilde{w}}$-orthogonal projector $\widetilde{\Pi}$ whose range is $P$, and the $L^2_{\widetilde{w}}$ Gramian matrix of the $\phi_n$, $\V{R}$. When $\Lambda = \Lambda_k$ corresponds to the polynomial space of total degree $k$, we use the abbreviated versions $P_k$, $\Pi_k$, $\widetilde{\Pi}_k$, and $\V{R}_k$.}

\subsection{CLS for bounded domains}\label{cls-proof-bounded}
We assume the pair $(D,w)$ are bounded-admissible in the sense of Section \ref{sec:setup}. Our results in this section and the one immediately following are essentially adaptations of the results reproduced in Sections \ref{sec:background-ls} and \ref{sec:background-equilibrium}. This section deals with compact domains.

The CLS framework is a weighted least-squares formulation; alternatively, we may consider it an unweighted least-squares problem with the non-polynomial functions
\begin{align}\label{eq:psi-def}
  \psi_\alpha(z) = \frac{\sqrt{N} \phi_\alpha(z)}{\sqrt{\sum_{\alpha \in \Lambda} \phi_\alpha^2(z)}},
\end{align}
followed by sampling with the equilibrium measure weight $v(z)\dx{z} = \dx{\mu_D}(z)$. This essentially uses a modified weight function that approximates $w$:
\begin{align*}
  \widetilde{w}(z) = \frac{N}{K(z)} v_D(z)
\end{align*}
The functions $\psi_\alpha$ are not exactly orthogonal with respect to the weight $v_D(z)$, and we will need a quantification of this non-orthogonality behavior as a function of the index set $\Lambda$. For a given $w$ and fixed index set $\Lambda$, define
\begin{align}\label{eq:qn-def}
  (R)_{\alpha,\beta} &= \int_D \psi_\alpha(z) \psi_\beta(z) \dx{\mu_D} = \int_{D} \phi_\alpha(z) \phi_\beta(z) \widetilde{w}(z) \dx{z},
\end{align}
so that the $N \times N$ matrix $\V{R}$ is defined. 
We emphasize that the functions $\psi_\alpha$ and the weight $\widetilde{w}$ depend on $\Lambda$. We use the notation $\V{R} = \V{R}_k$ to denote the special case of $\Lambda = \Lambda_k$. Owing to asymptotics of the Christoffel function, any fixed $(\alpha,\beta)$ entry of the matrix $\V{R}_k$ converges to the corresponding entry of the identity matrix.
\begin{proposition}
  For any fixed $\alpha, \beta$, the quantity in \eqref{eq:qn-def} for the polynomial space $P_k$ satisfies
  \begin{align*}
    \lim_{k \rightarrow \infty} (R_k)_{\alpha,\beta} = \delta_{\alpha,\beta}
  \end{align*}
\end{proposition}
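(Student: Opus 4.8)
The plan is to pass to the limit directly in the integral representation \eqref{eq:qn-def}. Writing $\widetilde{w}(z) = \frac{N}{K_k(z)}\, v_D(z)$ for the effective CLS weight associated with the space $P_k$, the entry in question is
\[
  (R_k)_{\alpha,\beta} = \int_D \phi_\alpha(z)\,\phi_\beta(z)\,\widetilde{w}(z)\,\dx{z},
\]
where the polynomials $\phi_\alpha,\phi_\beta$ are \emph{fixed} once $k \geq \max\{|\alpha|,|\beta|\}$. By part (1) of Corollary \ref{cor:christoffel-convergence}, equation \eqref{eq:bounded-christoffel-convergence}, we have $\tfrac{1}{N}K_k(z) \to v_D(z)/w(z)$ almost everywhere, hence $\widetilde{w}(z) = \tfrac{N}{K_k(z)} v_D(z) \to w(z)$ a.e. in $D$. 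The integrand therefore converges pointwise a.e. to $\phi_\alpha(z)\,\phi_\beta(z)\,w(z)$, whose integral equals $\delta_{\alpha,\beta}$ by $L^2_w$-orthonormality of the $\phi_\alpha$. The entire content of the proposition is thus the justification of the interchange of limit and integral.

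The cleanest route is the dominated convergence theorem, and it succeeds provided one can supply a $k$-independent integrable majorant. The natural candidate rests on a uniform Christoffel bound: if $\sup_k \sup_{z\in D} N/K_k(z) =: M < \infty$ (equivalently a uniform lower bound $K_k(z) \geq N/M$), then, since the $\phi_\alpha$ are polynomials bounded on the compact set $D$,
\[
  \left| \phi_\alpha(z)\,\phi_\beta(z)\,\widetilde{w}(z) \right| \;\le\; \|\phi_\alpha\|_\infty \|\phi_\beta\|_\infty \, M \, v_D(z),
\]
and the right-hand side is a fixed function in $L^1(D)$ because $v_D$ is a probability density. The dominated convergence theorem then applies verbatim and yields $(R_k)_{\alpha,\beta}\to\delta_{\alpha,\beta}$ at once, for both diagonal and off-diagonal entries.

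The step I expect to be the main obstacle is precisely the uniform bound $\sup_k \|N/K_k\|_{L^\infty(D)}<\infty$, because the normalization $N=\dim P_k$ grows without bound and crude pointwise estimates such as $2|\phi_\alpha\phi_\beta|\le \phi_\alpha^2+\phi_\beta^2 \le K_k$ only give $|\phi_\alpha\phi_\beta\widetilde{w}|\le \tfrac{N}{2}v_D$, whose integral diverges. The pointwise limit $w/v_D$ is bounded under the admissibility hypotheses (for instance, on $[-1,1]$ it equals $\pi w(z)\sqrt{1-z^2}$, which is bounded when $w$ is), so the difficulty is entirely one of making the Christoffel asymptotics \eqref{eq:bounded-christoffel-convergence} uniform, with the behavior near $\partial D$ being the delicate region. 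Should a global uniform bound be unavailable, I would instead argue by a split of $D$: on compact subsets of the interior the asymptotics can be upgraded to locally uniform convergence, giving $\int_{D_1}\phi_\alpha\phi_\beta\widetilde{w}\to\int_{D_1}\phi_\alpha\phi_\beta w$; on a thin neighborhood $D_2$ of $\partial D$ one controls the tail $\int_{D_2}\widetilde{w}\,\dx{z}$ uniformly in $k$ via the extremal characterization $1/K_k(z_0)=\min\{\|p\|_w^2 : p\in P_k,\ p(z_0)=1\}$ and the integrability of $v_D$. For the diagonal entries Fatou's lemma already gives the lower bound $\liminf_k (R_k)_{\alpha,\alpha}\ge \int_D \phi_\alpha^2 w\,\dx{z}=1$, so only the matching upper bound requires this uniform-integrability control; the off-diagonal entries then follow from the diagonal ones through the inequality $|\phi_\alpha\phi_\beta|\le\tfrac12(\phi_\alpha^2+\phi_\beta^2)$ and a generalized dominated convergence (Pratt-type) argument, completing the proof.
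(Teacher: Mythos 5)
Your reduction of the proposition to an exchange of limit and integral is accurate, and your pointwise identification $\widetilde{w} \to w$ matches the paper's use of \eqref{eq:bounded-christoffel-convergence}. However, your primary argument has a genuine gap: the majorant you propose does not exist under the paper's admissibility hypotheses. Admissibility only requires $w$ to be continuous on the \emph{interior} of $D$ with $0 < \int_D p^2 w \,\dx{z} < \infty$; it does not require $w$ to be bounded. Take $D = [-1,1]$ and $w(z) = (1-z^2)^{\gamma}$ with $\gamma \in (-1,-\tfrac{1}{2})$. Then the pointwise limit $w/v_D = \pi (1-z^2)^{\gamma + 1/2}$ is unbounded at the endpoints, and correspondingly $K_k(\pm 1) \asymp k^{2\gamma+2}$ gives $N/K_k(\pm 1) \asymp k^{-2\gamma-1} \to \infty$, so $\sup_k \| N/K_k \|_{L^\infty(D)} = \infty$. (Your parenthetical ``which is bounded when $w$ is'' silently assumes a boundedness of $w$ that the setup does not grant.) Thus dominated convergence with the majorant $\|\phi_\alpha\|_\infty \|\phi_\beta\|_\infty\, M\, v_D$ cannot be run as stated. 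Your fallback --- locally uniform convergence on interior compacts plus uniform-in-$k$ tail control near $\partial D$ via the extremal characterization, finishing with Fatou and a Pratt-type argument --- has the right shape, but it is a plan rather than a proof: the locally uniform upgrade and the boundary tail bound are precisely the hard analytic steps, neither is executed, and neither follows from the results the paper cites (Theorem \ref{thm:christoffel-convergence} supplies only weak-$\ast$ convergence of measures in the multivariate setting).

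By contrast, the paper's proof takes a shorter, different route that sidesteps domination entirely: it reads Corollary \ref{cor:christoffel-convergence} as asserting that the measures $\widetilde{w}\,\dx{z} = \frac{N}{K_k} v_D \,\dx{z}$ converge \emph{weakly} to $w\,\dx{z}$, and then simply tests this weak convergence against $\phi_\alpha \phi_\beta$, a fixed continuous bounded function on the compact set $D$. Your instinct that something needs justification here is legitimate --- Theorem \ref{thm:christoffel-convergence} literally gives weak convergence of $\frac{K_k}{N} w\,\dx{z} \to v_D\,\dx{z}$, and passing to the ``reciprocal'' measures $\frac{N}{K_k} v_D\,\dx{z} \to w\,\dx{z}$ is not a formal consequence --- but the paper resolves this by appeal to the corollary's statement, not by a pointwise-plus-domination argument, and your specific attempt to supply the missing domination fails in the stated generality.
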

\begin{proof}
  The entries of $\V{R}_k$ are given by 
  \begin{align*}
    (R_k)_{\alpha,\beta} = \int_D \phi_\alpha(z) \phi_\beta(z) \frac{N}{K_k(z)} v(z) \dx{z}.
  \end{align*}
  The result \eqref{eq:bounded-christoffel-convergence} implies that $\widetilde{w} = \frac{N}{K_k(z)} v(z)$ converges to $w(z)$ weakly, so that for fixed $\alpha$, $\beta$,
  \begin{align*}
    (R_k)_{\alpha,\beta} \rightarrow \int_D \phi_\alpha \phi_\beta w\, \dx{z} = \delta_{\alpha,\beta}.
  \end{align*}
\end{proof}
The above is an asymptotic result indicating that individual terms of the matrix $\V{R}_k$ behave like terms of the identity. However, this cannot be used to conclude that $\V{R}_k$ is close to the identity matrix in, e.g., the induced $\ell^2$ norm for increasing $k$ since the size of $\V{R}_k$ also increases with $k$. To illustrate this, we compile results for the one-dimensional domains $D = [-1,1]$ with symmetric Jacobi weights $w(z) = (1-z^2)^\alpha$ in the left-hand pane of Figure \ref{fig:qn-error}. These results alone cannot even be used to conclude that $d=1$ cases for $\V{R}_k$ are well-conditioned. However, one can combine Figure \ref{fig:qn-error} with Figure \ref{fig:qn-mineig} to see that in fact the $\V{R}_k$ are relatively well-behaved.
\begin{figure}
\begin{center}
  \includegraphics[width=\textwidth]{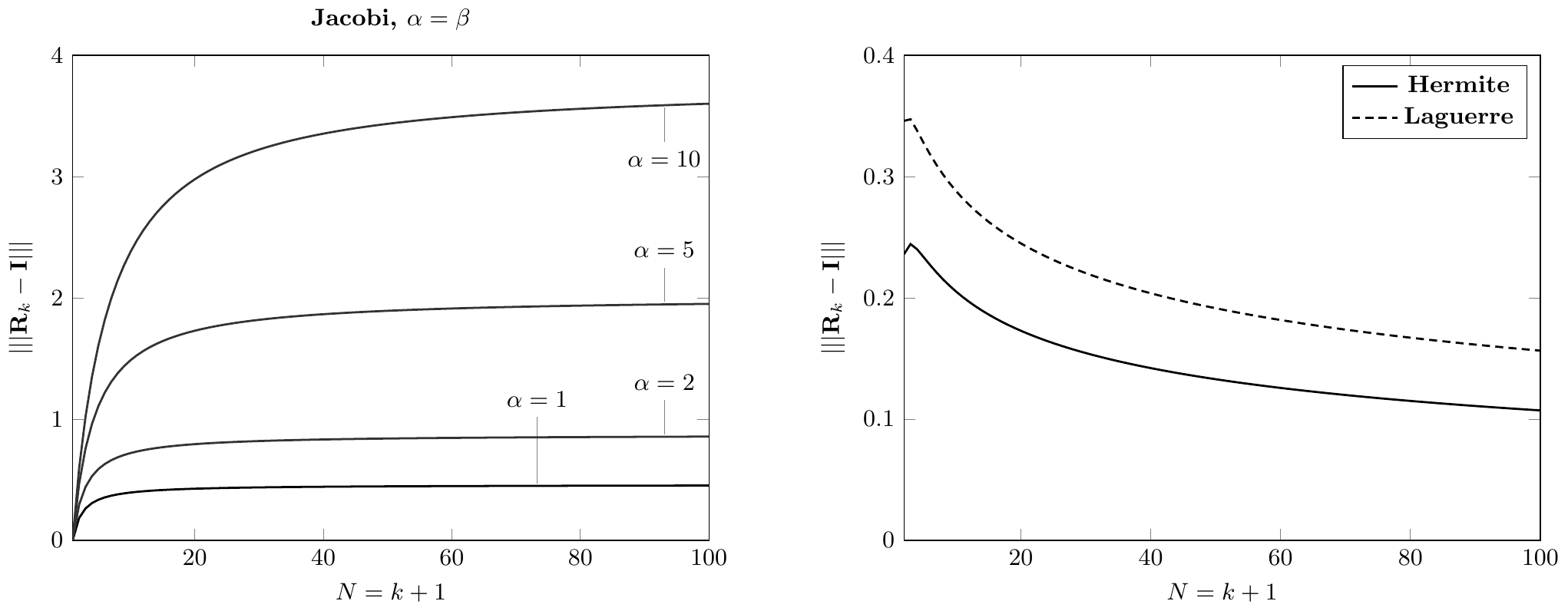}
\end{center}
\caption{Deviation of the matrix $\V{R}_k$ from the $N \times N$ identity for one-dimensional domains $D$. Left: bounded domain $D = [-1,1]$ with $\V{R}_k$ defined in \eqref{eq:qn-def} with one-dimensional orthogonality density $w(z) = (1-z^2)^\alpha$. Right: unbounded domain $D$ with weights $w(z) = \exp(-z^2)$ and $w(z) = \exp(-z)$ defining $\V{R}_k$ as in \eqref{eq:qn-def-unbounded}.}\label{fig:qn-error}
\end{figure}

Turning to stability, since $\V{R}$ is not the identity, we can only expect the CLS normal equations matrix $\V{G}$ to converge to $\V{R}$ as the sample count increases. (Recall the definition of $\V{G}$ from \eqref{eq:normal-equations}.) The same arguments as in \cite{cohen_stability_2013} may be applied to conclude an analogue of the stability result in Theorem \ref{thm:ls-stability}: A sampling size criterion implies that the discrete Gramian $\V{G}$ of the CLS procedure is close to $\V{R}$ with high probability.
\begin{theorem}\label{thm:cls-bounded-stability}
  For a compact domain $D$ and admissible weight $w$ with index set $\Lambda$, consider the CLS algorithm, Algorithm \ref{alg:cls-bounded}. If the number of samples $S$ satisfies
  \begin{align}\label{eq:christoffel-sample-requirement}
    \frac{S}{N \log S} \geq \left[\frac{1 + r}{c_\delta}\right] \frac{1}{\lambda_{\textrm{min}}(\V{R})},
  \end{align}
  for $\delta \in (0,1)$ and $r >0$, with $c_\delta$ defined in \eqref{eq:cohen-sample-size}, then 
  \begin{align*}
    \mathrm{Pr} \left[ \frac{\vertiii{\V{G} - \V{R}}}{\vertiii{\V{R}}} > \delta \right] \leq \frac{2}{S^r}
  \end{align*}
\end{theorem}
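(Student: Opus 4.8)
The plan is to recognize the CLS procedure as an \emph{unweighted} Monte Carlo least-squares problem for an orthonormalized version of the functions $\psi_\alpha$ in \eqref{eq:psi-def}, sampled from $v_D$, and then invoke Theorem \ref{thm:ls-stability} as a black box. First I would confirm that $\V{G}$ is an unbiased estimator of $\V{R}$. Writing $\boldsymbol{\phi}(z)$ for the vector with entries $\phi_n(z)$, each summand defining $\V{G}$ in \eqref{eq:discrete-inner-product} is $k_s\,\boldsymbol{\phi}(z_s)\boldsymbol{\phi}(z_s)^T$ with $k_s = N/K(z_s)$ and $z_s \sim v_D$; its expectation is $\int_D \frac{N}{K(z)}\boldsymbol{\phi}\boldsymbol{\phi}^T v_D \dx{z} = \int_D \boldsymbol{\phi}\boldsymbol{\phi}^T \widetilde{w}\, \dx{z} = \V{R}$ because $\widetilde{w} = \frac{N}{K} v_D$. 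Hence $\E[\V{G}] = \V{R}$.

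Since $\widetilde{w}$ is a positive weight on $D$ and the $\phi_\alpha$ are linearly independent, $\V{R}$ is symmetric positive definite and $\V{R}^{\pm 1/2}$ exist. Let $\boldsymbol{\psi} = \sqrt{N/K}\,\boldsymbol{\phi}$ be the vector of functions from \eqref{eq:psi-def}, and define the transformed family $\widetilde{\boldsymbol{\psi}} = \V{R}^{-1/2}\boldsymbol{\psi}$; by construction the $\widetilde\psi_n$ are \emph{orthonormal} in $L^2_{v_D}$, and Theorem \ref{thm:ls-stability} applies to them since it is stated for arbitrary (not necessarily polynomial) orthonormal systems. Their reproducing-kernel diagonal is $\widetilde K(z) = \widetilde{\boldsymbol{\psi}}^T\widetilde{\boldsymbol{\psi}} = \boldsymbol{\psi}(z)^T \V{R}^{-1}\boldsymbol{\psi}(z)$, and the one place the Christoffel weighting enters is the uniform bound
\[
  \|\widetilde K\|_\infty = \sup_{z\in D}\boldsymbol{\psi}(z)^T\V{R}^{-1}\boldsymbol{\psi}(z) \le \vertiii{\V{R}^{-1}}\sup_{z\in D}\|\boldsymbol{\psi}(z)\|_2^2 = \frac{N}{\lambda_{\mathrm{min}}(\V{R})},
\]
where I used $\|\boldsymbol{\psi}(z)\|_2^2 = \frac{N}{K(z)}\sum_{\alpha\in\Lambda}\phi_\alpha^2(z) = N$ for \emph{every} $z$. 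Thus the effective stability factor $\|\widetilde K\|_\infty/N$ is at most $1/\lambda_{\mathrm{min}}(\V{R})$.

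With this bound, the sampling criterion \eqref{eq:christoffel-sample-requirement} implies the hypothesis $\frac{S}{N\log S}\ge\frac{1+r}{c_\delta}\frac{\|\widetilde K\|_\infty}{N}$ of Theorem \ref{thm:ls-stability} for the orthonormal family $\widetilde\psi_n$ under sampling density $v_D$. Its conclusion is $\mathrm{Pr}[\,\vertiii{\widehat{\V{G}} - \V{I}} > \delta\,] \le 2/S^r$, where $\widehat{\V{G}}$ is the unweighted discrete Gramian of the $\widetilde\psi_n$. A direct computation gives $\widehat{\V{G}} = \V{R}^{-1/2}\V{G}\V{R}^{-1/2}$, so $\V{G} - \V{R} = \V{R}^{1/2}(\widehat{\V{G}} - \V{I})\V{R}^{1/2}$ and therefore $\vertiii{\V{G} - \V{R}} \le \vertiii{\V{R}}\,\vertiii{\widehat{\V{G}} - \V{I}}$. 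Consequently $\{\vertiii{\V{G} - \V{R}}/\vertiii{\V{R}} > \delta\}\subseteq\{\vertiii{\widehat{\V{G}} - \V{I}} > \delta\}$, and the desired probability bound transfers verbatim.

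I expect the main work to be organizational rather than analytical: the crucial estimate $\|\widetilde K\|_\infty \le N/\lambda_{\mathrm{min}}(\V{R})$ is immediate once one notices that the Christoffel weights force $\|\boldsymbol{\psi}(z)\|_2^2 \equiv N$, which is precisely the design goal of the algorithm. The genuinely technical content --- the matrix-Chernoff tail bound and the bookkeeping that converts the dimensional prefactor $N$ into the $S^{-r}$ rate (absorbing $\log N \le \log S$) --- is already packaged inside Theorem \ref{thm:ls-stability}, so no new concentration argument is needed. The only point requiring care is justifying $\V{R} \succ 0$ from admissibility of $(D,w)$, which guarantees both that $v_D$ is a genuine density and that the Gramian of the linearly independent $\phi_\alpha$ under $\widetilde w$ is nonsingular.
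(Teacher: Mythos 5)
Your proof is correct, and it reaches the conclusion by a genuinely different organization than the paper's. The paper does not invoke Theorem \ref{thm:ls-stability} as a black box: it \emph{amends} that theorem's proof, decomposing the CLS Gramian as $\V{G} = \sum_s \V{Y}_s$ with rank-one summands, conjugating to $\V{X}_s = \V{R}^{-1/2}\V{Y}_s\V{R}^{-1/2}$, bounding $\vertiii{\V{X}_s} \leq N/(S\lambda_{\mathrm{min}}(\V{R}))$, and re-running Tropp's matrix Chernoff inequality, before finishing with the same norm inequality $\vertiii{\V{G}-\V{R}} \leq \vertiii{\V{R}}\,\vertiii{\V{R}^{-1/2}\V{G}\V{R}^{-1/2}-\V{I}}$ that you use. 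The underlying algebra is isomorphic --- your bound $\|\widetilde{K}\|_\infty \leq N/\lambda_{\mathrm{min}}(\V{R})$ is exactly $S \vertiii{\V{X}_s}$, both resting on the pointwise identity $\|\boldsymbol{\psi}(z)\|_2^2 \equiv N$ forced by the Christoffel weights --- but your packaging is more modular: by orthonormalizing via $\widetilde{\boldsymbol{\psi}} = \V{R}^{-1/2}\boldsymbol{\psi}$, which is $v_D$-orthonormal with unweighted discrete Gramian $\V{R}^{-1/2}\V{G}\V{R}^{-1/2}$, you exhibit CLS as literally a standard unweighted Monte Carlo problem for a different orthonormal system, so the concentration step comes for free from Theorem \ref{thm:ls-stability} (whose statement indeed permits non-polynomial orthonormal systems). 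What the paper's re-derivation buys is self-containedness and explicit tracking of constants (e.g., where the prefactor $N \leq S$ and $c_\delta$ enter the exponential bound); what yours buys is brevity and the conceptual point that $1/\lambda_{\mathrm{min}}(\V{R})$ is nothing but an upper bound on the normalized kernel maximum of the orthonormalized system. Both arguments require $\V{R} \succ 0$, which you rightly flag as the one point needing care; it follows from the paper's standing assumption that $v_D$ exists as a Lebesgue density, so $\widetilde{w} = \frac{N}{K} v_D$ is positive on a set of positive Lebesgue measure and no nonzero polynomial can have vanishing $\widetilde{w}$-norm.
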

\begin{proof}
  The proof of Theorem \ref{thm:ls-stability} in \cite{cohen_stability_2013} is easily amended for our purposes. Since $\V{R}$ is symmetric positive-definite, its symmetric positive-definite square root $\V{R}^{1/2}$ is well-defined. Then 
  \begin{align*}
    \vertiii{\V{G} - \V{R}} &= \vertiii{ \V{R}^{1/2} \left( \V{R}^{-1/2} \V{G} \V{R}^{-1/2} - \V{I} \right) \V{R}^{1/2} } \\
                              &\leq \vertiii{ \V{R} } \vertiii{ \V{R}^{-1/2} \V{G} \V{R}^{-1/2} - \V{I}}
  \end{align*}
  And so
  \begin{align}\label{eq:thm-temp3}
    \mathrm{Pr} \left[ \frac{\vertiii{\V{G} - \V{R}}}{\vertiii{\V{R}}} > \delta \right] \leq \mathrm{Pr} \left[ \vertiii{ \V{R}^{-1/2} \V{G} \V{R}^{-1/2} - \V{I}} > \delta \right]
  \end{align}
  The CLS Gramian matrix $\V{G}$ can be decomposed into a sum of independent matrices
  \begin{align*}
    \V{G} &= \sum_{s=1}^S \V{Y}_s, & \left(\V{Y}_s \right)_{\alpha,\beta} &= \frac{N}{S} \left[ \frac{ \phi_\alpha(z_s) \phi_\beta(z_s)}{ K(z_s)} \right].
  \end{align*}
  Defining $\V{X}_s = \V{R}^{-1/2} \V{Y}_s \V{R}^{-1/2}$, then $\V{R}^{-1/2} \V{G} \V{R}^{-1/2} = \sum_s \V{X}_s$. The spectral norm of $\V{X}_s$ satisfies
  \begin{align*}
    \vertiii{\V{X}_s} \leq \vertiii{\V{R}^{-1}} \vertiii{\V{Y}_s}
  \end{align*}
  Since $\V{Y}_s$ is a rank-1 matrix formed from the outer product of $\left( \sqrt{\frac{N}{S K(z_s)}} \phi_\alpha(z_s) \right)_\alpha$ with itself, then 
  \begin{align*}
    \vertiii{\V{X}_s} \leq \frac{N}{S \lambda_\mathrm{min}(\V{R})} \frac{ \sum_{\alpha \in \Lambda} \phi^2_\alpha(z_s)}{ K(z_s) } = \frac{N}{S \lambda_\mathrm{min}(\V{R})}
  \end{align*}
  with probability 1. The summed expected value of $\V{X}_s$ yields the identity matrix: $\sum_{s=1}^S \E \V{X}_s = \V{R}^{-1/2} \V{R} \V{R}^{-1/2} = \V{I}$. 
  
  Now we can use the matrix Chernoff bound from \cite{tropp_user-friendly_2012}: for any collection of independent random matrices $\V{X}_s$ satisfying $\vertiii{\V{X}_s} \leq M$, then 
  \begin{align*}
    \mathrm{Pr} \left[ \lambda_{\textrm{min}} \left( \sum_{s=1}^S \V{X}_s \right) \leq (1 - \delta) \lambda_{\textrm{min}} \left(\sum_{s=1}^S \E \V{X}_s \right)\right] &\leq N \exp\left(-\frac{c_\delta \lambda_{\textrm{min}} \left( \sum_{s=1}^S \E \V{X}_s \right)}{M}\right), \\
    \mathrm{Pr} \left[ \lambda_{\textrm{max}} \left( \sum_{s=1}^S \V{X}_s \right) \geq (1 + \delta) \lambda_{\textrm{max}} \left(\sum_{s=1}^S \E \V{X}_s \right)\right] &\leq N \exp\left(-\frac{c_\delta \lambda_{\textrm{max}} \left( \sum_{s=1}^S \E \V{X}_s \right)}{M}\right),
  \end{align*}
  where $c_\delta = \delta + (1-\delta) \log(1 - \delta) \in (0,1)$. We use $\V{G} = \sum_s \V{X}_s$ and $\sum_s \E \V{X}_s = \V{I}$ along with the bound $M = \frac{N}{S \lambda_{\mathrm{min}}(\V{R})}$. Thus we have 
  \begin{align}\nonumber
    \mathrm{Pr} \left[ \vertiii{ \V{R}^{-1/2} \V{G} \V{R}^{-1/2} - \V{I}} \geq \delta \right] &\leq N \left[ \exp \left( - \frac{c_{\delta} S \lambda_{\textrm{min}}(\V{R})}{N} \right) + 
    \exp \left( - \frac{c_{\delta} S \lambda_{\textrm{min}}(\V{R})}{N} \right)\right] \\\label{eq:thm-temp1}
    &\leq 2 S \exp \left( - \frac{c_{\delta} S \lambda_{\textrm{min}}(\V{R})}{N} \right)
  \end{align}
  If we require \eqref{eq:christoffel-sample-requirement}, then
  \begin{align}\label{eq:thm-temp2}
    \exp \left( - \frac{c_{\delta} S \lambda_{\textrm{min}}(\V{R})}{N} \right) \leq \exp\left( - (1 + r) \log S\right) = \frac{1}{S^{1+r}}.
  \end{align}
  Combining \eqref{eq:thm-temp3}, \eqref{eq:thm-temp1}, and \eqref{eq:thm-temp2} yields the result.
\end{proof}
\begin{remark}
  The above result is a stability estimate for a general weighted least-squares approach for any biased weight $\widetilde{w} \neq w$.
\end{remark}

We emphasize that we have only established stability of the least-squares problem relative to $\V{R}$. The required sample count for stability no longer depends on the normalized polynomial reproducing kernel $K(z)/N$, but instead on $\lambda_{\textrm{min}}^{-1}(\V{R})$, which is a stability measure for $\V{R}$. (Compare \eqref{eq:cohen-sample-size} with \eqref{eq:christoffel-sample-requirement}.) In Figure \ref{fig:qn-mineig} we plot the inverse of the minimum eigenvalue for one-dimensional cases: symmetric Jacobi polynomials (with parameter $\beta > -1$) and also for Hermite and Laguerre polynomials. 

For most of these one-dimensional cases of interest with the space $P_k$, the factor $1/\lambda_{\textrm{min}}(\V{R}_k)$ is less than 2. This is in stark contrast to the results in Figure \ref{fig:maxK-plots} where the stability factor of $\frac{\|K\|_\infty}{N}$ of \eqref{eq:cohen-sample-size} from similar one-dimensional scenarios is extremely large.

While these results are promising in one dimension for a large, fixed degree $k$, they will likely deteriorate as the dimension $d$ is increased with a fixed $k$. Similarly, Figures \ref{fig:qn-mineig} and \ref{fig:qn-error} can be used to conclude that $\V{R}_k$ is quite well-conditioned for classical one-dimensional problems, but this is unlikely to persist for large dimensions.

\begin{figure}
\begin{center}
  \includegraphics[width=\textwidth]{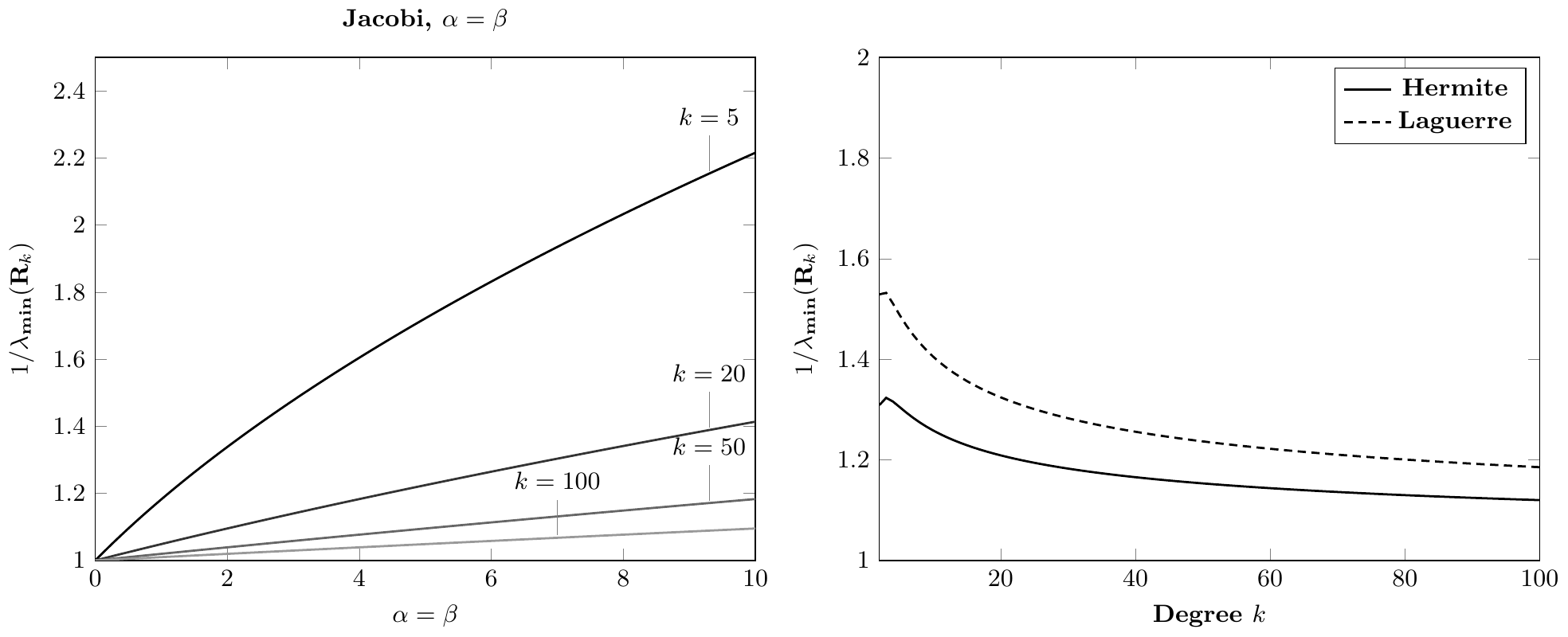}
\end{center}
\caption{Inverse of the minimum eigenvalue of the matrix $\V{R}_k$ for various one-dimensional polynomial families. Left: Jacobi polynomials with symmetric parameters $\alpha = \beta$. Right: Hermite and Laguerre polynomials.}\label{fig:qn-mineig}
\end{figure}

We are able to prove a convergence result if $|f(z)| \leq L$ for all $z \in D$. The random function $\widetilde{\Pi}^S f$ is the CLS-bounded discrete projection. We introduce a truncation of this discrete projection:
\begin{align}\label{eq:cls-estimator}
  \widetilde{f}(z) = T_L \left[\widetilde{\Pi}^S f\right],
\end{align}
where $T_L(x) = \mathrm{sgn} (x) \min\left\{|x|, L\right\}$. 
\rev{We need an additional discrepancy measure between $w$ and $\widetilde{w}$ that depends on the function $f$ being approximated. We define this as the $L^2_w$ error between $\widetilde{\Pi} f$ and $\Pi f$:
  \begin{align}\label{eq:data-discrepancy}
    d(f) \triangleq d\left(w,\widetilde{w},P;\,f\right) = \left\| \widetilde{\Pi} f - \Pi f\right\|_w = \left\|\widetilde{\Pi} \left(I - \Pi \right) f\right\|_w
  \end{align}
  Note that in all the follows we avoid explicit indication that $d$ depends on $w$, $\widetilde{w}$, and the polynomial space $P$ to limit notational clutter. Also note that, when $f \not\in P$, then the size of $d(f)$, relative to the optimal error $\|f - \Pi f\|_w$, is bounded by the operator norm of $\widetilde{\Pi}$ on the kernel of $\Pi$ in $L^2_w$:
  \begin{align*}
    d(f) \leq \|(I - \Pi)f\|_w \sup_{f \in \ker \Pi} \frac{\left\|\widetilde{\Pi}\left(I - \Pi\right) f\right\|_w}{\|(I - \Pi) f\|_w} = \left\|\widetilde{\Pi}\right\|_{P^{\perp}} \|(I - \Pi) f\|_w 
  \end{align*} 
}
Above, $P^{\perp}$ is the orthogonal complement of $P$ in $L_w^2$. It is unclear whether or not the above operator norm of $\widetilde{\Pi}$ can be computed or estimated in general situations. 

Following the arguments in \cite{cohen_stability_2013}, we can bound the error for the truncated CLS estimator.
\begin{theorem}\label{thm:bounded-convergence}
  For a given function $f$, let $\Pi f$ be the $L^2_w$ projection onto a polynomial space $P$. If the number of samples $S$ in the CLS-bounded algorithm satisfies \eqref{eq:christoffel-sample-requirement}, then the mean-square $L^2_w$ error of the truncated CLS approximation $\widetilde{f}$ defined in \eqref{eq:cls-estimator} satisfies
  \begin{subequations}
    \rev{ 
  \begin{align}\label{eq:bounded-convergence}
      \E \left[ \|f - \widetilde{f}\|_w^2 \right] &\leq \left\|f - \Pi f\right\|_w^2 + \frac{\varepsilon(S)}{\lambda_{\mathrm{min}}(\V{R})} \left\|f - \Pi f\right\|^2_{\widetilde{w}} + \frac{8 L^2}{S^r} + 4\kappa^2(\V{R}) d^2(f) 
  \end{align}
  }
  \end{subequations}
  with $\varepsilon(S) \triangleq \frac{2 - 2 \log 2}{(1+r) \log S} \rightarrow 0$ as $S \rightarrow \infty$, and $\kappa(\V{R}) = \lambda_{\mathrm{max}}(\V{R}) / \lambda_{\mathrm{min}}(\V{R})$ the 2-norm condition number of $\V{R}$.
\end{theorem}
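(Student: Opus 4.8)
The plan is to mirror the error analysis of \cite{cohen_stability_2013} reproduced in Theorem \ref{thm:ls-stability}, but to carry it out in the $L^2_{\widetilde{w}}$ geometry and then transfer back to $L^2_w$ through the spectral bounds on $\V{R}$. Since truncation is $1$-Lipschitz toward the interval $[-L,L]$ containing the range of $f$, it can only reduce pointwise error, so $\|f-\widetilde{f}\|_w\le\|f-\widetilde{\Pi}^S f\|_w$ and it suffices to bound $\E\|f-\widetilde{\Pi}^S f\|_w^2$. I would split this expectation over the ``good'' event $\Omega$ on which $\vertiii{\V{R}^{-1/2}\V{G}\V{R}^{-1/2}-\V{I}}\le\delta$ and its complement, taking $\delta=\tfrac12$. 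The proof of Theorem \ref{thm:cls-bounded-stability} shows $\Pr[\Omega^c]\le 2/S^r$ under \eqref{eq:christoffel-sample-requirement}; on $\Omega^c$ both $f$ and $\widetilde{f}=T_L(\cdot)$ are bounded by $L$, so $\|f-\widetilde{f}\|_w^2\le 4L^2$ (recall $w$ is a probability density), and this event contributes at most $8L^2/S^r$, the third term in \eqref{eq:bounded-convergence}.

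On $\Omega$ the Gramian $\V{G}$ is invertible, so $\widetilde{\Pi}^S$ is a genuine projection onto $P$ and reproduces it. I would first apply the $L^2_w$-Pythagoras identity, using $f-\Pi f\perp_w P$ and $\Pi f-\widetilde{\Pi}^S f\in P$, to peel off the optimal-error term with coefficient one: $\|f-\widetilde{\Pi}^S f\|_w^2=\|f-\Pi f\|_w^2+\|\Pi f-\widetilde{\Pi}^S f\|_w^2$. The reproduction property $\widetilde{\Pi}^S(\Pi f)=\Pi f$ then gives $\Pi f-\widetilde{\Pi}^S f=-\widetilde{\Pi}^S h$ with $h=(I-\Pi)f$, so the remaining work is to estimate $\E[\mathbbm{1}_\Omega\|\widetilde{\Pi}^S h\|_w^2]$.

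The central tool is the norm equivalence on $P$ read off from $\V{R}$: for $p\in P$ with $w$-orthonormal coordinates $\V{a}$ one has $\|p\|_w^2=\|\V{a}\|_2^2$ and $\|p\|_{\widetilde{w}}^2=\V{a}^T\V{R}\V{a}$, hence $\lambda_{\min}(\V{R})\|p\|_w^2\le\|p\|_{\widetilde{w}}^2\le\lambda_{\max}(\V{R})\|p\|_w^2$. Using this I pass to the $\widetilde{w}$-norm at the cost of a factor $1/\lambda_{\min}(\V{R})$ and decompose $h=\widetilde{\Pi}h+h_\perp$, where $h_\perp=(I-\widetilde{\Pi})h$ is $\widetilde{w}$-orthogonal to $P$ and the deterministic part satisfies $\widetilde{\Pi}h=\widetilde{\Pi}f-\Pi f$ (since $\widetilde{\Pi}$ fixes $\Pi f\in P$), whose $w$-norm is exactly the discrepancy $d(f)$ of \eqref{eq:data-discrepancy}. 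Because $\widetilde{\Pi}^S$ fixes $P$, $\widetilde{\Pi}^S h=\widetilde{\Pi}h+\widetilde{\Pi}^S h_\perp$. The random summand $\widetilde{\Pi}^S h_\perp$ has coordinate vector $\V{G}^{-1}\V{b}$ with $(\V{b})_\alpha=\langle\phi_\alpha,h_\perp\rangle_S$, and since $h_\perp\perp_{\widetilde{w}}P$ these entries are mean-zero. On $\Omega$ one has $\vertiii{\V{R}^{1/2}\V{G}^{-1}\V{R}^{1/2}}\le(1-\delta)^{-1}$, and the variance estimate hinges on the defining Christoffel identity $k_s K(z_s)=N$ from \eqref{eq:christoffel-weights}: summing over $\alpha$ collapses $\sum_\alpha k^2\phi_\alpha^2 K$ to $N^2/K$, and the relation $\widetilde{w}=\tfrac{N}{K}v_D$ turns the resulting integral into $N\|h_\perp\|_{\widetilde{w}}^2$, so $\E\|\V{b}\|_2^2\le\tfrac{N}{S}\|h_\perp\|_{\widetilde{w}}^2$. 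Feeding in the sample criterion \eqref{eq:christoffel-sample-requirement} (with $\delta=\tfrac12$, for which $4c_\delta=2-2\log2$) yields $\E[\mathbbm{1}_\Omega\|\widetilde{\Pi}^S h_\perp\|_{\widetilde{w}}^2]\le\varepsilon(S)\|h_\perp\|_{\widetilde{w}}^2\le\varepsilon(S)\|f-\Pi f\|_{\widetilde{w}}^2$, the variance term.

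The final step is to recombine the deterministic bias $\widetilde{\Pi}h$ (controlled by $\lambda_{\max}(\V{R})\,d^2(f)$ after passing to the $\widetilde{w}$-norm) with the mean-zero variance contribution inside $\|\widetilde{\Pi}^S h\|_{\widetilde{w}}^2$, and then to divide by $\lambda_{\min}(\V{R})$. I expect this bookkeeping to be the main obstacle: the cross term between the deterministic and random pieces must be absorbed without degrading the variance coefficient, and it is exactly this absorption, together with the two norm conversions $w\to\widetilde{w}\to w$ applied to the bias, that produces the $\kappa^2(\V{R})$ factor multiplying $d^2(f)$. Everything else, namely the Pythagoras split, the matrix-Chernoff stability input imported from Theorem \ref{thm:cls-bounded-stability}, and the mean/variance estimates, is routine once the Christoffel identity is invoked; the genuinely delicate points are establishing the identity $\widetilde{\Pi}h=\widetilde{\Pi}f-\Pi f$ and the constant-tracking that converts the raw triangle and Young estimates into the stated coefficients $\varepsilon(S)/\lambda_{\min}(\V{R})$ and $4\kappa^2(\V{R})$.
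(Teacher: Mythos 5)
Your architecture matches the paper's almost step for step — the truncation/bad-event handling yielding $8L^2/S^r$, the $L^2_w$ Pythagoras split peeling off $\|f-\Pi f\|_w^2$ with coefficient one, the reduction to bounding $\E\bigl[\mathbbm{1}_\Omega\|\widetilde{\Pi}^S h\|_w^2\bigr]$ for $h=(I-\Pi)f$, the identity $\widetilde{\Pi}h=\widetilde{\Pi}f-\Pi f$ so that $\|\widetilde{\Pi}h\|_w=d(f)$, and the Christoffel-identity variance estimate $\E\|\V{b}\|^2\le \frac{N}{S}\|h_\perp\|_{\widetilde{w}}^2$ are all correct. The genuine gap is precisely the step you flagged and deferred: recombining the deterministic piece $\widetilde{\Pi}h$ with the random piece $\widetilde{\Pi}^S h_\perp$. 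The cross term $\E\bigl[\mathbbm{1}_\Omega\langle\widetilde{\Pi}h,\widetilde{\Pi}^S h_\perp\rangle\bigr]$ does not vanish — $\V{G}^{-1}$ is correlated with $\V{b}$, and the indicator $\mathbbm{1}_\Omega$ breaks the mean-zero property of $\V{b}$ — so it must be absorbed by a weighted Young inequality, and any such absorption multiplies the variance term by a factor $1+\eta$, $\eta>0$. Since your variance estimate already equals exactly $\varepsilon(S)\|h_\perp\|_{\widetilde{w}}^2$ (after feeding in \eqref{eq:christoffel-sample-requirement} with $\delta=\tfrac12$), every completion of your route produces a coefficient strictly larger than $\varepsilon(S)/\lambda_{\mathrm{min}}(\V{R})$ on the $\|f-\Pi f\|_{\widetilde{w}}^2$ term (e.g.\ $2\varepsilon(S)/\lambda_{\mathrm{min}}(\V{R})$ with the symmetric choice), so the inequality \eqref{eq:bounded-convergence} as stated is unreachable this way. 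Relatedly, your prediction of where $\kappa^2(\V{R})$ comes from is off: in your decomposition the bias $\widetilde{\Pi}h\in P$ is reproduced exactly by $\widetilde{\Pi}^S$ and never passes through $\V{G}^{-1}$, so norm conversions give it at most a factor of order $\kappa(\V{R})$, not $\kappa^2(\V{R})$. (Your bound is thus incomparable to, not identical with, the stated one: sharper bias constant, inflated variance constant.)

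The paper's proof avoids the cross term by never splitting $h$ at the continuous level. Write the coefficient vector of $\widetilde{\Pi}^S h$ as $\V{b}=\V{G}^{-1}\V{y}$ with $\V{y}=\frac{1}{S}\V{V}^T\V{K}\V{h}$, apply $\vertiii{\V{G}^{-1}}\le 2/\lambda_{\mathrm{min}}(\V{R})$ to the \emph{whole} vector, $\|\V{b}\|^2\le \frac{4}{\lambda_{\mathrm{min}}^2(\V{R})}\|\V{y}\|^2$, and then compute $\E\|\V{y}\|^2$ \emph{exactly}: since $\V{y}$ is an average of iid rank-one contributions,
\begin{align*}
  \E\|\V{y}\|^2 \;=\; \frac{1}{S}\,\E\!\left[\frac{N^2}{K(z_1)}\,h^2(z_1)\right] + \frac{S-1}{S}\bigl\|\E\V{y}\bigr\|^2 \;=\; \frac{N}{S}\,\|h\|_{\widetilde{w}}^2 + \frac{S-1}{S}\,\|\V{R}\V{d}\|^2,
\end{align*}
where $\V{d}$ is the coefficient vector of $\widetilde{\Pi}h$, so $\|\V{R}\V{d}\|\le\lambda_{\mathrm{max}}(\V{R})\,d(f)$. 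This is a bias--variance identity for a second moment, not an inequality, so no cross term ever appears. The sampling condition converts $\frac{4}{\lambda_{\mathrm{min}}^2}\cdot\frac{N}{S}\|h\|_{\widetilde{w}}^2$ into exactly $\frac{\varepsilon(S)}{\lambda_{\mathrm{min}}(\V{R})}\|h\|_{\widetilde{w}}^2$, and the bias part becomes $\frac{4}{\lambda_{\mathrm{min}}^2}\lambda_{\mathrm{max}}^2\,d^2(f)=4\kappa^2(\V{R})\,d^2(f)$ — the $\kappa^2$ arises because the bias \emph{is} hit by $\vertiii{\V{G}^{-1}}^2$ and $\vertiii{\V{R}}^2$ in this arrangement. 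If you reorganize your final step this way, the remainder of your write-up goes through essentially verbatim.
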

\begin{proof}
  Our proof follows that of Theorem 2 in \cite{cohen_stability_2013}. Under the sampling condition \eqref{eq:christoffel-sample-requirement} with $\delta = \frac{1}{2}$, we have the following inequality with probability at least $1 - \frac{2}{S^r}$:
\begin{align}\label{eq:Ginv-bound}
  \lambda_{\mathrm{max}}\left(\V{G}^{-1}\right) \leq \lambda_{\max} \left(\V{R}^{-1/2}\right) \lambda_{\mathrm{max}} \left( \V{R}^{1/2} \V{G}^{-1} \V{R}^{1/2}\right) \lambda_{\mathrm{max}} \left(\V{R}^{-1/2}\right) \leq \frac{2}{\lambda_{\mathrm{min}} \left(\V{R}\right)}
\end{align}
We denote the probabilistic set under which this happens as $\Omega_+$, and $\Omega_-$ the set under which this fails. Then
\begin{align*}
  \E \left[ \|f - \widetilde{f}\|_w^2 \right] \leq \E \left[ \| f - \widetilde{\Pi}^S f \|_w^2\, \big| \, \Omega_+\right] + \frac{8 L^2}{S^r}
\end{align*}
where we have used the fact that $\|T_L[f]\|_w \leq \|f\|_w$, and $T_L[f] = f$ if $|f| \leq L$. Let $g = f - \Pi f$. We note that since $g$ is $L^2_w$-orthogonal to $\Pi f$, and $\widetilde{\Pi}^S$ is the identity on $P$, then
\begin{align*}
  f - \widetilde{\Pi}^S f = g - \widetilde{\Pi}^S g,
\end{align*}
so that
\begin{align*}
  \left\|g - \widetilde{\Pi}^S g \right\|_w^2 = \left\|g\right\|_w^2 + \left\| \widetilde{\Pi}^S g\right\|_w^2 = \left\|g\right\|^2_w + \|\V{b}\|^2,
\end{align*}
where the coefficients $b_j$ in the vector $\V{b}$ are the polynomial coefficients recovered from the CLS approximation on the function $g$. Thus, we have
\begin{subequations}
\begin{align}\label{eq:thm-1-1}
  \E \left[ \|f - \widetilde{f}\|_w^2 \right] \leq \|g\|_w^2 + \frac{8 L^2}{S^r} + \E \left[ \|\V{b}\|^2 \, \big|\, \Omega_+ \right]
\end{align}
We have \eqref{eq:Ginv-bound}, and using the normal equations \eqref{eq:normal-equations} on the event $\Omega_+$ \rev{(in \eqref{eq:normal-equations}, replace $\V{f} \gets \V{g}$ and $\V{c} \gets \V{b}$)}, we have
\begin{align}\label{eq:thm-1-2}
  \|\V{b}\|^2 \leq \frac{4}{S^2 \lambda_{\mathrm{min}}^2(\V{R})} \| \V{V}^T \V{K} \V{g}\|^2 = \frac{4}{\lambda_{\mathrm{min}}^2(\V{R})} \sum_{n=1}^N \left\langle \phi_n(z), \frac{N}{K(z)} g(z) \right\rangle_S^2
\end{align}
where $\langle \cdot, \cdot \rangle_S$ is defined in \eqref{eq:discrete-inner-product}.
Letting $z_s$ denote random iid variables distributed according to the sampling density $v$, then each summand on the right-hand side above has expected value given by
\begin{align*}
  \E \left( \frac{1}{S} \sum_{s=1}^S \frac{N}{K(z_s)} \phi_n(z_s) g(z_s)\right)^2 &= \frac{1}{S^2} \sum_{s,r=1}^S \E \frac{N^2}{K(z_s) K(z_r)} \phi_n(z_s) \phi_n(z_r) g(z_s) g(z_r) \\ 
                                                                                      &= \frac{S}{S^2} \E \frac{N^2}{K^2(z_1)} \phi^2_n(z_1) g^2(z_1) + \frac{S(S-1)}{S^2} \left[ \E \frac{N}{K(z_1)} \phi_n(z_1) g(z_1)\right]^2
\end{align*}
Summing over $n$, we have
\begin{align*}
  \E \|\V{V}^T \V{K} \V{g} \|^2 &= \frac{1}{S} \E \frac{N^2}{K^2(z_1)} K(z_1) g^2(z_1) + \frac{S-1}{S} \sum_{n=1}^N \left[\E \frac{N}{K(z_1)} \phi_n(z_1) g(z_1)\right]^2 \\
                                &= \underbrace{\frac{N}{S}}_{\mathrm{(a)}} \underbrace{\E \frac{N}{K(z_1)} g^2(z_1)}_{\mathrm{(b)}} + \frac{S-1}{S} \underbrace{\sum_{n=1}^N \left[ \int_D \phi_n(z) g(z) \widetilde{w} \dx{z}\right]^2}_{\mathrm{(c)}}
\end{align*}
Term (a) on the right-hand side can be bounded by using the condition \eqref{eq:christoffel-sample-requirement}, so that $\mathrm{(a)} = \frac{N}{S} \leq \frac{\lambda_{\mathrm{min}}(\V{R}) c_\delta}{(1 + r) \log S}$. Term (b) is equal to $\|g\|^2_{\widetilde{w}}$. To bound (c), consider the continuous projection $\widetilde{\Pi} g = \sum_{n=1}^N d_n \phi_n$, with the $d_n$ solving the expected value of the normal equations \eqref{eq:normal-equations}:
\begin{align*}
  \V{R} \V{d} &= \V{h}, & (h)_n &= \int_D g(z) \phi_n(z) \widetilde{w}(z) \dx{z}
\end{align*}
We have
\begin{align*}
  \mathrm{(c)} = \sum_{n=1}^N \left[ \int_D \phi_n(z) g(z) \widetilde{w} \dx{z}\right]^2 = \|\V{h}\|^2 \leq \vertiii{\V{R}}^2 \, \|\V{d}\|^2 \leq \lambda_{\mathrm{max}}^2(\V{R}) \|\widetilde{\Pi} g\|^2_w
\end{align*}
Thus we have
\begin{align}\label{eq:thm-1-3}
  \E \|\V{V}^T \V{K} \V{g} \|^2 \leq \lambda_{\mathrm{min}}(\V{R}) \frac{c_\delta}{(1+r) \log \delta} \|g\|^2_{\widetilde{w}} + \lambda_{\mathrm{\max}}^2(\V{R}) \|\widetilde{\Pi} g\|^2_w
\end{align}
\end{subequations}
Combining \eqref{eq:thm-1-1}, \eqref{eq:thm-1-2}, and \eqref{eq:thm-1-3}, we have 
\begin{align*}
  \E \left[ \|f - \widetilde{f}\|_w^2 \right] \leq \|g\|_w^2 + \frac{4 c_\delta}{(1+r) \lambda_{\mathrm{min}}(\V{R}) \log S} \|g\|_{\widetilde{w}}^2 + \frac{8 L^2}{S^r} + 4 \kappa^2(\V{R}) \|\widetilde{\Pi} g\|_w^2,
\end{align*}
\rev{
  where the appropriate value of $c_\delta$ from \eqref{eq:cohen-sample-size} for $\delta = \frac{1}{2}$ should be used. Noting that the term $\left\| \widetilde{\Pi} g\right\|_w^2$ is precisely $d^2(f)$ proves the result.
}
\end{proof}
The above result is suboptimal in the sense that as $S \rightarrow \infty$, the error converges to a value that deviates from the optimal value of $\|g\|^2_w$. The suboptimal term involving $d^2(f)$ is a term that stems from the discrepancy between $w$ and $\widetilde{w}$. Indeed, $d(f) = \widetilde{\Pi} (I - \Pi) f$, which vanishes when $w$ and $\widetilde{w}$ coincide (or when $f$ is a polynomial in $P$ regardless of $\widetilde{w}$). A second apparently deviation of the CLS result above from the standard Monte Carlo estimate \eqref{eq:cohen-convergence} is that the second term on the right-hand side of \eqref{eq:bounded-convergence} is proportional to $\|f - \Pi f\|_{\widetilde{w}}$ rather than $\|f - \Pi f\|_w$. 

\rev{
\subsubsection{Size of the CLS discrepancy terms}\label{sec:cls-accuracy}
In this section we give empirical evidence to suggest that the standard Monte Carlo error estimate form Theorem \ref{thm:ls-stability} and the CLS error estimate derived in Theorem \ref{thm:bounded-convergence} give comparable bounds. The different error terms in the standard Monte Carlo convergence result \eqref{eq:cohen-convergence} and the CLS convergence result \eqref{eq:bounded-convergence} are, respectively,
\begin{align*}
  \varepsilon(S) \|g\|_w^2 \hskip 10pt \textrm{versus} \hskip 10pt \frac{\varepsilon(S)}{\lambda_{\textrm{min}}(\V{R})} \|g\|_{\widetilde{w}}^2 + 4 \kappa^2(\V{R}) d^2(f).
\end{align*}
Above, $g = f - \Pi f$ is the truncation error between $f$ and its $L^2_w$ projection onto the polynomial space $P$. To investigate how suboptimal the above terms from CLS algorithm are, we separate dependence on the sample count $S$ by assuming that $\varepsilon(S) = 1$. In this case, a measure of suboptimality of the CLS theory is given by the ratio of the above terms:
\begin{align}\label{eq:Delta-def}
  \Delta(f) \triangleq \frac{1}{\lambda_{\textrm{min}}(\V{R})} \frac{\|g\|_{\widetilde{w}}^2}{\|g\|^2_w} + 4 \kappa^2(\V{R}) \frac{d^2(f)}{\|g\|^2_w},
\end{align}
with $\Delta \approx 1$ indicating that the CLS and standard Monte Carlo error terms are of roughly the same magnitude. Values satisfying $\Delta \gg 1$ indicate that the CLS convergence terms are quite suboptimal. We consider the case $D = [-1,1]$ with the weight $w = 1$, so that the $\phi_n$ basis elements are orthonormal Legendre polynomials. We choose four test functions $f^{(q)}$ for $q = 0, 1, 2, 3$, where $q$ is an indicator of the smoothness of these functions:
\begin{align}\nonumber
  f^{(0)}(z) &= \left\{ \begin{array}{rl}
                        1, & z \leq \frac{1}{2} \\
                        -1, & z > \frac{1}{2}
                     \end{array}
                     \right. \\\label{eq:fq}
  f^{(q+1)}(z) &= \int_{-1}^z f^{(q)}(x) \dx{x}
\end{align}
These functions have $q$ derivatives in $L^2_w$ and have $\mathcal{O}(1)$ values on $[-1,1]$. In Figure \ref{fig:cls-accuracy-discrepancy} we show values of $\Delta$ for these four test functions, and see that $\Delta \approx 1$ in most scenarios, with $\Delta < 2$ in all cases tested. This suggest that the CLS error bound provided by \eqref{eq:bounded-convergence} is, in practice, as sharp as that provided by \eqref{eq:cohen-convergence}, at least for our choice of $w$ and $\widetilde{w}$. Similar tests in $d=1$ dimension yield similar results. 
\begin{figure}
\begin{center}
  \includegraphics[width=\textwidth]{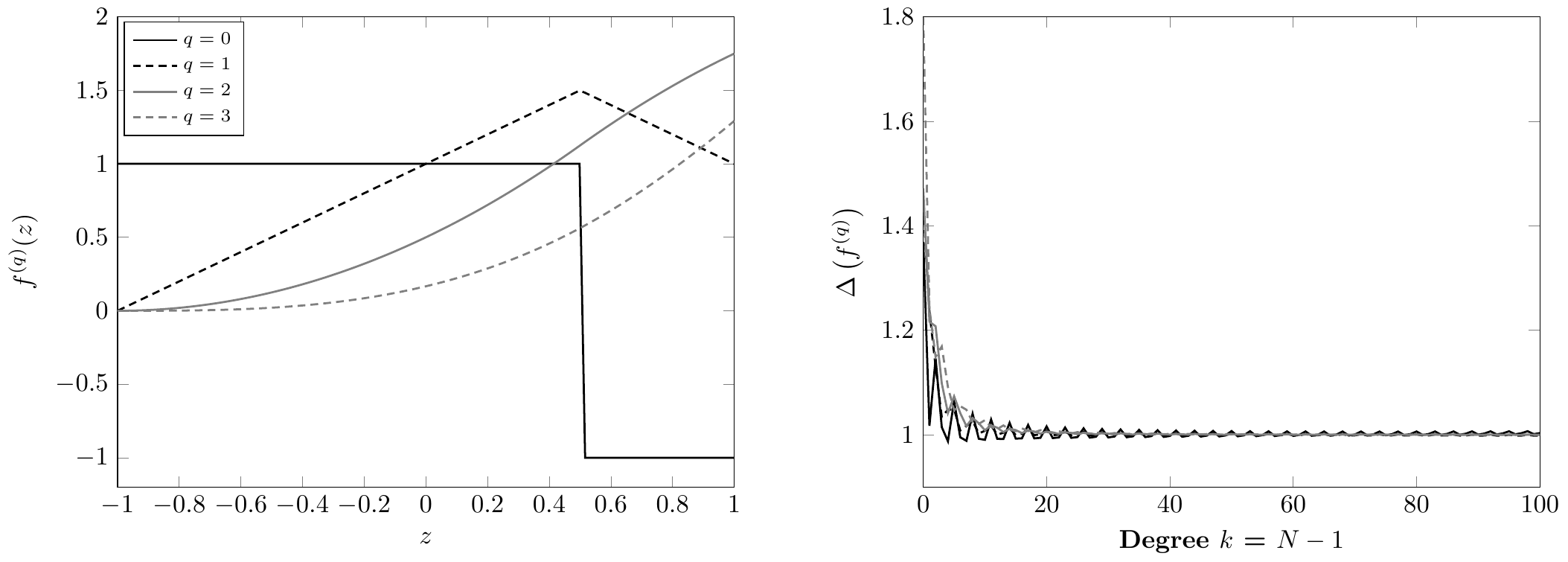}
\end{center}
\caption{\rev{Left: 4 test functions defined by \eqref{eq:fq} whose smoothness increases with $q$. Right: Size of the theoretical CLS bound \eqref{eq:bounded-convergence} relative to the size of the standard Monte Carlo bound \eqref{eq:cohen-convergence}, measured by the parameter $\Delta$ defined in \eqref{eq:Delta-def}. Values $\Delta \approx 1$ indicate that the CLS bound is comparable to the standard Monte Carlo bound.}}\label{fig:cls-accuracy-discrepancy}
\end{figure}
Thus, while the CLS error bound in Theorem \ref{thm:bounded-convergence} initially appears weaker than the standard Monte Carlo estimate in \ref{thm:ls-stability}, our limited testing indicates that they are comparable. We close this section by noting that the first term in \eqref{eq:Delta-def} involving $\|g\|_{\widetilde{w}}/\|g\|_w$ is the major contributer to $\Delta$, having value approximately 1 in our tests. This is consistent with expectations since $\widetilde{w} \approx w$. Therefore, the second term involving $d^2(f)$ appeared to have comparatively little contribution to the value of $\Delta$.
}

\subsection{Unbounded domains}\label{sec:cls-proof-unbounded}

We assume the pair $(D,w)$ is unbounded-admissible in the sense of Section \ref{sec:setup}. Our results are essentially identical to the bounded domain case, but some of the definitions change. The CLS method in this case is given by Algorithm \ref{alg:cls-unbounded}, with the sampling density $v$ given by \eqref{eq:cls-unbounded-weight}.

The Christoffel-weighted functions $\psi_\alpha$ are identical to the bounded case given by \eqref{eq:psi-def}. The surrogate weight $\widetilde{w}$ is defined as 
\begin{align*}
  \widetilde{w}(z) = v(z) \frac{N}{K(z)} = k^{-d/t} \frac{N}{K(z)} v_{D,Q}\left(k^{-1/t} z\right)
\end{align*}
where $k = \max_{\alpha \in \Lambda} |\alpha|$. We let $A \triangleq \mathrm{supp}\, \mu_{D,Q}$ be the support of the equilibium measure, and the analogous definition of \eqref{eq:qn-def} in the unbounded case is 
\begin{align}\label{eq:qn-def-unbounded}
  (R)_{\alpha,\beta} &= k^{-d/t} \int_{ k^{1/t} A} \psi_\alpha(z) \psi_\beta(z) \dx{\mu_{D,Q}(k^{-1/t} z)} = \int_{k^{1/t} A} \phi_\alpha(z) \phi_\beta(z) \widetilde{w}(z) \dx{z}.
\end{align}

Before continuing, we need a result that relates polynomials orthonormal under $w^k$ to those orthonormal under $w$, assuming $w = \exp(-2 Q)$ with $Q$ a homogeneous function.
\begin{lemma}\label{lemma:orthogonality-expanding}
  Let a weight function $w = \exp(-2 Q)$ with $Q$ satisfying \eqref{eq:q-homogeneous} be given with homoegeneity exponent $t$ on an unbounded conic domain $D$. Let $\phi_\alpha$ be a polynomial family that is $L^2$ orthonormal under weight $w$. Then a family of polynomials $\phi^{(k)}_\alpha$ that is orthonormal under $w^{k}$ is 
  \begin{align*}
    \phi^{(k)}_\alpha(z) = k^{d/2t} \phi_\alpha\left(k^{1/t} z\right).
  \end{align*}
\end{lemma}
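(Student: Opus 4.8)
The plan is to verify directly that the proposed family $\phi^{(k)}_\alpha(z) = k^{d/2t}\phi_\alpha(k^{1/t}z)$ satisfies the $w^k$-orthonormality relation \eqref{eq:phi-varying-weight}. First I would note that each $\phi^{(k)}_\alpha$ is genuinely a polynomial of degree $|\alpha|$: it is obtained from the degree-$|\alpha|$ polynomial $\phi_\alpha$ by the invertible linear rescaling $z \mapsto k^{1/t} z$, so the graded structure $\deg \phi^{(k)}_\alpha = |\alpha|$ assumed throughout is preserved, and (up to sign) this produces the unique $w^k$-orthonormal family.

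The core of the argument is a change of variables $y = k^{1/t} z$ in the target inner product. Substituting the formula for $\phi^{(k)}_\alpha$ gives
\begin{align*}
  \int_D \phi^{(k)}_\alpha(z)\,\phi^{(k)}_\beta(z)\, w^k(z)\dx{z} = k^{d/t} \int_D \phi_\alpha(k^{1/t} z)\,\phi_\beta(k^{1/t} z)\, w^k(z)\dx{z}.
\end{align*}
Under $y = k^{1/t} z$ the Jacobian contributes $\dx{z} = k^{-d/t}\dx{y}$, and because $D$ is an origin-centered cone the region of integration is invariant, $k^{1/t} D = D$. The squared normalization prefactor $k^{d/t}$ then cancels exactly against the Jacobian factor $k^{-d/t}$.

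The key step, and the only place where the homogeneity hypothesis \eqref{eq:q-homogeneous} enters, is evaluating the weight after the substitution. Writing $w = \exp(-2Q)$, I would compute $w^k(k^{-1/t} y) = \exp\bigl(-2k\, Q(k^{-1/t} y)\bigr)$ and invoke the $t$-homogeneity $Q(k^{-1/t} y) = (k^{-1/t})^t Q(y) = k^{-1} Q(y)$ to collapse the varying power: the explicit factor of $k$ cancels, so $w^k(k^{-1/t} y) = \exp(-2 Q(y)) = w(y)$. After these cancellations the integral reduces precisely to the $w$-orthonormality relation $\int_D \phi_\alpha(y)\phi_\beta(y)\,w(y)\dx{y} = \delta_{\alpha,\beta}$, which completes the verification.

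There is no substantial obstacle here; the statement is essentially a bookkeeping identity in which the three scaling exponents are arranged to conspire. The only points that require genuine care are confirming that the conic structure of $D$ leaves the integration region fixed under the rescaling, and checking that the constant $k^{d/2t}$ is exactly the value needed to balance the Jacobian so that the $L^2$ normalization transfers cleanly from $w$ to $w^k$.
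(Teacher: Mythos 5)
Your proof is correct and follows essentially the same route as the paper: the paper substitutes $z \gets k^{1/t} z$ into the $w$-orthonormality relation, using the homogeneity identity $w^k(z) = w\left(k^{1/t} z\right)$ and the scale invariance of the conic domain, while you run the identical change of variables in the reverse direction, starting from the $w^k$ inner product of the rescaled family and reducing it to the $w$-orthonormality relation. One small caveat: your side remark that this produces \emph{the} unique $w^k$-orthonormal family up to sign holds only in $d=1$; in several variables orthonormal polynomial families are unique only up to orthogonal transformations acting within each degree block, though this is immaterial to the lemma, which asks only to exhibit one such family.
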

\begin{proof}
  Since $\phi_\alpha$ is orthonormal under $w$, then
  \begin{align*}
    \int_{D} \phi_\alpha(z) \phi_\beta(z) w(z) \dx{z} = \delta_{\alpha,\beta}.
  \end{align*}
  By assumption \eqref{eq:q-homogeneous}, $w^k(z) = w\left( k^{1/t} z\right)$. Then making the substitution $z \gets k^{1/t} z$ in the relation above yields
  \begin{align*}
    k^{d/t} \int_{D} \phi_\alpha(k^{1/t} z) \phi_\beta(k^{1/t}) w^k(z) \dx{z} = \delta_{\alpha,\beta}
  \end{align*}
  Since $\phi_\alpha\left(C z\right)$ is still a polynomial of degree $\alpha$ for any constant $C$, this proves the result.
\end{proof}

Again, the matrix $\V{R}_k$ corresponding to the total-degree polynomial space $P_k$ is reasonably well-behaved with respect to the identity, as can be seen from the right-hand pane of Figure \ref{fig:qn-error}, and owing to the result \eqref{eq:unbounded-christoffel-convergence} from Corollary \ref{cor:christoffel-convergence}, individual entries of $\V{R}_k$ converge to the Kronecker delta.
\begin{proposition}
  Fix multi-indices $\alpha$ and $\beta$. 
  Then the entries of $\V{R}_k$ in \eqref{eq:qn-def-unbounded} obey
\begin{align*}
    \lim_{\revision{k}\rightarrow \infty} (R_k)_{\alpha,\beta} = \delta_{\alpha,\beta}
\end{align*}
\end{proposition}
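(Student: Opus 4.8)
The plan is to reduce the integral \eqref{eq:qn-def-unbounded} to one over the fixed compact support $A = \mathrm{supp}\,\mu_{D,Q}$ and then invoke the Christoffel asymptotics of Corollary \ref{cor:christoffel-convergence}, exactly as in the bounded Proposition but with an intervening change of variables. First I would substitute $z = k^{1/t} y$ in \eqref{eq:qn-def-unbounded}, which maps $k^{1/t}A$ back to $A$ and contributes a Jacobian factor $k^{d/t}$. Using Lemma \ref{lemma:orthogonality-expanding} to write $\phi_\alpha(k^{1/t} y) = k^{-d/2t}\phi^{(k)}_\alpha(y)$, together with the resulting kernel identity $K_k(k^{1/t} y) = k^{-d/t} K^{(k)}_k(y)$, all powers of $k$ cancel and one is left with
\begin{align*}
  (R_k)_{\alpha,\beta} = \int_A \phi^{(k)}_\alpha(y)\,\phi^{(k)}_\beta(y)\,\frac{N}{K^{(k)}_k(y)}\,v_{D,Q}(y)\,\dx{y}.
\end{align*}

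Next I would recognize the weight $\frac{N}{K^{(k)}_k(y)} v_{D,Q}(y)$ as a surrogate for $\rho^{2k}(y) = w^k(y)$: by the convergence \eqref{eq:unbounded-christoffel-convergence}, $\tfrac1N \rho^{2k} K^{(k)}_k \to v_{D,Q}$, so $\frac{N}{K^{(k)}_k(y)} v_{D,Q}(y)$ and $\rho^{2k}(y)$ agree to leading order on $A$. Replacing one by the other reduces $(R_k)_{\alpha,\beta}$ to $\int_A \phi^{(k)}_\alpha \phi^{(k)}_\beta \rho^{2k} \dx{y}$. Since the $\phi^{(k)}_\alpha$ are orthonormal on all of $D$ under $\rho^{2k}$ by \eqref{eq:phi-varying-weight}, this integral equals $\delta_{\alpha,\beta}$ minus the tail contribution over $D \setminus A$, and the argument would conclude by showing this tail vanishes as $k \to \infty$.

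The main obstacle, and the genuine difference from the bounded Proposition, is that the ``test function'' $\phi^{(k)}_\alpha \phi^{(k)}_\beta$ now depends on $k$, so one cannot merely pair a fixed continuous function against a weakly convergent sequence of weights. Two estimates are therefore needed. First, the substitution of $\rho^{2k}$ for $\frac{N}{K^{(k)}_k} v_{D,Q}$ must be justified in integrated rather than pointwise form, since \eqref{eq:unbounded-christoffel-convergence} is an a.e.\ (weak) statement; here I would exploit that the measures $|\phi^{(k)}_\alpha \phi^{(k)}_\beta| \rho^{2k}\,\dx{y}$ have uniformly bounded mass, as Cauchy--Schwarz together with the $\rho^{2k}$-orthonormality of the $\phi^{(k)}_\alpha$ bounds this mass by $1$, so that the relative-error factor tending to $1$ can be passed through the integral. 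Second, one must show the escaped mass $\int_{D\setminus A}\phi^{(k)}_\alpha \phi^{(k)}_\beta \rho^{2k}\,\dx{y} \to 0$, which is precisely the concentration of weighted polynomials on the support of the weighted equilibrium measure, a standard feature of weighted potential theory. I expect this tail estimate to be the technically delicate step, whereas the algebraic reduction via the change of variables and Lemma \ref{lemma:orthogonality-expanding} is routine.
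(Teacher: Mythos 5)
Your algebraic reduction is correct, and it is in fact the \emph{same} identity the paper derives, just written in the contracted variable: substituting $z = k^{1/t}y$ in \eqref{eq:qn-def-unbounded} and invoking Lemma \ref{lemma:orthogonality-expanding} together with $K_k(k^{1/t}y) = k^{-d/t}K^{(k)}_k(y)$ gives exactly your expression $(R_k)_{\alpha,\beta} = \int_A \phi^{(k)}_\alpha(y)\, \phi^{(k)}_\beta(y)\, \frac{N}{K^{(k)}_k(y)}\, v_{D,Q}(y)\, \dx{y}$. The genuine gap is in the limiting argument that follows. You propose to replace $\frac{N}{K^{(k)}_k}v_{D,Q}$ by $\rho^{2k}$ on the grounds that the ratio $g_k \triangleq \frac{N v_{D,Q}}{K^{(k)}_k \rho^{2k}}$ tends to $1$ weakly (by \eqref{eq:unbounded-christoffel-convergence}) while the measures $\left|\phi^{(k)}_\alpha\phi^{(k)}_\beta\right|\rho^{2k}\dx{y}$ have mass at most $1$. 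But bounded mass plus weak convergence of $g_k$ does \emph{not} let you pass $g_k$ through the integral: the error is $\int_A (g_k - 1)\,\phi^{(k)}_\alpha\phi^{(k)}_\beta\,\rho^{2k}\,\dx{y}$, and since the measures themselves vary with $k$, they can concentrate exactly where $g_k$ deviates from $1$ (schematically: $g_k = 1 + \sin(ky)$ converges weakly to $1$, yet pairing it with a unit point mass placed where $\sin(ky)=1$ yields $2$, not $1$). What would close your argument is \emph{uniform} convergence $g_k \to 1$ on $A$, since then $\left|\int_A (g_k-1)\,\phi^{(k)}_\alpha\phi^{(k)}_\beta\rho^{2k}\dx{y}\right| \leq \sup_A|g_k-1|$; but Theorem \ref{thm:christoffel-convergence} supplies only weak convergence, so this step is not available from the results the paper quotes. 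Your second required estimate, the tail $\int_{D\setminus A}\phi^{(k)}_\alpha\phi^{(k)}_\beta\rho^{2k}\dx{y} \to 0$, is likewise additional machinery (concentration of weighted polynomials on $\mathrm{supp}\,\mu_{D,Q}$) that the paper's proof never needs.

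Both difficulties disappear---and this is how the paper proceeds---if you push the $k$-dependence into the weight rather than into the test functions. Using the homogeneity identity $w^k(k^{-1/t}z) = w(z)$, the very same integral can be written in the expanded variable as $\int_D \phi_\alpha(z)\phi_\beta(z) w(z) \left[ \frac{N v_{D,Q}(k^{-1/t}z)}{K^{(k)}_k(k^{-1/t}z)\, w^k(k^{-1/t}z)} \mathbbm{1}_{k^{1/t}A}(z)\right] \dx{z}$, where now the test function $\phi_\alpha\phi_\beta w$ is \emph{fixed} and $k$-independent, and the bracketed factor (including the indicator, i.e., your tail term) converges weakly to $1$. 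Testing a weakly convergent sequence against a fixed function is precisely the legitimate use of weak convergence, so Corollary \ref{cor:christoffel-convergence} applies directly. Your own remark---that one cannot pair $k$-varying test functions with a weakly convergent weight---correctly diagnoses the obstruction, but the remedy is not to supply the two hard estimates you outline; it is to reorganize the integrand so that the obstruction never arises.
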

\begin{proof}
  Since $\rho = \sqrt{w}$ is negative-log-homogeneous of degree $t$, then $\rho^{n}(z) = \rho(z n^{1/t})$, or in other words, $w^k\left(k^{-1/t} z\right) = w(z)$.
  Let $\phi_\alpha^{(k)}$ be the polynomial family orthogonal under $w^k$. By Lemma \ref{lemma:orthogonality-expanding}, we have
  \begin{align*}
    k^{-d/t} {K}^{(k)}_k\left(k^{-1/t}z\right) = k^{-d/t} \sum_{|\alpha| \leq k} \left[\phi^{(k)}_\alpha\left(k^{-1/t} z\right)\right]^2 = k^{-d/t} \sum_{|\alpha| \leq k} \left[ k^{d/2t} \phi_\alpha\left(z\right) \right]^2 = K_k(z)
  \end{align*}
  With $\rho^{2k}(z) = w^k(z)$, we can use \eqref{eq:unbounded-christoffel-convergence} to conclude: 
  \begin{align*}
    \frac{N v_{D,Q}( k^{-1/t} z)}{{K}^{(k)}_k\left(k^{-1/t} z\right) w^k \left( k^{-1/t} z\right)}  \rightarrow 1
  \end{align*}
  weakly on compact sets.
  Now from \eqref{eq:qn-def-unbounded} the entries of $\V{R}_k$ are given by 
  \begin{align*}
    (R_k)_{\alpha,\beta} &= k^{-d/t} \int_{ k^{1/t} A} \phi_\alpha(z) \phi_\beta(z) \frac{N}{K_k(z)} v_{D,Q}\left(k^{-1/t} z\right) \dx{z} \\
                         &= k^{-d/t} \int_{ k^{1/t} A} \phi_\alpha(z) \phi_\beta(z) \frac{N v_{D,Q}\left(k^{-1/t} z\right)}{k^{-d/t} {K}^{(k)}_k\left(k^{-1/t} z\right)} \dx{z} \\
                         &= \int_{ k^{1/t} A} \phi_\alpha(z) \phi_\beta(z) w(z) \frac{N v_{D,Q}\left( k^{-1/t} z\right)}{{K}^{(k)}_k\left(k^{-1/t} z\right) w^k\left( k^{-1/t} z\right)} \dx{z} \\
                         &= \int_{D} \phi_\alpha(z) \phi_\beta(z) w(z) \left[ \frac{N v_{D,Q}\left( k^{-1/t} z\right)}{{K}^{(k)}_k\left(k^{-1/t} z\right) w^k\left( k^{-1/t} z\right)} \mathbbm{1}_{k^{1/t}A}(z) \right]\dx{z}
  \end{align*}
  Since the term in brackets weakly converges to 1 on any compact set, we have
  \begin{align*}
    (R_k)_{\alpha,\beta} \rightarrow \int_D \phi_\alpha \phi_\beta w\, \dx{z} = \delta_{\alpha,\beta}.
  \end{align*}
\end{proof}

It is clear that the results for the bounded case in Theorem \ref{thm:cls-bounded-stability} may be extended to the unbounded case.
\begin{theorem}\label{thm:unbounded-stability}
  For $D$ a conic unbounded domain with weight decomposition $w = \rho^2$ satisfying \eqref{eq:q-homogeneous}, define $\V{R}_k$ through \eqref{eq:qn-def-unbounded}. With CLS Algorithm \ref{alg:cls-unbounded} operating under the condition \eqref{eq:christoffel-sample-requirement}, then
  \begin{align*}
    \mathrm{Pr} \left[ \frac{\vertiii{\V{G} - \V{R}_k}}{\vertiii{\V{R}_k}} \geq \delta\right] \leq \frac{2}{S^{r}},
  \end{align*}
  for any $\delta \in (0,1)$ and $r > 0$.
\end{theorem}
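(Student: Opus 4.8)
The plan is to reproduce the bounded-domain argument of Theorem \ref{thm:cls-bounded-stability} essentially verbatim, since the matrix-concentration machinery is insensitive to whether $D$ is bounded or unbounded; all that changes is the sampling density $v$ and the region over which expectations are computed. First I would note that $\V{R}_k$, defined by \eqref{eq:qn-def-unbounded}, is symmetric positive-definite, so $\V{R}_k^{1/2}$ and $\V{R}_k^{-1/2}$ exist, and I would reduce the claim to a bound on $\mathrm{Pr}[\vertiii{\V{R}_k^{-1/2}\V{G}\V{R}_k^{-1/2} - \V{I}} > \delta]$ via the submultiplicative estimate $\vertiii{\V{G}-\V{R}_k} \leq \vertiii{\V{R}_k}\,\vertiii{\V{R}_k^{-1/2}\V{G}\V{R}_k^{-1/2}-\V{I}}$, exactly as in the bounded proof.

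Next I would decompose $\V{G} = \sum_{s=1}^S \V{Y}_s$ into independent rank-one summands with $(\V{Y}_s)_{\alpha,\beta} = \tfrac{N}{S}\phi_\alpha(z_s)\phi_\beta(z_s)/K(z_s)$, where now the $z_s$ are iid from the scaled weighted equilibrium density $v$ of \eqref{eq:cls-unbounded-weight}. Two structural facts must be verified, and they are precisely the crux of why the argument transfers. First, the Christoffel weighting forces the deterministic identity $\vertiii{\V{Y}_s} = \tfrac{N}{S}\sum_\alpha \phi_\alpha^2(z_s)/K(z_s) = \tfrac{N}{S}$ almost surely, since $\V{Y}_s$ is the outer product of the vector with entries $\sqrt{N/(S K(z_s))}\,\phi_\alpha(z_s)$ with itself; this is exactly why the weights $k_s = N/K(z_s)$ were chosen, and it holds regardless of the sampling measure, hence is unchanged in the unbounded setting. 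Second, I would compute $\E\V{Y}_s = \tfrac1S\V{R}_k$ directly: because $z_s \sim v$, the $(\alpha,\beta)$ entry of $\E\V{Y}_s$ is $\tfrac1S\int \phi_\alpha\phi_\beta \tfrac{N}{K}v\,\dx{z}$, which is exactly $\tfrac1S(R_k)_{\alpha,\beta}$ by the definition \eqref{eq:qn-def-unbounded} of $\V{R}_k$ as a $\widetilde{w}$-weighted Gramian. Setting $\V{X}_s = \V{R}_k^{-1/2}\V{Y}_s\V{R}_k^{-1/2}$ then gives $\sum_s\E\V{X}_s = \V{I}$ and the uniform bound $\vertiii{\V{X}_s} \leq N/(S\lambda_{\mathrm{min}}(\V{R}_k))$.

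With these two facts the remainder is identical to the bounded case: applying the matrix Chernoff bound of \cite{tropp_user-friendly_2012} to $\{\V{X}_s\}$ with $M = N/(S\lambda_{\mathrm{min}}(\V{R}_k))$ yields a tail of the form $2S\exp(-c_\delta S\lambda_{\mathrm{min}}(\V{R}_k)/N)$, and invoking the sample-count condition \eqref{eq:christoffel-sample-requirement} collapses this to $2/S^r$. The one point requiring care — really a consistency check rather than a genuine difficulty — is confirming that the support of the scaled sampling density $v$ coincides with the region $k^{1/t}A$ appearing in \eqref{eq:qn-def-unbounded}, so that the expectation $\E\V{Y}_s$ integrates against the correct measure and reproduces $\tfrac1S\V{R}_k$. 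Once the sampling measure and the Gramian are defined consistently through the same scaled equilibrium density, no unboundedness-specific estimate is needed: the deterministic norm bound $\vertiii{\V{Y}_s}=N/S$ and the identity $\sum_s\E\V{X}_s = \V{I}$ carry the entire argument, and the concentration step is measure-agnostic.
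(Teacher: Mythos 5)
Your proposal is correct and is exactly the argument the paper intends: the paper itself proves Theorem \ref{thm:unbounded-stability} only by remarking that the bounded-case proof of Theorem \ref{thm:cls-bounded-stability} extends directly, and your write-up supplies precisely the two facts that make this extension work (the measure-agnostic identity $\vertiii{\V{Y}_s}=N/S$ and the consistency $\E\V{G}=\V{R}_k$ between the scaled sampling density $v$ of \eqref{eq:cls-unbounded-weight} and the Gramian definition \eqref{eq:qn-def-unbounded}). No gap; if anything, your version is more explicit than the paper's.
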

This theorem is the unbounded analogue of Theorem \ref{thm:cls-bounded-stability}. As before, the minimum eigenvalue of $\V{R}_k$ will play a role in determining the sample count requirement through \eqref{eq:christoffel-sample-requirement}. 

A convergence result for the unbounded case that mirrors Theorem \ref{thm:bounded-convergence} may likewise be proven using the same method.
\begin{theorem}\label{thm:unbounded-convergence}
  Let $f \in L^2_w$ be given. If the number of samples $S$ in the CLS-unbounded algorithm satisfies \eqref{eq:christoffel-sample-requirement}, then the mean-square $L^2_w$ error of the truncated CLS approximation $\widetilde{f} = T_L \widetilde{\Pi}^S f$ satisfies
  \rev{
  \begin{align}
    \E \left[ \|f - \widetilde{f}\|_w^2 \right] &\leq \|f - \Pi f\|_w^2  + \frac{\varepsilon(S)}{\lambda_{\mathrm{min}}(\V{R})} \|f - \Pi f\|^2_{\widetilde{w}} + \frac{8 L^2}{S^r} + 4\kappa^2(\V{R}) d^2(f)
  \end{align}
}
with $\varepsilon(S) \triangleq \frac{2 - 2 \log 2}{(1+r) \log S} \rightarrow 0$ as $S \rightarrow \infty$, $\kappa(\V{R}) = \lambda_{\mathrm{max}}(\V{R}) / \lambda_{\mathrm{min}}(\V{R})$ the 2-norm condition number of $\V{R}$, \rev{and $d(f)$ as in \eqref{eq:data-discrepancy}}.
\end{theorem}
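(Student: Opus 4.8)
The plan is to mirror the bounded-domain proof of Theorem~\ref{thm:bounded-convergence} verbatim, since the entire argument there is algebraic in the matrices $\V{G}$, $\V{R}$, $\V{V}$, and $\V{K}$ and never actually uses compactness of $D$ directly. The structural facts it relies on are: (i) the stability estimate for $\V{G}$ relative to $\V{R}$, which for the unbounded case is supplied by Theorem~\ref{thm:unbounded-stability}; (ii) the change-of-measure identity that makes $\V{R}$ the $L^2_{\widetilde{w}}$ Gramian of the $\phi_\alpha$; and (iii) the representation of the discrete projection via the normal equations \eqref{eq:normal-equations}. All three carry over once we replace the bounded definition \eqref{eq:qn-def} of $\V{R}$ with its unbounded counterpart \eqref{eq:qn-def-unbounded}, and replace the bounded sampling density $v_D$ with the scaled weighted equilibrium density $v$ from \eqref{eq:cls-unbounded-weight}.

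Concretely, I would proceed as follows. First I would invoke Theorem~\ref{thm:unbounded-stability} with $\delta = \tfrac12$ to obtain, on a high-probability event $\Omega_+$ of probability at least $1 - 2/S^r$, the bound $\lambda_{\mathrm{max}}(\V{G}^{-1}) \leq 2/\lambda_{\mathrm{min}}(\V{R})$ exactly as in \eqref{eq:Ginv-bound}. Next I would use the truncation inequality $\|T_L[f]\|_w \leq \|f\|_w$ together with $T_L[f]=f$ when $|f|\leq L$, and the error decomposition $f - \widetilde{\Pi}^S f = g - \widetilde{\Pi}^S g$ with $g = f - \Pi f$ (valid because $\widetilde{\Pi}^S$ is the identity on $P$ and $g \perp_w P$), to reduce the mean-square error to the three terms in \eqref{eq:thm-1-1}: $\|g\|_w^2$, the failure-probability contribution $8L^2/S^r$, and the coefficient-norm term $\E[\|\V{b}\|^2 \mid \Omega_+]$.

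The core of the argument is then bounding $\E[\|\V{b}\|^2 \mid \Omega_+]$ via the normal equations, yielding the splitting into terms (a), (b), (c) of the bounded proof. I would compute $\E\|\V{V}^T\V{K}\V{g}\|^2$ using the independence of the samples $z_s$ drawn from the unbounded sampling density $v$, obtaining the same diagonal-plus-off-diagonal decomposition; the sampling criterion \eqref{eq:christoffel-sample-requirement} controls term (a) as $N/S \leq \lambda_{\mathrm{min}}(\V{R})c_\delta / ((1+r)\log S)$, term (b) equals $\|g\|_{\widetilde{w}}^2$ by definition of $\widetilde{w}$, and term (c) is bounded by $\lambda_{\mathrm{max}}^2(\V{R})\|\widetilde{\Pi}g\|_w^2$ through the continuous normal equations $\V{R}\V{d}=\V{h}$. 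Identifying $\|\widetilde{\Pi}g\|_w^2 = d^2(f)$ via \eqref{eq:data-discrepancy} then assembles the stated bound. The one place requiring genuine care—and the step I expect to be the main obstacle—is verifying that the change-of-measure and expectation computations remain valid when the support has been rescaled by $k^{1/t}$ and $v$ is the degree-dependent density in \eqref{eq:cls-unbounded-weight}; in particular one must confirm that $\E\,[N/K(z_1)]\,g^2(z_1) = \|g\|_{\widetilde{w}}^2$ and that the Christoffel weights $k_s = N/K(z_s)$ interact correctly with the rescaled equilibrium density, so that $\V{R}$ as defined in \eqref{eq:qn-def-unbounded} is exactly the matrix appearing in $\sum_s \E\V{X}_s = \V{I}$. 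Since these identities are precisely the content built into the unbounded definitions of $\widetilde{w}$ and $\V{R}_k$ and were already used to prove Theorem~\ref{thm:unbounded-stability}, the remaining calculations are routine substitutions, and the proof concludes as in the bounded case.
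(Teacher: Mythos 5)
Your proposal is correct and is exactly what the paper intends: the paper gives no separate proof of Theorem~\ref{thm:unbounded-convergence}, stating only that it ``may likewise be proven using the same method'' as Theorem~\ref{thm:bounded-convergence}, and your argument is precisely that bounded-case proof carried over with $\V{R}$ from \eqref{eq:qn-def-unbounded}, the scaled sampling density from \eqref{eq:cls-unbounded-weight}, and Theorem~\ref{thm:unbounded-stability} supplying the event $\Omega_+$. Your identification of the one step needing care---that $\widetilde{w} = v\,N/K$ makes the expectation identities and $\sum_s \E \V{X}_s = \V{I}$ hold verbatim for the rescaled, compactly supported density---is the right observation and closes the argument.
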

\rev{Just as with the bounded case, this result is influenced by $w$ versus $\widetilde{w}$ discrepancy terms; the empirical observations in Section \ref{sec:cls-accuracy} regarding the size of these additional terms holds in this case as well.}

\section{Examples}\label{sec:results}
In the following section we investigate the stability and convergence properties of the CLS algorithm. The method we compare against will be a standard unweighted Monte Carlo method, Algorithm \ref{alg:unweighted-mc}. We are interested primarily in investigating how linear and log-linear sampling rates of $S$ versus the approximation space dimension $N$ affect stable and accurate reconstruction. In our figures and results, we will use ``MC" to denote an unweighted Monte Carlo procedure (i.e., as specific in Algorithm \ref{alg:unweighted-mc}), and the notation ``CLS" to denote the result of the Christoffel Least Squares algorithm (i.e., either Algorithm \ref{alg:cls-bounded} or \ref{alg:cls-unbounded}).

The sampling strategies we use are from Table \ref{tab:cls-sampling}. Note that for unbounded domains, our sampling strategies are only \textit{conjectures} because explicit formulae for weighted equilibrium measures in these cases are currently unknown.

\subsubsection*{Sampling from the ``Hermite" distribution on $\R^d$}
For the ``Hermite" case with density $w = \exp(-z^2)$, the following is one way to sample from $v_{D,Q}$ shown in Table \ref{tab:cls-sampling}: let $W \in \R^d$ be a $d$-variate standard normal random variable. The random variable $\frac{W}{\|W\|_2}$ is uniformly distributed on the surface of the unit ball $\partial B^d$. \revision{Thus, we need only find an appropriate random variable $R$ whose distribution matches the marginal distribution of $\|z\|$. Since marginalizing a spherically symmetric density on the unit ball to the radial coordinate introduces an $r^{d-1}$ factor, then the marginal density for $R$ has the form
  \begin{align*}
    \rho_R(r) &= C r^{d-1} (2 - r^2)^{d/2}, & 0 &\leq r \leq \sqrt{2},
  \end{align*}
  where $C$ is a normalization constant. However, with the change of variables $R^2 \gets P$, we see that $\frac{1}{\sqrt{2}} P$ has Beta distribution with parameters $\alpha = \frac{d}{2}$ and $\beta = \frac{d}{2}+1$. Therefore, the following prescription generates samples $Z$ according to the conjectured equilibrium measure:
  \begin{enumerate}
    \item Generate a $d$-variate standard normal random variable $W$
    \item Generate a Beta$\left(\frac{d}{2}, \frac{d}{2}+1\right)$ random variable $P$
    \item Set $Z = \sqrt{2 P} \frac{W}{\|W\|_2}$
  \end{enumerate}
} 

\subsubsection*{Sampling from the ``Laguerre" distribution on $[0, \infty)^d$}
  For the ``Laguerre" case with density $w = \exp(-\sum_j z_j)$, we need to sample from the appropriate density $v_{D,Q}$ in Table \ref{tab:cls-sampling}. \revision{However, we note that the form 
    \begin{align*}
      \dfdx{\mu_{D,Q}}{y} = C \left(4 - \sum_{j=1}^d y_j\right)^{d/2} \prod_{j=1}^d \left(y_j\right)^{-1/2} 
    \end{align*}
    is the density for a $(d+1)$-dimensional Dirichlet distribution on the variables  $\left(y_1, \ldots, y_d, 4-\|y\|_{\ell^1}\right)$ with the $d+1$ parameters $\left(\frac{1}{2}, \frac{1}{2}, \ldots, \frac{1}{2}, \frac{d}{2}+1\right)$. Therefore the following prescription generates samples $Z$ according to the conjectured equilibrium measure:
  \begin{enumerate}
    \item Generate a $(d+1)$-variate Dirichlet random variable $W$ with parameters $\left(\frac{1}{2}, \frac{1}{2}, \ldots, \frac{1}{2}, \frac{d}{2}+1\right)$.
    \item Truncate the last ($(d+1)$'th) entry of $W$
    \item Set $Z = 4 W$.
  \end{enumerate}
}

\subsection{Matrix stability}
In this section we investigate the condition number 
$
\kappa (\mathbf{\sqrt{K} V})=\frac{\sigma_{\mathrm{max}}(\mathbf{\sqrt{K} V})}{\sigma_{\mathrm{min}}(\mathbf{\sqrt{K} V})}
$ 
of the weighted design matrix $\V{V}$ from both the CLS and the unweighted MC methods, where $\sigma_{\mathrm{max}}$ and $\sigma_{\mathrm{min}}$ are the maximum and minimum singular values of a matrix, respectively. Because the design matrices for both algorithms are random matrices, we report the \textit{mean} condition number over a size-100 ensemble of tests.

\begin{figure}[t]
\begin{center}
  \includegraphics[width=\textwidth]{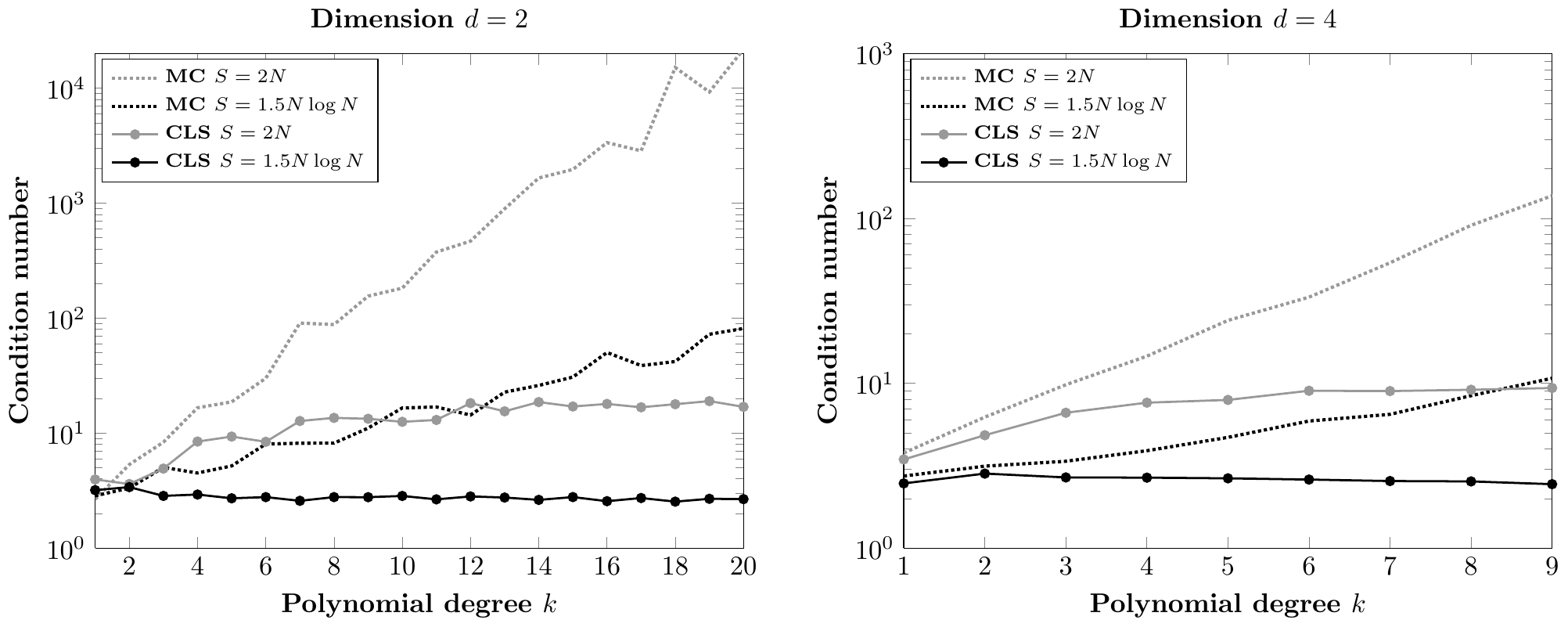}
\end{center}
\caption{Condition number with respect to the polynomial degree $k$ in the 2-dimensional (left) and 4-dimensional (right) total degree polynomial spaces (uniform measure). 
Sampling is shown for rates $S=2N$ and $S=1.5N\log \!N.$}\label{fig:TD2_Condition_Legendre}
\end{figure}
\subsubsection{Bounded domains}
We first consider the uniform distribution where Legendre polynomials are used. In Fig.~\ref{fig:TD2_Condition_Legendre} we show the condition number with respect to the polynomial degree $k$ for total degree spaces $\Lambda_k$. The left plot shows two-dimensional results while the right plot shows four-dimensional results. Both plots show results for linear scaling of sample count, i.e. $S = 2N,$ and for \textit{log-linear} dependence $S = 1.5N \log N.$ The CLS algorithm is much more stable compared to the standard MC method. Moreover, the log-linear scaling admits decay properties of the condition number with respect to the polynomial order $k$ with the CLS sampling strategy. In contrast, the linear rule admits a growth of the condition number with respect to the polynomial order $k$, for both the two kinds of design points. 
\begin{figure}[ht]
\begin{center}
\includegraphics[width=\textwidth]{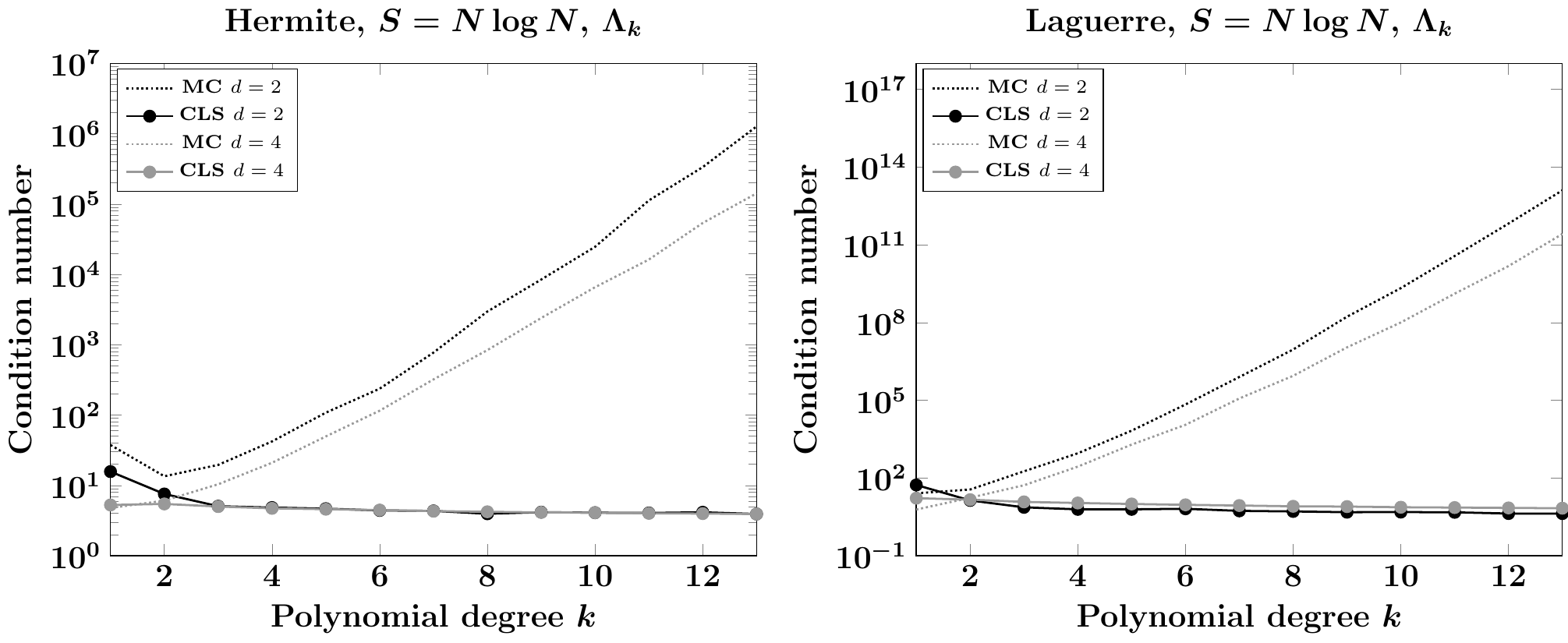}
\end{center}
\caption{\revision{Condition number against polynomial degree $n$ for the total degree polynomial space. Left: Gaussian density (Hermite polynomials). Right: Exponential density (Laguerre polynomials).}}\label{fig:Condition_Hermite}
\end{figure}

\subsubsection{Unbounded domains}
For stability on unbounded domains, we will consider the Gaussian density function $\exp(-\sum z_j^2)$ corresponding to Hermite polynomials, 
and an exponential density function $\exp(-\sum z_j)$ corresponding to Laguerre polynomials. In Fig.~\ref{fig:Condition_Hermite}, we report the condition number of the 
design matrix with respect to the polynomial degree $k$ in both the 2-dimensional total degree space and the 4-dimensional total degree space with log-linear scaling $S=N \log N$. 
The left-hand figure show the Hermite results, and the right-hand figure shows the Laguerre results. 
Again, our approach works much better, but we see that the increased dimensionality of the problem makes the CLS algorithm more ill-conditioned in the Gaussian case. 


In Fig.~\ref{fig:Condition_dimension}, we test how the dimension $d$ affect the condition number for the CLS algorithm. In the left plot, we report the numerical condition number 
for Legendre approach with $S=N\log\!N$ for $d=2,4,6.$ The dimension has little effect on the condition number, and the approach remains stable with the same dependence. 
In the right plot, we provide results for Laguerre polynomials. 
For this unbounded case, the dimension parameter $d$ appears to affect stability only weakly, just as with the bounded (Legendre) case.

\begin{figure}[ht]
\begin{center}
  \includegraphics[width=\textwidth]{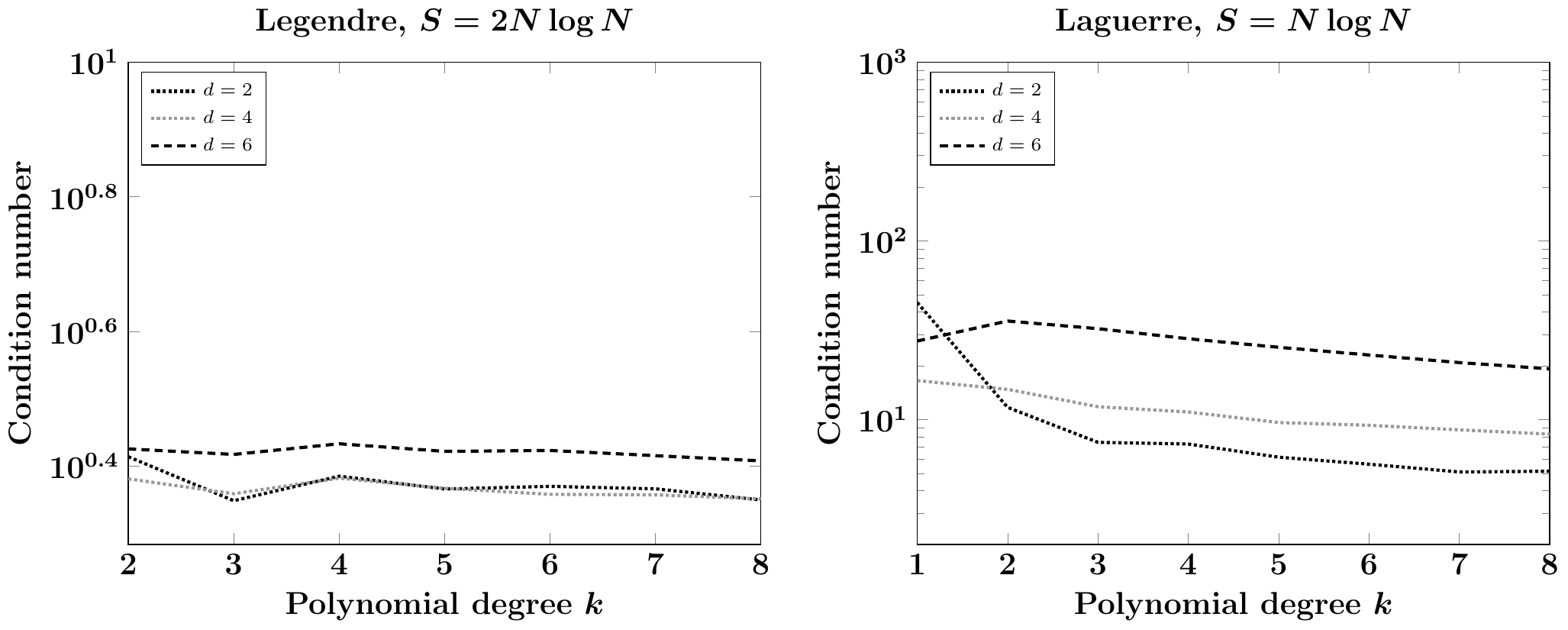}
\end{center}
\caption{\revision{CLS Condition numbers against polynomial degree $k$ for different dimensions. Left: Uniform density (Legendre polynomials), $S=2N\log\!N$. Right: Exponential density (Laguerre polynomials), $S=N\log\!N.$}}\label{fig:Condition_dimension}
\end{figure}

\subsubsection{$\ell^p$ polynomial spaces}
Up until this point we have only provided numerical examples using total-degree polynomial spaces
$\Lambda_k=\{\phi_\alpha:|\alpha|\le k \}$. In the following we will consider the effect of using polynomial spaces whose indices are defined by $\ell^p$ contour lines (with $p < 1$) on the stability of the design matrix. We define the polynomial space of strength $p>0$ as $\Lambda_k^p=\{\phi_\alpha:\lVert\alpha\rVert_p\le k \}$, where $\|\cdot\|_p$ is the discrete $\ell^p$ norm, and setting $p=1$ reverts to a total-degree space.

Figure \ref{fig:cond-vs-polynomial-space} plots the condition number of the design matrices against polynomial degree $n$ for $10$-dimensional total-degree and $\ell^p$ ($p=2/5$) Laguerre polynomial spaces. In 10 dimensions the CLS algorithm produces larger condition numbers than MC for a given total-degree space for low polynomial degree. This is in contrast to the lower dimensional results shown in Figure \ref{fig:Condition_dimension}. However CLS is again more stable than MC when we use the space $\Lambda_k^{2/5}$. The cardinality of these spaces grows much slower than the cardinality of the total-degree spaces. This slower growth allows us to provide numerical results that consider a much larger range of degrees which are computationally unfeasible using total-degree spaces. Since the benefit of the Christoffel function is asymptotic in the degree, we believe that the inclusion of terms that are high-degree in one variable and low-degree in the others (as is the case for these $\ell^p$ spaces) results in better performance of the CLS algorithm. We also note that the right-hand window of Figure \ref{fig:Condition_dimension} is the more practical case in high-dimensional approximation: using $\ell^p$-type index sets.

\begin{figure}[ht]
\begin{center}
  \includegraphics[width=\textwidth]{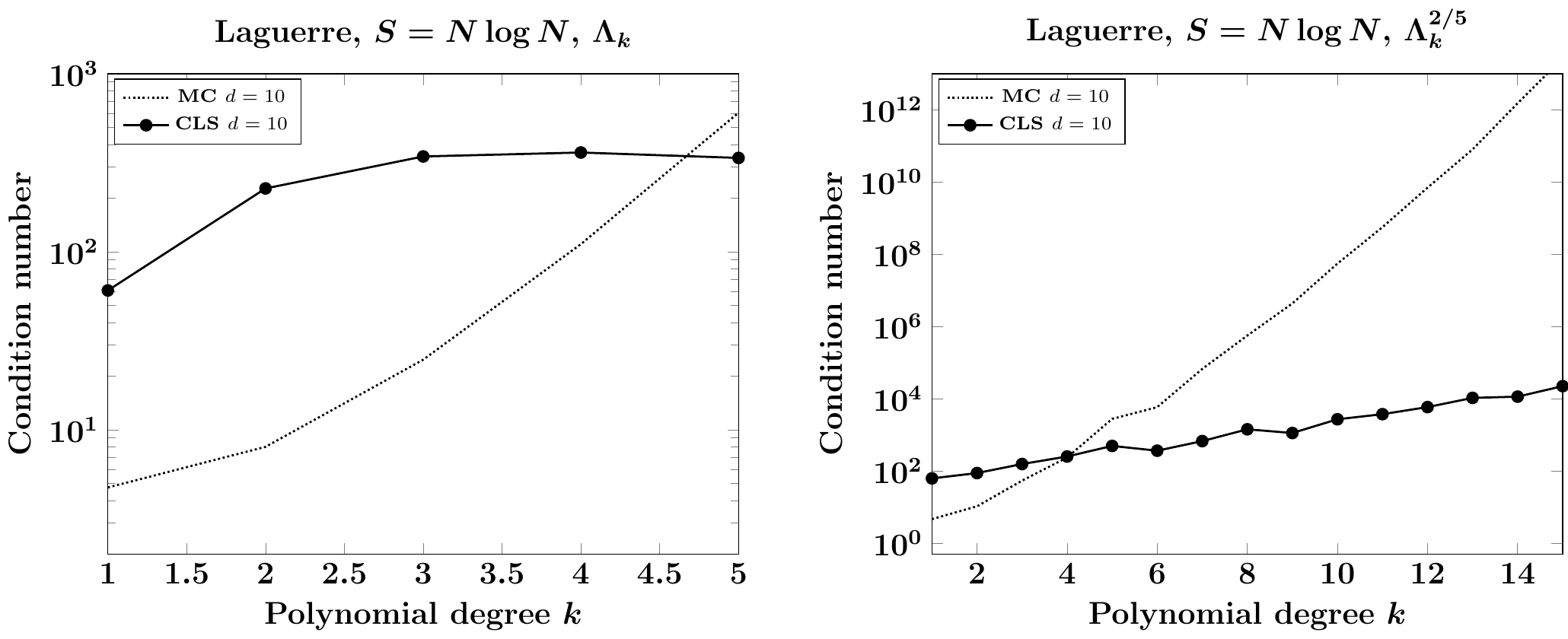}
\end{center}
\caption{\revision{Condition number against polynomial degree $k$ for total-degree $\Lambda_k$ (left) and $\ell^p$ polynomial space $\Lambda_k^{2/5}$ (right) 10-dimensional Laguerre polynomial spaces}}
\label{fig:cond-vs-polynomial-space}
\end{figure}

\subsection{Least-squares accuracy}

In this section we will compare the CLS and MC algorithms in terms of their ability to approximate a number of test functions. 
In all examples that follow we report the \textit{mean} condition number over a size-20 ensemble of tests.
\subsubsection{Algebraic function}
In Figure \ref{fig:algebraic-function-error} (left), we report the convergence rate of the least-squares projection for Legendre approximation in the 2-dimensional total degree space,
 for the test function $f(z)=\textmd{exp}\left(-\sum_{i=1}^d z_i^2\right).$ We measure accuracy using the discrete $\ell_2$ norm which is computed using $10,000$ random samples drawn from
the probability measure of orthogonality. The CLS algorithm is very stable and the error in the approximation can be driven to machine accuracy. In contrast the MC 
strategy becomes unstable as the polynomial degree is increased. Furthermore, MC sampling requires more samples to achieve a given error tolerance.
In Figure \ref{fig:algebraic-function-error} (right), we consider the Hermite approximation for the test function $f(z)=\textmd{exp}\left(-\sum_{i=1}^d z_i\right),$  
in the 3-dimensional total degree space. Again, our approach remains stable, while the MC sampling strategy becomes unstable as the polynomial degree is increased. However, in the case the CLS estimator has noticeably worse accuracy.

\begin{figure}[ht]
\begin{center}
  \includegraphics[width=\textwidth]{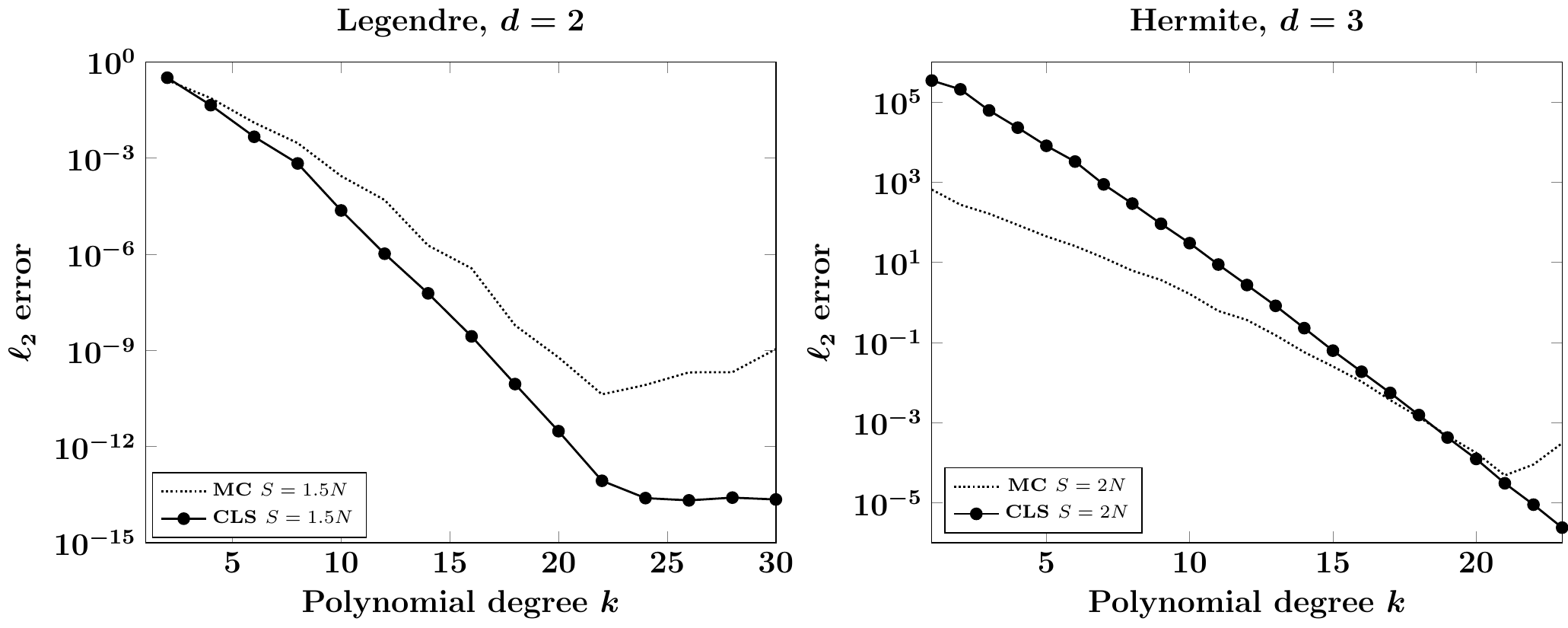}
\end{center}
\caption{\revision{Approximation error against polynomial degree $k.$ Left: Legendre approximation of $f(Y)=\textmd{exp}\left(-\sum_{i=1}^d Y_i^2\right).$ 
Right: Hermite approximation of $f(Y)=\textmd{exp}\left(-\sum_{i=1}^d Y_i\right).$}}\label{fig:algebraic-function-error}
\end{figure}

\subsubsection{Diffusion equation}
Consider the heterogeneous diffusion equation in one-spatial dimension 
\begin{equation}\label{eq:heterogeneous-diffusion}
-\frac{d}{dx}\left[a(x,\mathbf{z})\frac{du}{dx}(x,\mathbf{z})\right] = 1,\quad 
(x,\mathbf{z})\in(0,1)\times I_\mathbf{z},\quad u(0,\mathbf{z})=0,\quad u(1,\mathbf{z})=0.
\end{equation}
with an uncertain diffusivity coefficient that satisfies 
\begin{equation}\label{eq:diffusivityZ}
a(x,\mathbf{z})=\bar{a}+\sigma_a\sum_{k=1}^d\sqrt{\lambda_k}\phi_k(x)z_k,
\end{equation} 
where $\{\lambda_k\}_{k=1}^d$ and $\{\phi_k(x)\}_{k=1}^d$ are, respectively, 
the eigenvalues and eigenfunctions of the squared exponential covariance kernel 
$C_a(x_1,x_2) = \exp\left[-\frac{(x_1-x_2)^2}{l_c^2}\right].$
In the following we set $d=2$, $\bar{a}=1.0$, $\sigma=0.1$, $l_c=1$ and approximate the solution $u(1/3,\mathbf{z})$ when 
$\mathbf{z}=(z_1,z_2)$ are independent and normally-distributed random variables.
\footnote{We solve the model~\eqref{eq:heterogeneous-diffusion} using
quadratic finite elements with a high enough spatial resolution to neglect
discretization errors in our analysis.}

Figure \ref{fig:resistor-network-error} (left) compares the convergence accuracy of the Hermite polynomial least squares projection of the quantity of interest $u(1/3,\mathbf{z})$
using the CLS and MC algorithms. The accuracy of the approximation obtained using CLS is stable, whereas the MC based approximation becomes unstable as the polynomial degree is increased.
\begin{figure}[ht]
\begin{center}
\includegraphics[width=\textwidth]{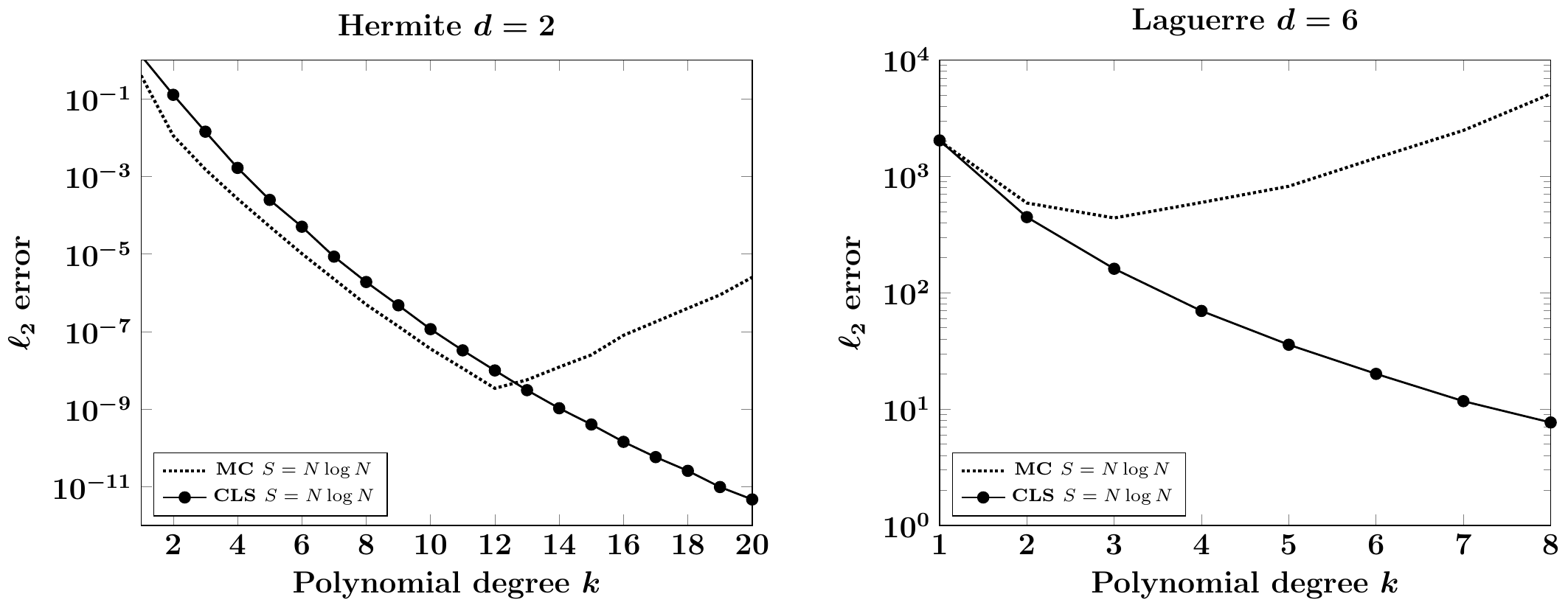}
\end{center}
\caption{\revision{Approximation error against polynomial degree $k$. Left: Hermite approximation of the $2$-dimensional diffusion equation. 
Right: Laguerre approximation of the $6$-dimensional resistor network.}}
\label{fig:resistor-network-error}
\end{figure}

\subsubsection{Resistor network}
\begin{figure}[t]
\includegraphics[width=0.95\textwidth]{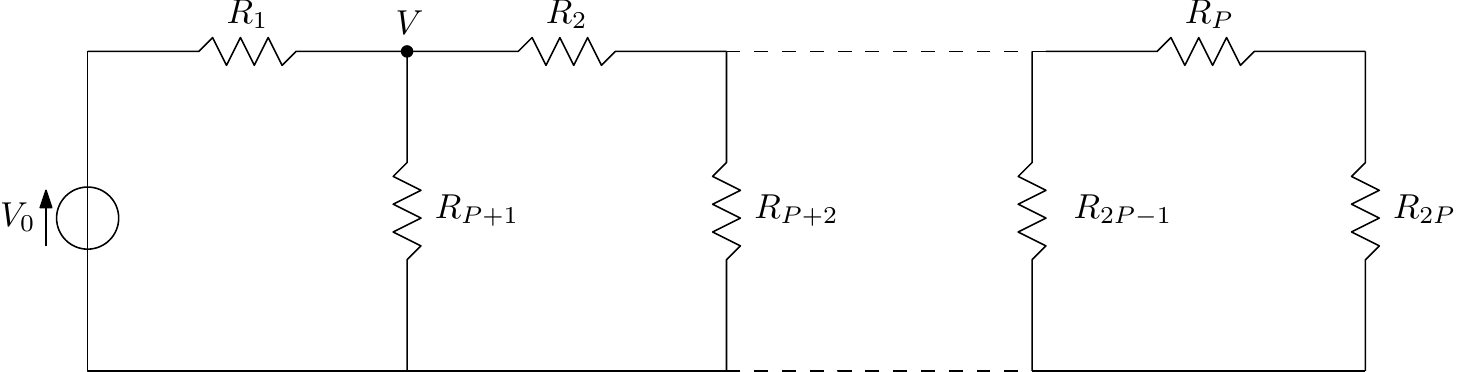}
\caption{Resistor network}\label{fig:resistor-network} 
\end{figure}

Consider the electrical resistor network shown in Fig.~\ref{fig:resistor-network}.  The network is comprised of $d=2P=6$ resistances $R_j$ of uncertain Ohmage and the network is driven by a voltage source providing a known potential $V_0=1$.  We are interested in using Laguerre polynomials to construct a least squares approximation of the voltage $V$ when the resistances are independent and identically distributed exponential random variables. As shown in all the previous examples the approximation obtained using CLS is stable for the ranges of degrees considered, whereas the MC based approximation becomes unstable as the polynomial degree is increased.

\section{Conclusion}\label{sec:conclusion}

Monte Carlo approximation for discrete least-squares polynomial approximation is an effective tool for approximating high-dimensional functions, and of great interest is the number of samples required for stability and convergence. We have shown that the Christoffel Least Squares algorithm can effectively approximate functions on bounded and unbounded multivariate domains, with very general multi-index sets that define the approximation space. Our theoretical results suggest that the CLS algorithm is optimal when the polynomial degree is large; our numerical results validate that the method is either superior to or competitive with standard Monte Carlo techniques in many situations of interest.

We expect it is possible to improve several of the statements about convergence using more precise estimates of Christoffel functions, which is the subject of ongoing work.

\section{Acknowledgments}
The authors express deep thanks to Dr. Norman Levenberg who provided much insight into weighted pluripotential theory.

This work was supported by the U.S. Department of Energy, Office of Science, Office of Advanced Scientific Computing Research, Applied Mathematics program. Sandia National Laboratories is a multi-program laboratory managed and operated by Sandia Corporation, a wholly owned subsidiary of Lockheed Martin Corporation, for the U.S. Department of Energy National Nuclear Security Administration under contract DE-AC04-94AL85000. 


\bibliographystyle{abbrv}
\bibliography{christoffel-regression}

\end{document}